\numberwithin{equation}{section}
\theoremstyle{plain}
\newtheorem{lemma}{Lemma}[section]
\newtheorem{thm}{Theorem}[section]
\newtheorem{defn}{Definition}[section]
\newtheorem{prop}{Proposition}[section]
\newtheorem{cor}{Corollary}[section]
\newtheorem{remark}{Remark}[section]
\newcommand{\lbar}[1]{\underline{#1}}
\newcommand{\ubar}[1]{\overline{#1}}
\newcommand{\bfpi}{\boldsymbol\pi}
\newcommand{\bftheta}{\boldsymbol\theta}
\newcommand{\bfvarpi}{\boldsymbol\varpi}
\newcommand{\bfvartheta}{\boldsymbol\vartheta}
\def\@setcopyright{}
\def\serieslogo@{}
\begin{document}

\title[]{Minimizing the Probability of Lifetime Ruin Under Ambiguity Aversion}\thanks{This research is supported by the National Science Foundation under grant  DMS-0955463.}

\author{Erhan Bayraktar}
\address[Erhan Bayraktar]{Department of Mathematics, University of Michigan, 530 Church Street, Ann Arbor, MI 48109, USA}
\email{erhan@umich.edu}
\author{Yuchong Zhang }
\address[Yuchong Zhang]{Department of Mathematics, University of Michigan, 530 Church Street, Ann Arbor, MI 48109, USA}
\email{yuchong@umich.edu}

\begin{abstract}
We determine the optimal robust investment strategy of an individual who targets at a given rate of consumption and seeks to minimize the probability of lifetime ruin when she does not have perfect confidence in the drift of the risky asset. Using stochastic control, we characterize the value function as the unique classical solution of an associated Hamilton-Jacobi-Bellman (HJB) equation, obtain feedback forms for the optimal investment and drift distortion, and discuss their dependence on various model parameters. In analyzing the HJB equation, we establish the existence and uniqueness of viscosity solution using Perron's method, and then upgrade regularity by working with an equivalent convex problem obtained via the Cole-Hopf transformation. We show the original value function may lose convexity for a class of parameters and the Isaacs condition may fail. Numerical examples are also included to illustrate our results.
\end{abstract}
\keywords{Probability of lifetime ruin, ambiguity aversion, drift uncertainty, viscosity solutions, Perron's method, regularity.}

\maketitle
\pagestyle{headings}

\section{Introduction}
The problem of how individuals should invest their wealth in a risky financial market to minimize the probability that they outlive their wealth, also known as the \textit{probability of lifetime ruin} (this terms was coined by  \cite{MilevskyRobinson00}), was analyzed by Young \cite{Young04}. We mention that Jacka in an earlier work \cite{Jacka02} considered a finite-fuel problem of very similar form.
Subsequent variants of Young's work include but not limited to adding borrowing constraints \cite{BayraktarYoung07b}, assuming consumption is ratcheted \cite{BayraktarYoung08}, allowing stochastic consumption \cite{BayraktarYoung11} and stochastic volatility \cite{Xueying11}. In all previous works, there is a fixed risky asset model; that is, the investor is certain about the evolution and distribution of the risky asset price. This is, however, not very realistic. There may be good estimates of the price volatility, but drift estimation, as Rogers points out in \cite[Section 4.2]{Rogers}, is almost impossible; it would require centuries of data to obtain a reliable estimate. Therefore, it is desirable to have a robust investment strategy that can perform well against drift misspecification. For a good introduction of robust decision making theory, see \cite{Hansen}.

Although drift estimation is difficult, one would still like to make use of the available data. A natural approach is to extract from the available data a reference model, and penalize other models based on their deviation from the reference model. How hard to penalize depends on how averse the agent is to \textit{ambiguity}, also called \textit{model uncertainty} or \textit{Knightian uncertainty}. Early works incorporating ambiguity aversion into optimization (e.g. \cite{Maenhout04}, \cite{Hansen06}) 
are mostly done via a formal analysis of the corresponding Hamilton-Jacobi-Bellman (HJB) equation.
Among those that provide more mathematical rigor, we mention a few that use different approaches. Jaimungal solves a finite horizon irreversible investment problem \cite{Jaimungal11}, and a hybrid model of default problem with Sigloch \cite{Jaimungal12} using stochastic control. They work with a scaled entropic penalty in order to get explicit solutions and rely on direct verification. Bordigoni et al. \cite{Bordigoni07} analyze a finite horizon utility maximization problem also by control method, but provide a backward stochastic differential equation (BSDE) characterization instead of an HJB characterization. Their results are generalized to an infinite horizon setting by Hu and Schweizer \cite{Hu11}. Schied \cite{Schied07} and Hern\'{a}ndez-Hern\'{a}ndez and Schied \cite{HSchied07} treat robust utility maximization problems using duality or a combination of duality and control.

In this paper, we provide a complete and rigorous analysis of the robust lifetime ruin problem 
\[\inf_\pi \sup_\mathbb{Q} \left\{\mathbb{Q}(\tau_b<\tau_d)-\frac{1}{\varepsilon}h^d(\mathbb{Q}|\mathbb{P})\right\}\]
using stochastic control, where $\tau_b$ and $\tau_d$ are the ruin time and death time, respectively, $h^d$ is a variant of the entropic penalty function which only measures entropy up to the death time, $\varepsilon$ specifies the penalization strength, $\pi$ runs through a set of investment strategies and $\mathbb{Q}$ runs through a set of possible models representing drift uncertainty.
When the hazard rate is zero, we obtain explicit formulas. In the general case, we characterize the value function as the unique classical solution of an associated HJB equation satisfying two boundary conditions, and give feedback forms for the optimal investment and drift distortion. In contrast to the non-robust case or robust utility maximization problem, we show that the value function loses convexity for a class of parameters, which suggests that the Isaacs condition may fail. Same as the non-robust case, we also show that the optimally controlled wealth process never reaches the so-called ``safe level". This is different from the zero-hazard rate case,  goes back to the work of Pestien and Sudderth \cite{Sudderth85} (also see \cite{Bauerle13}). The goal in the zero-hazard rate case is to reach the safe level (possibly in infinite time) because the individual never dies, whereas the goal when the hazard rate is non-zero is to stay away from the ruin level and to ``win" the game by dying.  Without a deadline, the optimal strategy is to maximize the ratio of drift to volatility squared. Adding death to the problem changes it tremendously. In particular, we shall see that in terms of the optimal investment strategy, the robustness is only non-trivial then. 

Our work extends the discussion in \cite{Young04} to the robust case. 
Unlike \cite{Jaimungal11} and \cite{Jaimungal12} where a scaled entropic penalty leads to explicit solutions, our random horizon robust problem, even in the simple Black-Scholes framework, fails to have an explicit solution in general, whether the penalty is scaled or not. Moreover, due to degeneracy and the control space being unbounded, the classical nonlinear elliptic theory by Krylov \cite{Krylov} cannot be applied directly. So we have to resort to the theory of viscosity solutions and then upgrade regularity by bootstrapping. Our work differs from \cite{Bordigoni07}, \cite{Hu11}, \cite{Schied07}, \cite{HSchied07} in the methodology. The BSDE characterizations in \cite{Bordigoni07} and \cite{Hu11} only focus on the inner $\mathbb{Q}$-maximization problem and do not describe the optimal investment strategy or the saddle point. The duality approach of \cite{Schied07} requires the infimum and supremum to be exchangeable, which does not hold in our case with certain choice of parameters. The classical duality $\log\mathbb{E}[e^{X}]=\sup_{\mathbb{Q}\in\mathcal{Q}_{abs}}\{\mathbb{E}^{\mathbb{Q}}[X]-h(\mathbb{Q}|\mathbb{P})\}$ between free energy and entropic penalty may look useful at a first glance, but the uncertainty set $\mathcal{Q}_{abs}$ does not preserve the independence between asset price and mortality, and does not leave room for varied confidence levels regarding different model components.\footnote{$\mathcal{Q}_{abs}$ denotes the set of measures that are absolutely continuous with respect to $\mathbb{P}$ and have finite entropy.} In addition, we are not using the exact entropic function $h$, but its variant $h^d$. Due to time-inconsistency issue, we do not consider uncertainty in hazard rate. It could be an interesting extension to have uncertain Poisson jump rate (see e.g. \cite{LimShanthikumar}, \cite{MataramvuraOksendal}, \cite{Jaimungal13}, \cite{BenBertBrown}) and to allow varied positive levels of ambiguity aversion (see e.g. \cite{Wang03}, \cite{Jaimungal11}).

For the construction of a viscosity solution to the HJB equation, we use a ``comparison + Perron's method" approach described in \cite{UsersGuide} instead of the usual route of ``dynamic programming principle (DPP) + value function is a viscosity solution + comparison". The reason is that robust optimization problems resemble stochastic differential games in which nature can be regarded as the second player, and the DPP for games is generally complicated because of measurability issues. One either has to use the Elliott-Kalton formulation where one player uses controls and the other player uses ``strategies", i.e. maps defined on a set of controls satisfying nonanticipitivity (see e.g. \cite{Fleming89}, \cite{Fleming11}, \cite{BayraktarSong}), or restrict oneself to strategies of simple form, for example, to what S{\^{\i}}rbu \cite{Sirbu13} calls elementary strategies. Both ways to get around the measurability issues are not ideal for us. In particular, it is a bit unnatural for us to use the Elliott-Kalton formulation and assume nature is a strategic player against us, because nature has no payoff and is disinterested. It turns out that the classical Perron's method yields a much simpler and more elegant construction. The only drawback is that regularity now becomes very important, otherwise the constructed solution cannot be related to the value function. Fortunately, we are able to upgrade regularity and carry out a verification theorem. The approach outlined here was first used by Jane\v{c}ek and S{\^{\i}}rbu \cite{Janecek12} in a pure stochastic control problem.

Convexity is usually key to upgrading regularity. One challenge introduced by robustness, as we have pointed out, is the loss of convexity of the value function for a class of parameters. In fact, even for non-robust lifetime ruin problems, \textit{a priori} convexity of the value function is not clear. For example, \cite{BayraktarYoung11} obtains convexity for a lifetime ruin problem with stochastic consumption by going to a controller-and-stopper problem whose convex dual is related to the original problem through a dimension reduction. We overcome this challenge by working with an equivalent convex problem obtained through the Cole-Hopf transformation. Once we have convexity, it is easy to upgrade to $C^1$-regularity using convex analysis and the theory of viscosity solutions. We further upgrade to $C^2$-regularity by analyzing a Poisson equation, where we borrow some techniques from \cite{Janecek12} and \cite{ShreveSoner94}. One may try to prove $C^2$-regularity by the regularization method used in \cite{Zariphopoulou94} and \cite{Duffie97}, 
but such an approach requires us to prove the existence of a positive lower bound on $\pi$ that is independent of the regularization on compact intervals away from the safe level, which we find to be difficult to establish.

The rest of the paper is organized as follows. In Section~\ref{sec:formulation}, we set up the problem, derive the HJB equation and feedback forms heuristically, and state the main results. Section~\ref{sec:psi0} provides an explicit solution when the hazard rate is zero, which is not only interesting for its own sake, but serves as a useful upper bound in the analysis of the general case. Sections~\ref{sec:viscosity_soln} and ~\ref{sec:regularity} are devoted to establishing the existence of a classical solution to the HJB equation, with Secitons~\ref{sec:viscosity_soln} focusing on Perron's construction of a viscosity solution, and Section~\ref{sec:regularity} on regularity. In Section~\ref{sec:verification}, we give a verification theorem and the proof of our main results. In order to prove verification theorem, we also show the boundedness and Lipschitz continuity of the optimal investment strategy. Sections~\ref{sec:properties} collects some additional properties of the optimal investment strategy and the value function. Sections~\ref{sec:numerics} provides numerical results and formulas for small $\varepsilon$-expansion.

\section{Problem Formulation and Main Results}\label{sec:formulation}
Let $\Omega^M$ be the space of continuous functions $\omega: [0,\infty)\rightarrow \mathbb{R}$, equipped with the topology of uniform convergence on compact subintervals of $[0,\infty)$. Let $\mathcal{F}^M$ be the Borel sigma-algebra on $\Omega^M$ and $\mathbb{P}^M$ be the Wiener measure on $(\Omega^M,\mathcal{F}^M)$. The coordinate map $B_t(\omega):=\omega(t)$ is a standard Brownian motion in this space. Here $\mathbb{P}^M$ serves as a reference measure which reflects an individual's belief about the market. Let $N=(N_t)_{t\geq 0}$ be a Poisson process with rate $\lambda$ defined on another probability space $(\Omega^d,\mathcal{F}^d,\mathbb{P}^d)$. Let $\tau_d$ be the first time that the Poisson process jumps, modeling the death time of the individual. $\tau_d$ is an exponential random variable with parameter $\lambda$ which is known as the \textit{hazard rate} in this context. Define
\[(\Omega,\mathcal{F},\mathbb{P}):=(\Omega^M\times\Omega^d, \mathcal{F}^M\otimes \mathcal{F}^d, \mathbb{P}^M\times\mathbb{P}^d).\]
$B$ and $N$ are independent on this space, and remain a Brownian motion and a Poisson process, respectively. Let $\mathbb{F}=(\mathcal{F}_t)_{t\geq 0}$ be the (raw) filtration generated by the Brownian motion $B$ and $\mathbb{G}=(\mathcal{G}_t)_{t\geq 0}$ be the filtration generated by $B$ and the process $1_{\{\tau_d\leq t\}}$. Assume both $\mathbb{F}$ and $\mathbb{G}$ have been made right continuous. However, we do not complete the filtrations because later on, we would like to include measures that are only locally equivalent to $\mathbb{P}$ as part of our consideration.\footnote{By locally equivalent, we mean equivalent on $\mathcal{G}_t$ for all $t\geq 0$. Although the filtrations in our setup is not complete, stochastic integral can still be defined and has all the usual properties. In particular, It\^{o}'s lemma is still valid. See, for example, chapter 1 of \cite{Jacod}.}

The individual invests in a financial market consists of a risk-free bank account with interest rate $r>0$ and a risky asset whose price $S_t$ follows a geometric Brownian motion:
\begin{equation*}
dS_t=\mu S_t+\sigma S_t dB_t, \quad S_0=S>0
\end{equation*}
where $\mu>r$ and $\sigma>0$. Let $\bfpi_t$ be the amount that the individual invests in the risky asset at time $t$. Apart from investment, the individual also consumes at a constant rate $c>0$ of her current wealth $w$.\footnote{To simplify the discussion, we only work with constant consumption rate. But the main techniques can be applied to proportional consumption rate, and more generally, to the case when the consumption rate is a non-negative, Lipschitz continuous function of wealth.}
Her wealth $W_t$ evolves according to the stochastic differential equation (SDE):
\begin{equation*}
dW_t=[rW_t+(\mu-r)\bfpi_t-c]dt+\sigma\bfpi_t dB_t, \quad W_0=w.
\end{equation*}
An investment strategy $\bfpi$ is admissible if it is $\mathbb{F}$-progressively measurable and 
almost surely bounded (uniformly in time).\footnote{Almost sure boundedness can be relaxed as long as the best drift distortion in response to each $\bfpi$ defines an admissible measure $\mathbb{Q}\in\mathscr{Q}$ where $\mathscr{Q}$ is the model uncertainty set to be introduced.} 
Denote by $\mathscr{A}$ the set of all admissible strategies.

Let $\tau_b:=\inf\{t\geq 0: W_t\leq b\}$ be the first time the individual's wealth falls to or below a specified ruin level $b$. The individual aims at minimizing the probability that ruin happens before death, i.e. $\tau_b<\tau_d$, in a robust sense. More precisely, she suspects that the drift of the risky asset may be misspecified. 
So instead of optimizing under the reference measure $\mathbb{P}$, she considers a set $\mathscr{Q}$ of candidate measures that are locally equivalent to $\mathbb{P}$, and penalizes their deviation from $\mathbb{P}$. Here we assume the individual is only robust against the market model, but not the death time model, nor the independence between them. So elements in $\mathscr{Q}$ should be of the form $\mathbb{Q}^M\times \mathbb{P}^d$ so that $\tau_d$ remains an $exp(\lambda)$ random variable under all candidate measures. Let $h(\mathbb{Q}|\mathbb{P}):=\mathbb{E}^{\mathbb{Q}}[\log \frac{d\mathbb{Q}}{d\mathbb{P}}]$ be the relative entropic function. Denote by $\mathbb{Q}_{t}$ the restriction of a measure $\mathbb{Q}$ to $\mathcal{G}_t$. We penalize the deviation from $\mathbb{P}$ using a variant of $h$:
\begin{equation*}
h^d(\mathbb{Q}|\mathbb{P}):=h(\mathbb{Q}_{\tau_d}|\mathbb{P}_{\tau_d})
\end{equation*}
which only measures the relative entropy on $\mathcal{G}_{\tau_d}$; that is, the individual does not care about drift uncertainty after death. She faces the following robust optimization problem:
\begin{equation}\label{optprob}
\psi(w;\varepsilon)=\inf_{\pi\in\mathscr{A}}\sup_{\mathbb{Q}\in\mathscr{Q}} \left\{\mathbb{Q}_w(\tau_b<\tau_d)-\frac{1}{\varepsilon} h^d(\mathbb{Q}|\mathbb{P})\right\},
\end{equation}
where the subscript $w$ represents conditioning on the event $W_0=w$. The parameter $\varepsilon$ measures the individual's level of ambiguity aversion or preference for robustness. $\varepsilon\downarrow 0$ corresponds to the classical non-robust case since all measures other than $\mathbb{P}$ would give a very negative value, thus not optimal for the inner maximization problem. A larger $\varepsilon$ means the individual is more ambiguity averse, has less faith in the reference model and will consider larger drift distortion. $\varepsilon\rightarrow \infty$ corresponds to the worst-case approach, i.e. the individual has equal belief in all candidate measures and optimize again the worst-case scenario.

We now give the precise definition of the set $\mathscr{Q}$ of candidate measures. A probability measure $\mathbb{Q}\in\mathscr{Q}$ if
\begin{equation}\label{dQdP}
\frac{d\mathbb{Q}_t}{d\mathbb{P}_t}=\exp\left(-\frac{1}{2}\int_0^t \bftheta^2_s ds+\int_0^t \bftheta_s dB_s\right), \quad t\geq 0
\end{equation}
for some $\mathbb{F}$-progressively measurable process $\bftheta$ satisfying $\mathbb{E}[e^{\frac{1}{2}\int_0^t\bftheta_s^2ds}]<\infty$ for all $t\geq 0$, and $\mathbb{E}^{\mathbb{Q}}[\int_0^\infty e^{-\lambda s}\bftheta^2_s ds]<\infty$. Conversely, given any $\mathbb{F}$-progressively measurable process $\bftheta$ satisfying $\mathbb{E}[e^{\frac{1}{2}\int_0^t\bftheta_s^2ds}]<\infty$ for all $t\geq 0$, we can define a consistent family of measures $\mathbb{Q}_t\sim\mathbb{P}_t$ on $(\Omega, \mathcal{G}_t)$ by \eqref{dQdP}. By \cite[Lemma 4.2]{Stroock} (also see \cite[Proposition 1]{Huang92}), there exists a probability measure $\mathbb{Q}$ on $(\Omega,\mathcal{F})$ such that $\mathbb{Q}|_{\mathcal{G}_t}=\mathbb{Q}_t$ for all $t\geq 0$.\footnote{The existence of such a measure is not guaranteed if the filtration has been completed w.r.t. $\mathbb{P}$.} Throughout this paper, we will use boldface greeks $\bfpi, \bftheta$ to denote controls (as stochastic processes) and plain greeks $\pi, \theta$ to denote the values that the controls can take. Since $\tau_d$ is independent of $\mathbb{F}$, the distribution of $\tau_d$ is invariant under such change of measure. Under $\mathbb{Q}$, $S_t$ has drift $\mu+\sigma\bftheta_t$ and $W_t$ has dynamics:
\begin{align}
dW_t&=[rW_t+(\mu+\sigma\bftheta_t-r)\bfpi_t-c]dt+\sigma\bfpi_t dB^{\mathbb{Q}}_t \label{QW}
\end{align}
where $B^\mathbb{Q}$ is a $\mathbb{Q}$-Brownian motion independent of $\tau_d$.

Let $\mathbb{Q}\in\mathscr{Q}$. We have 
\begin{align*}
h^d(\mathbb{Q}| \mathbb{P})&=\mathbb{E}^{\mathbb{Q}}\left[-\frac{1}{2}\int_0^{\tau_d} \bftheta^2_s ds+\int_0^{\tau_d} \bftheta_s dB_s\right]\\
&=\mathbb{E}^{\mathbb{Q}}\left[-\frac{1}{2}\int_0^{\tau_d} \bftheta^2_s ds+\int_0^{\tau_d} \bftheta_s (dB^{\mathbb{Q}}_s+\bftheta_sds)\right]\\
&=\mathbb{E}^{\mathbb{Q}}\left[\frac{1}{2}\int_0^{\tau_d} \bftheta^2_s ds\right]=\mathbb{E}^{\mathbb{Q}}\left[\frac{1}{2}\int_0^\infty  e^{-\lambda s}\bftheta^2_s ds\right]<\infty.
\end{align*}
\begin{remark}
We can also compute the relative entropy process $h_t(\mathbb{Q}|\mathbb{P}):=h(\mathbb{Q}_t|\mathbb{P}_t)=\mathbb{E}^{\mathbb{Q}}\left[\frac{1}{2}\int_0^{t} \bftheta^2_s ds\right]$. Observe that
\begin{align*}
\mathbb{E}^{\mathbb{Q}}[h_{\tau_d}(\mathbb{Q}|\mathbb{P})]&=\mathbb{E}^{\mathbb{Q}}\left[\int_0^\infty \lambda e^{-\lambda t}h_t(\mathbb{Q}|\mathbb{P})dt\right]=\mathbb{E}^{\mathbb{Q}}\left[\int_0^\infty  \lambda e^{-\lambda t} \frac{1}{2} \int_0^{t}\bftheta^2_s ds dt\right]\\
&=\mathbb{E}^{\mathbb{Q}}\left[\frac{1}{2}\int_0^\infty \bftheta^2_s\int_s^{\infty}\lambda e^{-\lambda t} dt ds\right]=\mathbb{E}^{\mathbb{Q}}\left[\frac{1}{2}\int_0^\infty  e^{-\lambda s}\bftheta^2_s ds\right]=h^d(\mathbb{Q}| \mathbb{P}).
\end{align*}
So we can also think of $h^d$ as penalizing the expected relative entropy at death time.
\end{remark}

Substituting the expression for $h^d(\mathbb{Q}|\mathbb{P})$ into \eqref{optprob} and using the distribution of $\tau_d$, we rewrite the value function as:
\begin{defn}[Robust value function]
\begin{equation*}
\begin{aligned}
\psi(w;\varepsilon)
&=\inf_{\pi\in\mathscr{A}}\sup_{\mathbb{Q}\in\mathscr{Q}}\mathbb{E}^{\mathbb{Q}}_{w}\left[ \int_0^\infty e^{-\lambda s}\left( \lambda 1_{\{\tau_b<s\}}-\frac{1}{2\varepsilon}\bftheta_s^2\right)ds\right]
\end{aligned}
\end{equation*}
where $W$ has $\mathbb{Q}$-dynamics \eqref{QW}.
\end{defn}

Denote by $\psi_0$ the non-robust value function and by $\mathfrak{p}$ the robust value function when $\lambda=0$, i.e. when the individual never dies. $\psi_{0}$ has the explicit formula (see \cite{Young04}):
\begin{equation}\label{psiP}
\psi_{0}(w)=\begin{cases}
1, & w\leq b;\\
\left(\frac{c-rw}{c-rb}\right)^d, & b\leq w\leq c/r;\\
0, & w\geq c/r;
\end{cases}
\end{equation}
and the optimal investment strategy in feedback form is given by
\begin{equation*}
\pi_{0}(w)=\frac{\mu-r}{\sigma^2}\frac{c-rw}{(d-1)r}
\end{equation*}
for $w\in(b,w_s)$, where
\begin{equation}\label{Rd}
d=\frac{1}{2r}\left[(r+\lambda+R)+\sqrt{(r+\lambda+R)^2-4r\lambda}\right]>1, \quad R=\frac{1}{2}\left(\frac{\mu-r}{\sigma}\right)^2.
\end{equation}
Throughout this paper, $d$ and $R$ will be reserved for the constants defined above.
We will also provide an explicit formula for $\mathfrak{p}$ later. For now, we make the simple observation:
\begin{equation}\label{order_of_value_functions}
0\leq \psi_{0}\leq \psi\leq \mathfrak{p}\leq 1,
\end{equation}
where the second inequality holds because $\mathbb{P}\in\mathscr{Q}$ so that 
\[\psi_0=\inf_{\pi\in\mathscr{A}}\mathbb{P}_w(\tau_b<\tau_d)\leq \inf_{\pi\in\mathscr{A}}\sup_{\mathbb{Q}\in\mathscr{Q}} \left\{\mathbb{Q}_w(\tau_b<\tau_d)-\frac{1}{\varepsilon} h^d(\mathbb{Q}|\mathbb{P})\right\}=\psi,\]
the third inequality holds because ruin before death is no more likely than ruin before infinity, and the last inequality holds because we are optimizing a real probability minus a nonnegative penalty. This means we can treat the robust optimal value as a conservative ruin probability. The penalty term will only cause a small distortion on the ruin probability and will never drive it negative because only measures with small relative entropy are relevant, i.e. have the possibility of being worse than the reference measure. 

The definition of $\psi(w;\varepsilon)$ implies it is non-decreasing in $\varepsilon$, since the penalty gets smaller as $\varepsilon$ gets larger. We will suppress the argument $\varepsilon$ throughout the rest of this paper unless we need to emphasize the $\varepsilon$-dependence. The limit as $\varepsilon\downarrow 0$ gives us the non-robust value function $\psi_{0}$. The limit as $\varepsilon\rightarrow \infty$ gives us the worst-case value function:
\begin{equation*}
\psi_\infty(w):=\inf_{\pi\in\mathscr{A}}\sup_{\mathbb{Q}\in\mathscr{Q}} \mathbb{Q}_w(\tau_b<\tau_d).
\end{equation*}
For the worse-case problem, the optimal investment strategy is not to invest at all since the drift can be arbitrarily unfavorable (negative if one longs and positive if one shorts) without incurring any penalty. The individual can only hope to ``win" the game by dying quickly enough before consumption drags her wealth down to the ruin level. In this case, the agent's wealth solves the deterministic differential equation: 
\[dW_t=(rW_t-c)dt, \quad W_0=w.\]
Simple computation leads to $\tau_b=\frac{1}{r}\ln\frac{c-rb}{c-rw}$ and $\mathbb{Q}(\tau_b<\tau_d)=e^{-\lambda\tau_b}=\left(\frac{c-rw}{c-rb}\right)^{\frac{\lambda}{r}}$ for $w\in[b,w_s]$ and for all $\mathbb{Q}\in\mathscr{Q}$. So
\begin{equation}\label{psi_inf}
\psi_\infty(w)=\left(\frac{c-rw}{c-rb}\right)^{\frac{\lambda}{r}}, \quad w\in[b,w_s].
\end{equation}
Alternatively, we can obtain the above formula for $\psi_\infty$ by solving \eqref{BVP} with $\varepsilon$ set to infinity; a verification theorem has to be done then.

Back to the general case. $\psi(w)$ is non-increasing in $w$ since the individual is clearly better off with a larger initial wealth. When $w\leq b$, $\tau_b=0$ and $\psi(w)=1$ because the inner supremum can always be attained by the reference measure $\mathbb{P}$. Notice that by \eqref{order_of_value_functions}, we have continuity of $\psi$ at $w=b$ since $1\geq\lim_{w\rightarrow b}\psi(w)\geq\lim_{w\rightarrow b}\psi_{0}(w)=1$. Let $w_s:=c/r$. $w_s$ gives a ``safe" wealth level at which the individual can sustain her consumption by putting all her money in the bank and consuming the interest. This means $\psi(w)=0$ when $w\geq w_s$. Drift uncertainty is irrelevant here since the individual can always play safe by not investing in the risky asset. We also have continuity of $\psi$ at $w=w_s$ because $0\leq\lim_{w\rightarrow w_s}\psi(w)\leq\lim_{w\rightarrow w_s}\psi_{\infty}(w)=0$.

The associated HJB equation for $\psi$ in the interval $(b,w_s)$ is
\begin{equation}\label{Isaacs}
\lambda \psi(w)=\inf_\pi \sup_\theta\left\{-\frac{1}{2\varepsilon}\theta^2+\left(rw-c+(\mu+\sigma\theta-r)\pi\right)\psi'(w)+\frac{1}{2}\sigma^2\pi^2\psi''(w)\right\},
\end{equation}
with boundary conditions $\psi(b)=1$ and $\psi(w_s)=0$. Notice that the expression inside the braces is quadratic in $\theta$ with negative leading coefficient. By the first order condition, the optimal $\theta$ given $\pi$ equals $\sigma \varepsilon \pi \psi'$. Substituting $\theta=\sigma \varepsilon \pi \psi'$ back into \eqref{Isaacs}, we get
\begin{equation}\label{HJB}
\lambda \psi=\inf_{\pi}\left\{ \frac{1}{2}\sigma^2\left(\varepsilon(\psi')^2+\psi''\right)\pi^2+(\mu-r)\psi'\pi+\left(rw-c\right)\psi'\right\}.
\end{equation}
Suppose $\varepsilon(\psi')^2+\psi''>0$, we use first order condition again to find the candidate optimizer
\begin{equation}\label{pistar}
\pi^\ast = -\frac{\mu-r}{\sigma^2} \frac{\psi'}{\varepsilon (\psi')^2+\psi''}.
\end{equation}
It follows that 
\begin{equation}\label{thetastar}
\theta^\ast = - \frac{\mu-r}{\sigma} \frac{\varepsilon(\psi')^2}{\varepsilon (\psi')^2+\psi''}.
\end{equation}
Substituting \eqref{pistar} into \eqref{HJB}, we obtain the following Dirichlet boundary value problem:
\begin{subequations}
\label{BVP}
\begin{align}
&\lambda \psi=- \frac{R(\psi')^2}{\varepsilon(\psi')^2+\psi''}+\left(rw-c\right)\psi' \label{BVP:a}\\
&\psi(b)=1,\quad \psi(w_s)=0 \label{BVP:b}
\end{align}
\end{subequations}
where $R$ is the positive constant defined in \eqref{Rd}. When $\varepsilon=0$, we recover the non-robust value function $\psi_{0}$ whose formula is given in \eqref{psiP}. When $\varepsilon=\infty$, we get the worst-case value function $\psi_\infty$ whose formula is given in \eqref{psi_inf}.

\begin{remark}
The Isaacs condition does not hold for our robust problem without further restrictions on model parameters. Suppose $\psi''<0$ but $\varepsilon(\psi')^2+\psi''>0$, then maximizing over $\theta$ first and minimizing over $\pi$ second in \eqref{Isaacs} will lead to a finite Hamiltonian, but minimizing over $\pi$ first and maximizing over $\theta$ second will lead to an unbounded Hamiltonian. From another perspective, we expect the value function of each fixed-measure lifetime ruin problem to be convex, otherwise the Hamiltonian would explode. Maximizing over these convex functions will yield a convex function. On the other hand, our robust value function may be concave in certain region. When $r>\lambda$, the worst-case value function $\psi_\infty$ is concave. Since $\psi(w;\varepsilon)$ increases to $\psi_\infty(w)$ as $\varepsilon\rightarrow \infty$, $\psi(w;\varepsilon)$ cannot be convex everywhere for $\varepsilon$ sufficiently large. See Proposition \ref{psiconvex} for a more detailed discussion on how convexity depends on $\lambda$, $r$ and $\varepsilon$.
\end{remark}

Rigorous analysis of equation \eqref{HJB} will be done in Sections~\ref{sec:viscosity_soln} and ~\ref{sec:regularity}. Section~\ref{sec:psi0} provides an explicit solution to the Dirichlet problem \eqref{BVP} when $\lambda=0$. We end this section with our main result the proof of which is given at the end of Section \ref{sec:verification}. 
\begin{thm}\label{main_thm}
The robust value function $\psi$ satisfies $\psi(w)=1$ for $w\leq b$, $\psi(w)=0$ for $w\geq w_s$. For $w\in(b,w_s)$, $\psi(w)$ is the unique $C^1[b,w_s]\cap C^2[b,w_s)$ solution to \eqref{Isaacs} or \eqref{HJB} satisfying the boundary conditions $\psi(b)=1$ and $\psi(w_s)=0$. The optimal investment policy is
\[\bfpi^\ast_t=-\frac{\mu-r}{\sigma^2}\frac{\psi'(W_t)}{\varepsilon(\psi'(W_t))^2+\psi''(W_t)}1_{(b,w_s)}(W_t),\]
and the optimal drift distortion is $\sigma\bftheta^\ast$ where
\[\bftheta^\ast_t=-\frac{\mu-r}{\sigma}\frac{\varepsilon(\psi'(W_t))^2}{\varepsilon(\psi'(W_t))^2+\psi''(W_t)}1_{(b,w_s)}(W_t).\]
\end{thm}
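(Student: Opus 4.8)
The plan is to combine the PDE existence/regularity theory developed in Sections~\ref{sec:viscosity_soln} and~\ref{sec:regularity} with a verification argument. First I would invoke the results of those sections to produce a function $u\in C^1[b,w_s]\cap C^2[b,w_s)$ solving the Dirichlet problem \eqref{BVP} (equivalently \eqref{HJB}) with $u(b)=1$, $u(w_s)=0$, and — crucially — satisfying $\varepsilon(u')^2+u''>0$ on $(b,w_s)$, so that the feedback maps \eqref{pistar} and \eqref{thetastar} are well-defined with $u$ in place of $\psi$. I would also quote from the end of Section~\ref{sec:verification} the boundedness and Lipschitz continuity of the candidate feedback strategy $\pi^\ast(\cdot)$ on $[b,w_s]$; this is what guarantees that the closed-loop wealth SDE \eqref{QW} has a unique strong solution and that $\bfpi^\ast\in\mathscr{A}$, and that the associated $\bftheta^\ast$ yields an admissible $\mathbb{Q}^\ast\in\mathscr{Q}$ (finite exponential moments and $\mathbb{E}^{\mathbb{Q}^\ast}[\int_0^\infty e^{-\lambda s}(\bftheta^\ast_s)^2\,ds]<\infty$).

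Extend $u$ by $u\equiv 1$ on $(-\infty,b]$ and $u\equiv 0$ on $[w_s,\infty)$; by the boundary matching and the $C^1$ regularity at $b$ and $w_s$, $u$ is $C^1$ on all of $\mathbb{R}$ (with a possible kink in the second derivative only at $b$ and $w_s$, which is harmless since the wealth process spends zero Lebesgue time there once stopped appropriately). The verification then runs in two directions. \emph{Upper bound on $\psi$:} for the specific strategy $\bfpi^\ast$, I apply \Ito's formula to $e^{-\lambda t}u(W_t)$ under an arbitrary $\mathbb{Q}\in\mathscr{Q}$; since $u$ solves \eqref{HJB} and $\pi^\ast$ is the minimizer, the $dt$-drift is dominated by $e^{-\lambda t}\big(\lambda 1_{\{\tau_b<t\}}-\tfrac{1}{2\varepsilon}\bftheta_t^2\big)$ after completing the square in $\theta$ (using that $\theta\mapsto -\tfrac{1}{2\varepsilon}\theta^2+\sigma\theta\pi^\ast u'$ is maximized at $\theta^\ast$), so $e^{-\lambda t}u(W_t)$ plus the running reward is a $\mathbb{Q}$-supermartingale; taking expectations, letting $t\to\infty$, and using $0\le u\le 1$ plus the dominated/monotone convergence on the reward gives $u(w)\ge \sup_{\mathbb{Q}}\mathbb{E}^{\mathbb{Q}}_w[\cdots]\ge \inf_\pi\sup_{\mathbb{Q}}[\cdots]=\psi(w)$ after taking the sup over $\mathbb{Q}$. \emph{Lower bound on $\psi$:} fix the measure $\mathbb{Q}^\ast$ generated by $\bftheta^\ast$; for an arbitrary admissible $\bfpi$, the same \Ito\ computation now gives the reverse inequality in the drift (because $\pi^\ast$ minimizes, any other $\pi$ gives a larger Hamiltonian, hence $e^{-\lambda t}u(W_t)$ plus running reward is a $\mathbb{Q}^\ast$-submartingale), yielding $u(w)\le \mathbb{E}^{\mathbb{Q}^\ast}_w[\cdots]\le \sup_{\mathbb{Q}}\mathbb{E}^{\mathbb{Q}}_w[\cdots]$; taking the inf over $\bfpi$ gives $u(w)\le\psi(w)$. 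Combining, $\psi=u$ on $[b,w_s]$, and the saddle point $(\bfpi^\ast,\bftheta^\ast)$ is attained, which is exactly the feedback form claimed.

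Uniqueness in the class $C^1[b,w_s]\cap C^2[b,w_s)$ follows because any such solution satisfying the boundary data must, by the verification argument just run in reverse, coincide with the value function $\psi$; alternatively, one can note that on the region where $\varepsilon(v')^2+v''>0$ the equation \eqref{BVP:a} is a regular second-order ODE and a shooting/monotonicity argument pins down the solution uniquely — but the verification route is cleaner and is the one I would write up. The last sentence of the theorem (the explicit feedback forms for $\bfpi^\ast$ and $\bftheta^\ast$) is then immediate from \eqref{pistar}–\eqref{thetastar} with $\psi=u$, together with the convention that no risky investment and no drift distortion occur outside $(b,w_s)$.

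I expect the main obstacle to be the integrability bookkeeping in the verification, not the algebra: one must ensure the local \Ito\ martingale term $\int_0^{t}e^{-\lambda s}\sigma\bfpi^\ast_s u'(W_s)\,dB^{\mathbb{Q}}_s$ is a true martingale (or at least controllable via localization) uniformly over all competing $\mathbb{Q}\in\mathscr{Q}$ in the upper-bound direction and over all $\bfpi\in\mathscr{A}$ in the lower-bound direction, and that the $t\to\infty$ limit can be taken. This is where the boundedness of $\pi^\ast$, the bound $0\le u\le1$, the finite-entropy constraint defining $\mathscr{Q}$, and the a priori ordering \eqref{order_of_value_functions} (which forces the optimally controlled wealth to stay below $w_s$ and gives the right decay, via comparison with $\mathfrak{p}$ and $\psi_\infty$) all get used; a standard localization at exit times from $(b+\delta,w_s-\delta)$ followed by $\delta\downarrow0$ handles the possible non-$C^2$ points at the endpoints. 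Showing that the optimally controlled wealth never actually reaches $w_s$ (asserted in the introduction) is a separate point that I would address via a Feller-type boundary-classification or gradient estimate near $w_s$ using the behavior of $u'$ there; it is not logically needed for the statement of Theorem~\ref{main_thm} itself but reinforces why the running reward behaves well at infinity.
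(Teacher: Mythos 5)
Your proposal follows essentially the same architecture as the paper: invoke the Perron--comparison--regularity machinery of Sections~\ref{sec:viscosity_soln}--\ref{sec:regularity} to produce a candidate $C^1[b,w_s]\cap C^2[b,w_s)$ solution $\hat u$ with $\varepsilon(\hat u')^2+\hat u''>0$, quote Lemma~\ref{pi_bdd} and Proposition~\ref{pi_Lip} for boundedness and Lipschitz continuity of $\pi^\ast$, and then run a two-sided It\^o verification (upper bound by playing $\bfpi^\ast$ against every $\mathbb{Q}\in\mathscr{Q}$, lower bound by playing the worst $\mathbb{Q}^\ast$ against every $\bfpi\in\mathscr{A}$). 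That is exactly the route the paper takes, so the overall plan is sound.

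The one implementation-level difference worth flagging: the paper does not discount by $e^{-\lambda t}$ but instead models death explicitly, adjoining a coffin state $\Delta$ and applying It\^o to $u(\ubar{W}_t)$ where $\ubar{W}_t = W_t 1_{\{t<\tau_d\}} + \Delta 1_{\{t\ge\tau_d\}}$ in the enlarged filtration containing the Poisson process $N$; the $-\lambda u$ term then arises from compensating $dN_t$. Your version, applying It\^o to $e^{-\lambda t}u(W_t)$ against the reformulated payoff $\int_0^\infty e^{-\lambda s}(\lambda 1_{\{\tau_b<s\}}-\tfrac1{2\varepsilon}\bftheta_s^2)\,ds$, is a mathematically equivalent and arguably leaner bookkeeping device. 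Both yield identical inequalities after taking expectations, so this is a cosmetic choice rather than a genuinely different route.

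Two points you gloss over that the paper handles with some care and that you would need to fill in. First, in the lower-bound direction with arbitrary $\bfpi\in\mathscr{A}$, the wealth can cross $w_s$ and come back, so the stopped value $u(W_{\tau_b\wedge\tau})$ does not directly reproduce the indicator $1_{\{\tau_b<\tau_d\}}$; the paper first restricts to the subclass $\mathscr{A}_0$ of strategies that freeze the wealth upon hitting $b$ or $w_s$, obtains the inequality there, and then argues that $\mathscr{A}\setminus\mathscr{A}_0$ cannot beat $\mathscr{A}_0$. Your appeal to ``localization at exit times and $\delta\downarrow 0$'' is pointing in roughly the right direction but does not by itself reconcile the $\int_0^{\tau_b\wedge\tau}$ penalty with the $\int_0^{\tau_d}$ penalty in the objective. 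Second, the uniqueness claim in the theorem should be credited to the comparison principle of Proposition~\ref{comparisonP1} (a classical $C^1\cap C^2$ solution with the boundary data is a viscosity solution, hence coincides with the unique one), rather than to re-running verification on a putative second solution, since that would require re-establishing boundedness and Lipschitz continuity of the associated feedback map for that solution. Neither issue is a conceptual obstruction, but both are real gaps in the writeup as it stands.
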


\section{Explicit solution for the $\lambda=0$ case}\label{sec:psi0}

Setting $\lambda=0$ in \eqref{BVP}, we get
\begin{equation}\label{BVP0}
\begin{aligned}
&0=- \frac{R(\psi')^2}{\epsilon(\psi')^2+\psi''}+\left(rw-c\right)\psi' \\
&\psi(b)=1,\quad \psi(w_s)=0.
\end{aligned}
\end{equation}
Using the exponential transformation $\phi=e^{\varepsilon \psi}$, also called Cole-Hopf transformation in PDE theory, the nonlinearity in the denominator is removed and \eqref{BVP0} becomes
\begin{equation*}
\begin{aligned}
&0=-R \frac{(\phi')^2}{\phi''}+\left(rw-c\right)\phi'\\
&\phi(b)=e^{\varepsilon},\quad \phi(w_s)=1.
\end{aligned}
\end{equation*}
Suppose $\phi'\neq 0$ and let $u=\phi'$. The second order ordinary differential equation (ODE) is further reduced to
\begin{equation*}
u'=\frac{R}{rw-c}u,
\end{equation*}
the general solution of which is given by
\[u(w)=Ae^{R\int_b^w \frac{1}{rz-c}dz}=A\left(\frac{c-rw}{c-rb}\right)^{\frac{R}{r}}, \quad A\in\mathbb{R}.\]
It follows that
\begin{equation*}
\phi(w)=e^{\varepsilon}+A\int_b^w \left(\frac{c-rz}{c-rb}\right)^{\frac{R}{r}} dz=e^{\varepsilon}-A\frac{c-rb}{R+r}\left[\left(\frac{c-rw}{c-rb}\right)^{\frac{R}{r}+1}-1\right].
\end{equation*}
Using the boundary condition at the safe level, we can determine the constant $A$ and obtain
\begin{equation*}
\phi(w)=1+(e^\varepsilon-1)\left(\frac{c-rw}{c-rb}\right)^{\frac{R}{r}+1}.
\end{equation*}
So the solution to the Dirichlet problem \eqref{BVP0} is
\begin{equation}\label{psi0}
\psi(w)=\frac{1}{\varepsilon}\ln\left[1+(e^\varepsilon-1)\left(\frac{c-rw}{c-rb}\right)^{\frac{R}{r}+1}\right].
\end{equation}
The feedback forms \eqref{pistar}, \eqref{thetastar} become
\begin{equation*}
\varpi=\frac{2(c-rw)}{\mu-r},
\end{equation*}
\begin{equation*}
\vartheta=-\frac{2\sigma(R+r)}{\mu-r}\frac{(e^\varepsilon-1)\left(\frac{c-rw}{c-rb}\right)^{\frac{R}{r}+1}}{1+(e^\varepsilon-1)\left(\frac{c-rw}{c-rb}\right)^{\frac{R}{r}+1}}.
\end{equation*}

The solution given by \eqref{psi0} is a $C^1[b,w_s]\cap C^2[b,w_s)$ function. $\varpi$ and $\vartheta$ are bounded, Lipschitz continuous functions of the state variable on $[b,w_s]$. So a verification theorem can be easily done, showing the function given by \eqref{psi0} is indeed the robust value function $\mathfrak{p}$ on the interval $[b,w_s]$, and $\varpi, \vartheta$ are the optimal feedback controls. We summarize the results in the following theorem.

\begin{thm}
When $\lambda=0$, the robust value function is given by
\[\mathfrak{p}(w)=\frac{1}{\varepsilon}\ln\left[1+(e^\varepsilon-1)\left(\frac{c-rw}{c-rb}\right)^{\frac{R}{r}+1}\right]\]
for $b\leq w\leq w_s$, $\mathfrak{p}(w)=0$ for $w\leq b$ and $\mathfrak{p}(w)=1$ for $w\geq w_s$. The optimal investment policy is
\[\bfvarpi_t=\frac{2(c-rW_t)}{\mu-r}1_{(b,w_s)}(W_t),\]
and the optimal drift distortion is $\sigma\bfvartheta$ where
\[\bfvartheta_t=-\frac{2\sigma(R+r)}{\mu-r}\frac{(e^\varepsilon-1)\left(\frac{c-rW_t}{c-rb}\right)^{\frac{R}{r}+1}}{1+(e^\varepsilon-1)\left(\frac{c-rW_t}{c-rb}\right)^{\frac{R}{r}+1}}1_{(b,w_s)}(W_t).\]
\end{thm}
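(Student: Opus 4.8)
The plan is a classical verification argument built around the closed-form candidate. Let $g$ be the function given by the right-hand side of \eqref{psi0} on $[b,w_s]$, extended by $g\equiv1$ on $[0,b]$ and $g\equiv0$ on $[w_s,\infty)$ (the values forced by continuity, since the formula gives $g(b)=1$, $g(w_s)=0$), and put $\phi:=e^{\varepsilon g}=1+(e^{\varepsilon}-1)(\tfrac{c-rw}{c-rb})^{R/r+1}$ on $[b,w_s]$. First I would collect the analytic facts produced by the Cole--Hopf computation preceding the statement: $g\in C^{1}[b,w_s]\cap C^{2}[b,w_s)$, $g$ is nonincreasing with $0\le g\le1$, $g(b)=1$, $g(w_s)=0$, and $g$ solves \eqref{BVP0} on $(b,w_s)$. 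The point worth isolating is the strict nondegeneracy $\varepsilon(g')^{2}+g''>0$ on $(b,w_s)$: since $\varepsilon(g')^{2}+g''=\phi''/(\varepsilon\phi)$ and $\phi\ge1$, this reduces to $\phi''>0$, which is immediate because $e^{\varepsilon}-1>0$ and $w\mapsto(c-rw)^{R/r+1}$ is strictly convex on $[b,w_s]$ (the exponent exceeds $1$). Hence the first-order conditions \eqref{pistar}--\eqref{thetastar} are legitimate, $(\varpi(w),\vartheta(w))$ is the saddle point of the Hamiltonian in \eqref{Isaacs} at each $w\in(b,w_s)$ with $\inf_{\pi}\sup_{\theta}=\sup_{\theta}\inf_{\pi}=\lambda g(w)=0$ (Isaacs does not fail when $\lambda=0$, thanks to convexity of $\phi$), and $\varpi,\vartheta$ are bounded and continuous on $[b,w_s]$ with $\varpi(w_s)=\vartheta(w_s)=0$.

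\textbf{Upper bound $\mathfrak p\le g$.} Take $\bfpi=\bfvarpi$ and fix an arbitrary $\mathbb Q\in\mathscr Q$ with Girsanov kernel $\bftheta$, so $W$ obeys \eqref{QW}. Apply \Ito's formula to $g(W_{t})$ on $[0,\tau_{b}\wedge T]$: the stochastic integral is a true martingale since $g'\varpi$ is bounded, and the only care needed is a localization at $w_s-1/n$ to accommodate a possible blow-up of $g''$ at $w_s$, where no local-time term arises because $g'$ is continuous. Writing $\mathcal L^{\pi,\theta}g:=(rw-c+(\mu+\sigma\theta-r)\pi)g'+\tfrac12\sigma^{2}\pi^{2}g''$ and using that $\varpi(w)$ attains the inner infimum with $g(w_s)=0$, so $-\tfrac1{2\varepsilon}\theta^{2}+\mathcal L^{\varpi(w),\theta}g(w)\le0$ for all $w\in(b,w_s)$ and all $\theta$, the drift of $g(W_{t})$ is dominated by $\tfrac1{2\varepsilon}\bftheta_{t}^{2}$, so $g(w)\ge\mathbb E^{\mathbb Q}[g(W_{\tau_{b}\wedge T})]-\tfrac1{2\varepsilon}\mathbb E^{\mathbb Q}[\int_{0}^{\tau_{b}\wedge T}\bftheta_{s}^{2}\,ds]$. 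Since the $\bfvarpi$-controlled wealth never reaches $w_s$ (both $\varpi$ and $\sigma\varpi$ vanish there, so $w_s-W$ solves a linear homogeneous SDE and stays positive), letting $T\to\infty$ and using $g\ge0$, $g(b)=1$, monotone convergence on the penalty, and $\mathbb E^{\mathbb Q}[\int_{0}^{\tau_{b}}\bftheta^{2}]\le\mathbb E^{\mathbb Q}[\int_{0}^{\infty}\bftheta^{2}]=2h^{d}(\mathbb Q|\mathbb P)$, we get $g(w)\ge\mathbb Q_{w}(\tau_{b}<\infty)-\tfrac1{\varepsilon}h^{d}(\mathbb Q|\mathbb P)$. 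Taking $\sup_{\mathbb Q}$ then $\inf_{\pi}$ gives $\mathfrak p(w)\le g(w)$.

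\textbf{Lower bound $\mathfrak p\ge g$.} Fix an arbitrary admissible $\bfpi$ and set $\bftheta_{t}:=\sigma\varepsilon\bfpi_{t}g'(W_{t})$, the pointwise best response to $\bfpi_{t}$ in \eqref{Isaacs}; since $g'$ is bounded continuous and $\bfpi$ is bounded, the induced $\mathbb Q^{\ast}$ is well defined and lies in $\mathscr Q$ (for $\lambda=0$ one uses $g'(w_s)=0$, so $\bftheta_{t}\to0$ as $W_{t}\to w_s$, to keep the time-integrated entropy finite; alternatively truncate $\bftheta$ at time $n$, argue, and let $n\to\infty$). By \eqref{HJB} with $\lambda=0$, for $w\in(b,w_s)$ one has $-\tfrac1{2\varepsilon}\bftheta^{2}+\mathcal L^{\pi,\theta}g(w)=\tfrac12\sigma^{2}(\varepsilon(g')^{2}+g'')\pi^{2}+(\mu-r)g'\pi+(rw-c)g'\ge\lambda g(w)=0$, and on $(w_s,\infty)$ this holds trivially since $g'=g''=0=\theta$; hence the drift of $g(W_{t})$ under $\mathbb Q^{\ast}$ is at least $\tfrac1{2\varepsilon}\bftheta_{t}^{2}$, and \Ito's formula (with an \Ito--Tanaka remark at $w_s$, where $g'$ is continuous, and at $b$, which is never crossed) gives $g(w)\le\mathbb E^{\mathbb Q^{\ast}}[g(W_{\tau_{b}\wedge T})]-\tfrac1{2\varepsilon}\mathbb E^{\mathbb Q^{\ast}}[\int_{0}^{\tau_{b}\wedge T}\bftheta_{s}^{2}\,ds]$. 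Letting $T\to\infty$ and identifying $\lim_{T}\mathbb E^{\mathbb Q^{\ast}}[g(W_{\tau_{b}\wedge T})]$ with $\mathbb Q^{\ast}_{w}(\tau_{b}<\infty)$ yields $g(w)\le\mathbb Q^{\ast}_{w}(\tau_{b}<\infty)-\tfrac1{\varepsilon}h^{d}(\mathbb Q^{\ast}|\mathbb P)\le\sup_{\mathbb Q\in\mathscr Q}[\mathbb Q_{w}(\tau_{b}<\infty)-\tfrac1{\varepsilon}h^{d}(\mathbb Q|\mathbb P)]$; taking $\inf_{\pi}$ gives $g(w)\le\mathfrak p(w)$. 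Together with the upper bound this shows $\mathfrak p=g$ on $[b,w_s]$, and $(\bfvarpi,\bfvartheta)$ is a saddle point, since along $\bfvarpi$ the best response is exactly $\bftheta_{t}=\sigma\varepsilon\varpi(W_{t})g'(W_{t})=\vartheta(W_{t})$, whereupon both \Ito\ inequalities collapse to equalities.

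\textbf{Main obstacle.} The \Ito/HJB bookkeeping above is routine; the one delicate step is the identification $\lim_{T}\mathbb E^{\mathbb Q^{\ast}}[g(W_{\tau_{b}\wedge T})]=\mathbb Q^{\ast}_{w}(\tau_{b}<\infty)$ used in the lower bound, i.e.\ controlling the contribution of $\{\tau_{b}=\infty\}$. Since $g(W_{t\wedge\tau_{b}})$ is a bounded submartingale under $\mathbb Q^{\ast}$, it converges a.s.\ and in $L^{1}$ to some $L\in[0,1]$, equal to $g(b)=1$ on $\{\tau_{b}<\infty\}$; the issue is to show $L=0$ on $\{\tau_{b}=\infty\}$. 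This is a pathwise argument: $g$ is bounded, strictly decreasing on $[b,w_s]$ and vanishes beyond $w_s$, so if $g(W_{t})$ converged to a positive limit then $W_{t}$ would converge to some $w^{\ast}\in(b,w_s)$, which would force the diffusion coefficient $\sigma\bfpi_{t}$ to vanish asymptotically while the drift $rW_{t}-c$ stays strictly negative near $w^{\ast}$ --- contradicting the convergence of $W_{t}$; hence $g(W_{t})\to0$ on $\{\tau_{b}=\infty\}$ and dominated convergence closes the gap. (The upper bound never sees this point, since there one simply discards the nonnegative term $g(W_{T})1_{\{\tau_{b}>T\}}$.) So the work concealed by ``easily done'' is this asymptotic analysis, not the verification algebra.
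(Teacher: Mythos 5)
Your proof is a detailed verification argument, which is precisely the approach the paper sketches (it only says ``a verification theorem can be easily done'') and which mirrors the general verification theorem of Section~\ref{sec:verification}. You also correctly isolate the genuine subtlety: with $\lambda=0$, $\tau_d=\infty$, so $\tau_b$ may be infinite and one must control the contribution of $\{\tau_b=\infty\}$ --- an issue that the paper's general verification theorem (which relies on $\tau_d<\infty$ a.s.) never encounters. That diagnosis is correct and is the real content hiding behind the paper's one-liner.

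Two points should be corrected. First, the parenthetical claim that ``Isaacs does not fail when $\lambda=0$, thanks to convexity of $\phi$'' is false. Convexity of $\phi=e^{\varepsilon g}$ does \emph{not} imply convexity of $g$; one can check from the explicit formula that $g''(b)<0$ whenever $\varepsilon>\ln(1+R/r)$, in which case the paper's Remark 2.2 shows $\sup_\theta\inf_\pi=-\infty<\inf_\pi\sup_\theta$, i.e.\ Isaacs fails and $(\varpi,\vartheta)$ is \emph{not} a saddle point of the Hamiltonian. This is harmless for your argument, because you never use interchangeability: both one-sided bounds rest only on $\sup_\theta H(\varpi(w),\theta)=0$ and $\sup_\theta H(\pi,\theta)\ge 0$. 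But the sentence should be deleted. Second, in the lower bound the best response must be switched off after ruin: set $\bftheta^\ast_t:=\sigma\varepsilon\bfpi_t g'(W_t)\,1_{\{t<\tau_b\}}$. With your definition, an adversarial $\bfpi$ may drive $W$ back into $(b,w_s)$ after $\tau_b$, giving $\bftheta^\ast\neq 0$ there, so $h^d(\mathbb Q^\ast|\mathbb P)=\tfrac12\mathbb E^{\mathbb Q^\ast}\!\int_0^\infty(\bftheta^\ast)^2>\tfrac12\mathbb E^{\mathbb Q^\ast}\!\int_0^{\tau_b}(\bftheta^\ast)^2$, and the It\^o bound $g(w)\le\mathbb Q^\ast(\tau_b<\infty)-\tfrac1{2\varepsilon}\mathbb E^{\mathbb Q^\ast}\!\int_0^{\tau_b}(\bftheta^\ast)^2$ can no longer be converted into $g(w)\le\mathbb Q^\ast(\tau_b<\infty)-\tfrac1{\varepsilon}h^d(\mathbb Q^\ast|\mathbb P)$. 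The paper's general proof handles exactly this via condition (v) and the class $\mathscr A_0$; you need the analogous truncation.

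Finally, ``force the diffusion coefficient $\sigma\bfpi_t$ to vanish asymptotically'' overstates what is true for arbitrary admissible $\bfpi$. On $\{\tau_b=\infty,\ L>0\}$ one only obtains $\int_0^\infty\bfpi_s^2\,ds<\infty$ a.s.\ (from convergence of the martingale part of $g(W)$ together with $g'$ bounded away from $0$ near $w^\ast$); combined with $rW_s-c\to rw^\ast-c<0$ and the Cauchy--Schwarz estimate $\int_0^t(\mu-r)\bfpi_s\,ds=O(\sqrt t)$, the finite-variation part of $W$ diverges to $-\infty$, contradicting $W_t\to w^\ast$. The conclusion is right, but the mechanism is the square-integrability, not pointwise vanishing. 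With these repairs your argument is complete and faithful to the intended (but unwritten) proof.
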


One observation is the loss of convexity of the value function compared with the non-robust case. This is caused by the nonlinear term $\varepsilon (\psi')^2$. When $\varepsilon$ is zero, $\psi''$ must be non-negative (in fact, strictly positive if $\psi'\neq 0$) for the Hamiltonian in \eqref{HJB} to be finite. When $\varepsilon$ is nonzero, $\psi''$ is allowed to take negative values as long as $\varepsilon (\psi')^2+\psi''$ is non-negative. The larger the $\varepsilon$, the more concave the value function could potentially be. Another interesting feature is that when hazard rate is zero, the pre-ruin optimal investment policy is independent of both the ambiguity aversion parameter $\varepsilon$ and the ruin level $b$. Also, we see that for $w\in(b,w_s)$, 
$\lim_{\varepsilon\rightarrow \infty}\vartheta(w)=-\frac{2\sigma(R+r)}{\mu-r}.$ In terms of the optimally distorted Sharpe ratio, we have
\[\lim_{\varepsilon\rightarrow \infty}\left(\frac{\mu-r}{\sigma}+\vartheta(w)\right)=-\frac{2\sigma r}{\mu-r}. \]
Figure \ref{lambda0} shows plots for the robust ruin probability $\mathfrak{p}$ and the optimally distorted Sharpe ratio $\frac{\mu-r}{\sigma}+\vartheta$ with parameters $c=1$, $b=1$, $r=0.02$, $\mu=0.1$, $\sigma=0.15$ and $\varepsilon=0,1,5,10,50$.
We leave out the plot for $\varpi$ since it is a simple downward sloping linear function, and is independent of $\varepsilon$. It is worth mentioning that $\varpi\geq \pi_{0}$, i.e. the individual adopts a more aggressive investment strategy when life is perpetual.

\begin{figure}[h]
\centering
\includegraphics[height=6cm]{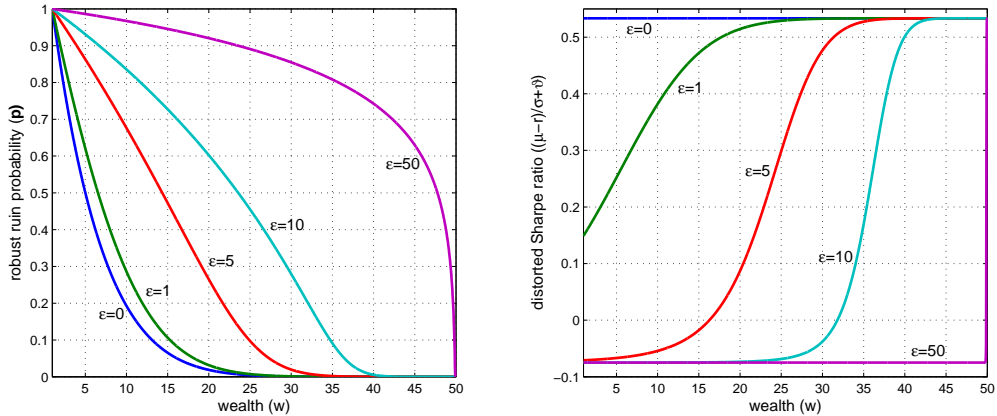}
\caption{Robust ruin probability and optimally distorted Sharpe ratio when $\lambda=0$.}
\label{lambda0}
\end{figure}

Before we move on to the general case, let us make one more remark regarding differentiability at the safe level.
\begin{remark}\label{rmk_zero_derivative_at_ws}
From the explicit formula for $\mathfrak{p}$, we see that $\mathfrak{p}$ has zero derivative at the safe level. Since $\mathfrak{p}$ bounds any general $\psi$ from above, this property is also shared by $\psi$. Indeed,
\[0\geq\lim_{w\rightarrow w_s-}\frac{\psi(w)}{w-w_s}\geq\lim_{w\rightarrow w_s-}\frac{\mathfrak{p}(w)}{w-w_s}=\mathfrak{p}'(w_s)=0.\]
\end{remark}

\section{Viscosity Solution and Perron's Method}\label{sec:viscosity_soln}

Our goal in this section is to show the nonlinear degenerate elliptic Dirichlet problem
\begin{subequations}
\label{P1}
\begin{align}
&F(w,u,u',u'')=0,\label{P1:a}\\
&u(b)=1, \ u(w_s)=0, \label{P1:b}
\end{align}
\end{subequations}
where
\begin{align*}
F(w,u,u',u''):=\lambda u-\inf_{\pi}\left\{\frac{1}{2}\sigma^2\left(\varepsilon (u')^2+u''\right)\pi^2+(\mu-r)u'\pi+\left(rw-c\right)u'\right\}
\end{align*}
has a unique viscosity solution satisfying certain properties. Notice that $F$ can be written as the supremum of a family of continuous functions, hence is lower semi-continuous (l.s.c.). 

We first prove a comparison principle for \eqref{P1} which implies uniqueness. The proof is a slight modification of the classical comparison argument to take care of the unboundedness of the control space. It turns out, luckily, that the nonlinear term $\varepsilon(u')^2$ does not add any difficulty.

\begin{prop}\label{comparisonP1}
Let $u, v$ be an upper semi-continuous (u.s.c.) viscosity subsolution and a l.s.c. viscosity supersolution of $F=0$, respectively. Suppose $u, v$ are bounded, and either $v\geq 0$ or $u>0$ in $(b,w_s)$. If $u\leq v$ on $\partial (b,w_s)$, then $u\leq v$ on $[b,w_s]$.
\end{prop}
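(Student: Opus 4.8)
The plan is to run the classical doubling-of-variables proof of comparison, modified only to cope with the unbounded control set $\pi\in\mathbb{R}$. Suppose, for contradiction, that $M:=\max_{[b,w_s]}(u-v)>0$. Since $u-v$ is u.s.c.\ on the compact interval $[b,w_s]$ and $u\le v$ on $\{b,w_s\}$, this maximum is attained at an interior point. For $\alpha>0$ I would set $\Phi_\alpha(w,y):=u(w)-v(y)-\tfrac{\alpha}{2}(w-y)^2$ on $[b,w_s]^2$, take a maximizer $(w_\alpha,y_\alpha)$, and invoke the standard lemma (\cite[Lemma~3.1]{UsersGuide}): $\alpha(w_\alpha-y_\alpha)^2\to0$, $\Phi_\alpha(w_\alpha,y_\alpha)\to M$, and every limit point of $(w_\alpha,y_\alpha)$ sits on the diagonal at an interior maximizer of $u-v$, so $w_\alpha,y_\alpha\in(b,w_s)$ for $\alpha$ large. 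The Crandall--Ishii lemma then supplies real numbers $X\le Y$ with $(p_\alpha,X)\in\overline J^{2,+}u(w_\alpha)$ and $(p_\alpha,Y)\in\overline J^{2,-}v(y_\alpha)$, where $p_\alpha:=\alpha(w_\alpha-y_\alpha)$.

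Next I would feed these jets into the sub/supersolution inequalities. The subsolution inequality says $\lambda u(w_\alpha)\le\inf_\pi\{\tfrac12\sigma^2(\varepsilon p_\alpha^2+X)\pi^2+(\mu-r)p_\alpha\pi+(rw_\alpha-c)p_\alpha\}$. This is the one spot where the unbounded control has to be handled: the left side is finite (as $u$ is bounded), so the quadratic in $\pi$ cannot run off to $-\infty$, and since $\mu\ne r$ this forces $\varepsilon p_\alpha^2+X\ge0$, with equality only when $p_\alpha=0$ (and then $X=0$). In other words subsolutions are automatically coercive, which is precisely why the nonlinear term $\varepsilon(u')^2$ does no harm. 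On $\{\varepsilon p_\alpha^2+X>0\}$ the infimum is explicit; since $\varepsilon p_\alpha^2+Y\ge\varepsilon p_\alpha^2+X>0$ the supersolution side is attained as well, and subtracting the two minimized expressions, using $\tfrac{1}{\varepsilon p_\alpha^2+X}\ge\tfrac{1}{\varepsilon p_\alpha^2+Y}>0$ and $p_\alpha=\alpha(w_\alpha-y_\alpha)$, I would be left with
\[\lambda\big(u(w_\alpha)-v(y_\alpha)\big)\le r\,\alpha(w_\alpha-y_\alpha)^2.\]
The borderline sub-case $\varepsilon p_\alpha^2+X=0$, i.e.\ $p_\alpha=X=0$ and $Y\ge0$, is checked by hand and yields the same bound. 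Sending $\alpha\to\infty$ gives $\lambda M\le0$.

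When $\lambda>0$ this contradicts $M>0$ and the proof is complete. The hard part is the regime in which this zeroth-order coercivity is lost, and this is exactly what the hypothesis ``$v\ge0$ or $u>0$ in $(b,w_s)$'' is there to rescue: assuming, say, $u>0$ there, I would first replace $u$ by a strict subsolution produced by a small perturbation that exploits the sign $rw-c<0$ on $(b,w_s)$ (turning the drift term into a strict decrease), rerun the estimate, and then let the perturbation vanish; the alternative $v\ge0$ is symmetric. I expect this degenerate step, rather than the essentially free unbounded-control bookkeeping, to be the only genuinely delicate part of the argument.
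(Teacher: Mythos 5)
Your overall strategy — doubling of variables, Crandall--Ishii, then exploiting the fact that finiteness of the subsolution inequality forces $\varepsilon p_\alpha^2+X\ge 0$ — matches the paper's proof, and you correctly identify that the unbounded $\pi$-control is harmless because the subsolution is automatically coercive. However, you misdiagnose where and why the hypothesis ``$v\ge 0$ or $u>0$'' enters, and this leaves a real gap in your argument.

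The problem is the borderline sub-case $p_\alpha=X=0$, which you dismiss as ``checked by hand, yields the same bound.'' It does not. The supersolution inequality is only usable at jets $(p,Y)$ where $F$ is continuous, i.e.\ where $\varepsilon p^2+Y>0$. Crandall--Ishii gives $Y\ge X=0$, but does \emph{not} rule out $Y=0$; and at $(p,Y)=(0,0)$ the u.s.c.\ envelope $F^*$ is $+\infty$ (one can approach through $Y'<0$), so the supersolution relation is vacuous and provides no lower bound $\lambda v(y_\alpha)\ge 0$. Hence there is no subtraction to perform and no bound $\lambda\big(u(w_\alpha)-v(y_\alpha)\big)\le 0$ obtainable from the two inequalities alone. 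This borderline case is exactly where the hypothesis is needed: the subsolution inequality alone gives $\lambda u(w_\alpha)\le 0$, and then either $u>0$ makes this impossible, or $v\ge 0$ combined with $u(w_\alpha)\le 0$ contradicts $u(w_\alpha)-v(y_\alpha)\ge\delta>0$. Your proof, as written, claims the $\lambda>0$ case is closed \emph{before} the hypothesis is invoked, which is false.

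Consequently the final paragraph is off-track: the hypothesis is not a patch for a lost zeroth-order term (the proposition is only used for $\lambda>0$ in this paper; the $\lambda=0$ case is solved explicitly in Section~3), and the proposed strict-subsolution perturbation is both unnecessary and would not by itself remove the need for the sign hypothesis in the borderline jet case. The fix is simply to move the hypothesis to where it belongs: invoke it to discharge the case $p_\alpha=X=0$, as the paper does.
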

\begin{proof}
Suppose that, on the contrary, $\delta:=\sup_{x\in(b,w_s)}(u-v)(x)>0$. $\delta<\infty$ since $u$ and $v$ are assumed to be bounded. By the upper semi-continuity of $u-v$, there exists $x^\ast\in(b,w_s)$ such that $u(x^\ast)-v(x^\ast)=\delta$. For every $\alpha>0$, define 
\[\Psi_\alpha(x,y):=u(x)-v(y)-\frac{\alpha}{2}|x-y|^2.\]
It is clear that $\sup_{x,y\in(b,w_s)}\Psi_{\alpha}(x,y)\geq \delta$ since we can always choose $x=y$. By the upper semi-continuity of $u(x)-v(y)$, there exists $\hat{x}_\alpha, \hat{y}_\alpha$ such that $\sup_{x,y\in(b,w_s)}\Psi_\alpha(x,y)=\Psi_\alpha(\hat{x}_\alpha,\hat{y}_\alpha)$. We have
\[u(x^\ast)-v(x^\ast)\leq u(\hat{x}_\alpha)-v(\hat{y}_\alpha)-\frac{\alpha}{2}|\hat{x}_\alpha-\hat{y}_{\alpha}|^2.\]
This implies
\begin{equation}\label{comparison_eq1}
\frac{\alpha}{2}|\hat{x}_\alpha-\hat{y}_{\alpha}|^2\leq u(\hat{x}_\alpha)-v(\hat{y}_\alpha)-(u(x^\ast)-v(x^\ast)).
\end{equation}
Since $[b,w_s]$ is compact, we can find a sequence $\alpha_n\rightarrow\infty$ such that $(\hat{x}_n, \hat{y}_n):=(\hat{x}_{\alpha_n}, \hat{y}_{\alpha_n})$ converges to $(\hat{x}, \hat{y})$ as $n\rightarrow \infty$.
Replacing $\alpha$ by $\alpha_n$ and letting $n\rightarrow\infty$ in \eqref{comparison_eq1}, we obtain
\begin{equation}\label{comparison_eq2}
\begin{aligned}
\limsup_n\frac{\alpha_n}{2}|\hat{x}_n-\hat{y}_n|^2&\leq \limsup_n (u(\hat{x}_n)-v(\hat{y}_n))-(u(x^\ast)-v(x^\ast))\\
&\leq u(\hat{x})-v(\hat{y})-(u(x^\ast)-v(x^\ast)),
\end{aligned}
\end{equation}
where the second inequality is due to the upper semi-continuity of $u(x)-v(y)$. Since the right hand side of \eqref{comparison_eq2} is finite and $\alpha_n\rightarrow \infty$, we must have $\hat{x}=\hat{y}$, and \eqref{comparison_eq2} yields
\begin{equation*}
\begin{aligned}
0\leq\limsup_n\frac{\alpha_n}{2}|\hat{x}_n-\hat{y}_n|^2
&\leq u(\hat{x})-v(\hat{x})-(u(x^\ast)-v(x^\ast))\leq 0,
\end{aligned}
\end{equation*}
which implies $u(\hat{x})-v(\hat{x})=u(x^\ast)-v(x^\ast)=\delta$, $\alpha_n|\hat{x}_n-\hat{y}_n|^2\rightarrow 0$ and
\begin{equation}\label{comparison_eq3}
\delta\leq \sup_{x,y\in(b,w_s)}\Psi_{\alpha_n}(x,y)=u(\hat{x}_n)-v(\hat{y}_n)-\frac{\alpha_n}{2}|\hat{x}_n-\hat{y}_n|^2\rightarrow u(x^\ast)-v(x^\ast)=\delta
\end{equation}
as $n\rightarrow \infty$. Now, since $u\leq v$ on $\partial (b,w_s)$, we must have $\hat{x}\in (b,w_s)$. So $\hat{x}_n, \hat{y}_n\in(b,w_s)$ for sufficiently large $n$. By Crandall-Ishii's lemma, we can find sequences $A_n, B_n$ satisfying $-3\alpha_n\leq A_n\leq B_n\leq 3\alpha_n$ and
\begin{equation*}
(\alpha_n(\hat{x}_n-\hat{y}_n), A_n)\in \bar{J}^{2,+}_{(b,w_s)}u(\hat{x}_n), \quad (\alpha_n(\hat{x}_n-\hat{y}_n), B_n)\in \bar{J}^{2,-}_{(b,w_s)}v(\hat{y}_n),
\end{equation*}
where $\bar{J}^{2,+}_{(b,w_s)}u(\hat{x}_n), \bar{J}^{2,-}_{(b,w_s)}v(\hat{y}_n)$ are the closure of the second order superjet and subjet, respectively. Since $u$ is a viscosity subsolution of $F=0$ and $F$ is l.s.c., we have by \cite[Proposition 6.11.i]{Touzi} that
\begin{equation}\label{comparison_eq4}
F(\hat{x}_n,u(\hat{x}_n),\alpha_n(\hat{x}_n-\hat{y}_n),A_n)\leq 0.
\end{equation}  
The finiteness of $F(\hat{x}_n,u(\hat{x}_n),\alpha_n(\hat{x}_n-\hat{y}_n),A_n)$ implies either $\varepsilon \alpha^2_n(\hat{x}_n-\hat{y}_n)^2+A_n>0$ or $\varepsilon \alpha^2_n(\hat{x}_n-\hat{y}_n)^2+A_n=\alpha_n(\hat{x}_n-\hat{y}_n)=0$. We consider each case separately.

Case 1. $\varepsilon \alpha^2_n(\hat{x}_n-\hat{y}_n)^2+A_n>0$. In this case, we also have $\varepsilon \alpha^2_n(\hat{x}_n-\hat{y}_n)^2+B_n>0$. Since $F(w,u,u',u'')$ is continuous in the region $\varepsilon (u')^2+u''>0$, the supersolution property of $v$ implies
\begin{equation*}
F(\hat{y}_n,v(\hat{y}_n),\alpha_n(\hat{x}_n-\hat{y}_n),B_n)\geq 0.
\end{equation*}  
(See \cite[Proposition 6.11.ii]{Touzi}.) So we have
\begin{equation}\label{comparison_eq4b}
F(\hat{x}_n,u(\hat{x}_n),\alpha_n(\hat{x}_n-\hat{y}_n),A_n)\leq 0\leq F(\hat{y}_n,v(\hat{y}_n),\alpha_n(\hat{x}_n-\hat{y}_n),B_n)<\infty.
\end{equation}  
Using the expression of $F$, we obtain from \eqref{comparison_eq3} and \eqref{comparison_eq4b} that
\begin{align*}
\lambda \delta\leq \lambda (u(\hat{x}_n)-v(\hat{y}_n))&=F(\hat{x}_n,u(\hat{x}_n),\alpha_n(\hat{x}_n-\hat{y}_n),A_n)-F(\hat{x}_n,v(\hat{y}_n),\alpha_n(\hat{x}_n-\hat{y}_n),A_n)\\
&\leq F(\hat{y}_n,v(\hat{y}_n),\alpha_n(\hat{x}_n-\hat{y}_n),B_n)-F(\hat{x}_n,v(\hat{y}_n),\alpha_n(\hat{x}_n-\hat{y}_n),A_n)\\
&=\frac{R\alpha_n^2(\hat{x}_n-\hat{y}_n)^2(A_n-B_n)}{[\varepsilon\alpha_n^2(\hat{x}_n-\hat{y}_n)^2+B_n][\varepsilon\alpha_n^2(\hat{x}_n-\hat{y}_n)^2+A_n]}+r\alpha_n(\hat{x}_n-\hat{y}_n)^2\\
&\leq r\alpha_n(\hat{x}_n-\hat{y}_n)^2.
\end{align*}
Letting $n\rightarrow \infty$, we arrive at the contradiction $\lambda\delta\leq 0$.

Case 2. $\varepsilon \alpha^2_n(\hat{x}_n-\hat{y}_n)^2+A_n=\alpha_n(\hat{x}_n-\hat{y}_n)=0$. In this case, 
Equation \eqref{comparison_eq4} reads
\begin{equation*}
\lambda u(\hat{x}_n)=F(\hat{x}_n,u(\hat{x}_n),\alpha_n(\hat{x}_n-\hat{y}_n),A_n)\leq 0.
\end{equation*}
If $u$ is strictly positive, this cannot happen. So assume we are in the case where $v$ is non-negative. But this implies $u(\hat{x}_n)-v(\hat{x}_n)\leq 0$, contradicting $u(\hat{x}_n)-v(\hat{y}_n)\geq \sup_{x,y\in(b,w_s)}\Psi_{\alpha_n}(x,y)\geq\delta$.
\end{proof}

\begin{cor}
There is at most one viscosity solution to the Dirichlet problem \eqref{P1} that is bounded, non-negative, and continuous at the boundary.
\end{cor}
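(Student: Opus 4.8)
The plan is to read this off directly from the comparison principle of Proposition~\ref{comparisonP1}. Let $u_1$ and $u_2$ be two viscosity solutions of the Dirichlet problem \eqref{P1} that are bounded, non-negative, and continuous up to the boundary. By definition a viscosity solution is simultaneously a viscosity subsolution and a viscosity supersolution; being a viscosity solution it is in particular continuous on $(b,w_s)$, and together with continuity at the boundary this gives $u_i\in C[b,w_s]$, so each $u_i$ is both u.s.c.\ and l.s.c. Moreover, since $u_1$ and $u_2$ solve \eqref{P1} they attain the prescribed data \eqref{P1:b}, and continuity at the boundary makes this hold pointwise, so $u_1=u_2$ on $\partial(b,w_s)$; in particular $u_1\leq u_2$ there.

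First I would apply Proposition~\ref{comparisonP1} with $u:=u_1$ in the role of the subsolution and $v:=u_2$ in the role of the supersolution. The boundedness hypothesis holds by assumption; the dichotomy ``$v\geq 0$ or $u>0$ in $(b,w_s)$'' is satisfied because $u_2=v\geq 0$ by non-negativity; and $u_1\leq u_2$ on $\partial(b,w_s)$ was just checked. The conclusion gives $u_1\leq u_2$ on $[b,w_s]$. Swapping the roles of $u_1$ and $u_2$ — legitimate since both are non-negative, so the alternative ``$v\geq 0$'' is again available for whichever solution is designated the supersolution — yields $u_2\leq u_1$ on $[b,w_s]$. Hence $u_1\equiv u_2$, which is the claimed uniqueness.

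There is essentially no obstacle here: the statement is a genuine corollary of Proposition~\ref{comparisonP1}, and the only point requiring a moment's care is verifying that the hypotheses of that proposition are met in both directions. The non-negativity assumption is precisely what makes the alternative condition ``$v\geq 0$'' hold no matter which of $u_1,u_2$ is taken as the supersolution, and the continuity-at-the-boundary assumption is used only to pass from the boundary data being attained in the (possibly relaxed) viscosity sense to the pointwise inequality $u_1\leq u_2$ on $\partial(b,w_s)$ needed to invoke comparison.
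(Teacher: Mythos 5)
Your proof is correct and is the intended argument: the paper states this as an unproved corollary of Proposition~\ref{comparisonP1}, and applying comparison twice with the roles of subsolution and supersolution swapped — each time discharging the dichotomy via the non-negativity of whichever function plays the supersolution — is exactly the standard route. The only point worth a footnote is your remark that ``being a viscosity solution it is in particular continuous on $(b,w_s)$''; this is true under the (Users' Guide) convention the paper adopts, where a viscosity solution is by definition a continuous function that is both a sub- and a supersolution, and if one instead worked with the more general envelope-based notion one would simply apply comparison to $u_1^\ast$ versus $(u_2)_\ast$ and to $u_2^\ast$ versus $(u_1)_\ast$, with the same conclusion.
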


\begin{lemma}\label{max_stability_of_subsoln}
Let $\mathcal{U}$ be a non-empty family of u.s.c. viscosity subsolutions of $F=0$. Define
\[\mathbf{u}(w)=\sup_{u\in\mathcal{U}} u(w).\]
Let $\mathbf{u}^\ast$ be the u.s.c. envelope of $\mathbf{u}$ and assume $\mathbf{u}^\ast(w)<\infty$ for $w\in(b,w_s)$. Then $\mathbf{u}^\ast$ is a viscosity subsolution of $F=0$.
\end{lemma}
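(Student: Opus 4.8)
\emph{Proposal.} The plan is to run the classical argument for stability of viscosity subsolutions under suprema (see \cite{UsersGuide}), the only wrinkle being that $F$ is merely lower semi-continuous rather than continuous; but this plays \emph{in our favor} when passing to the limit for a subsolution inequality. Fix $\varphi\in C^2$ and a point $w_0\in(b,w_s)$ at which $\mathbf{u}^\ast-\varphi$ has a local maximum (note the hypothesis $\mathbf{u}^\ast<\infty$ on $(b,w_s)$ makes all the $F$-evaluations below meaningful). By the usual reduction, replacing $\varphi(w)$ by $\varphi(w)+|w-w_0|^4$, we may assume this maximum is strict and is attained over a closed interval $\bar B:=[w_0-\rho,w_0+\rho]\subset(b,w_s)$; write $m:=(\mathbf{u}^\ast-\varphi)(w_0)$. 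We must show $F\big(w_0,\mathbf{u}^\ast(w_0),\varphi'(w_0),\varphi''(w_0)\big)\le 0$.

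First I would build an approximating sequence. By definition of the u.s.c.\ envelope there exist $p_n\to w_0$ with $\mathbf{u}(p_n)\to\mathbf{u}^\ast(w_0)$, and by definition of $\mathbf{u}$ as a pointwise supremum there exist $u_n\in\mathcal{U}$ with $u_n(p_n)\ge \mathbf{u}(p_n)-\tfrac1n$, hence $u_n(p_n)\to\mathbf{u}^\ast(w_0)$. Since $u_n$ is u.s.c.\ on the compact set $\bar B$, the function $u_n-\varphi$ attains its maximum over $\bar B$ at some point $q_n\in\bar B$.

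The key step is to show $q_n\to w_0$ and $u_n(q_n)\to\mathbf{u}^\ast(w_0)$. From $(u_n-\varphi)(q_n)\ge (u_n-\varphi)(p_n)=u_n(p_n)-\varphi(p_n)\to m$ we get $\liminf_n(u_n-\varphi)(q_n)\ge m$. Conversely, along any subsequence with $q_n\to q\in\bar B$, the bounds $u_n\le\mathbf{u}\le\mathbf{u}^\ast$ together with the upper semi-continuity of $\mathbf{u}^\ast$ give $\limsup_n(u_n-\varphi)(q_n)\le\mathbf{u}^\ast(q)-\varphi(q)=(\mathbf{u}^\ast-\varphi)(q)\le m$. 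Hence $(\mathbf{u}^\ast-\varphi)(q)=m$, so $q=w_0$ by strictness, $(u_n-\varphi)(q_n)\to m$, and therefore $u_n(q_n)=(u_n-\varphi)(q_n)+\varphi(q_n)\to m+\varphi(w_0)=\mathbf{u}^\ast(w_0)$; since every subsequence has a further subsequence with this behavior, the full sequences converge.

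To conclude, note that for $n$ large $q_n$ lies in the interior of $\bar B$, so it is a local maximum of $u_n-\varphi$; since $u_n$ is a viscosity subsolution and $F$ is l.s.c., the subsolution property extends to closed superjets (as in the proof of Proposition~\ref{comparisonP1}, via \cite[Proposition 6.11.i]{Touzi}), giving $F\big(q_n,u_n(q_n),\varphi'(q_n),\varphi''(q_n)\big)\le 0$. Letting $n\to\infty$ and using $q_n\to w_0$, $u_n(q_n)\to\mathbf{u}^\ast(w_0)$, the continuity of $\varphi'$ and $\varphi''$, and the lower semi-continuity of $F$, we obtain
\[
F\big(w_0,\mathbf{u}^\ast(w_0),\varphi'(w_0),\varphi''(w_0)\big)\le\liminf_{n\to\infty} F\big(q_n,u_n(q_n),\varphi'(q_n),\varphi''(q_n)\big)\le 0 ,
\]
which is exactly the required subsolution inequality. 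The main obstacle is the third step: one has to extract $q_n\to w_0$ and simultaneously control $u_n(q_n)$, and this is precisely where the strictness of the maximum and the upper semi-continuity of the envelope $\mathbf{u}^\ast$ enter. Everything else is bookkeeping, and the l.s.c.\ of $F$ is used only — favorably — in the final limit.
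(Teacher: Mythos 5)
Your proof is correct and is exactly the classical stability-under-supremum argument behind Lemma 4.2 of \cite{UsersGuide}, which the paper's proof simply cites with the remark that it carries over verbatim when $F$ is l.s.c.\ and $[-\infty,\infty]$-valued. You have in effect written out the details of that cited proof, correctly identifying that lower semi-continuity of $F$ only helps in the final limit step; no gap.
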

\begin{proof}
By Lemma 4.2 of \cite{UsersGuide}. Note that although their function $F$ is $\mathbb{R}$-valued, the proof works exactly the same way when $F$ is allowed to take $\infty$ as a value as long as it is l.s.c. which is satisfied in our case.
\end{proof}

Next, we use Perron's Method to show problem \eqref{P1} has a viscosity solution. We mimic the proof of \cite[Theorem 4.1]{UsersGuide}. To begin with, we need to find an (u.s.c.) viscosity subsolution whose l.s.c. envelope satisfies the boundary conditions \eqref{P1:b}, and a (l.s.c.) viscosity supersolution whose u.s.c. envelope satisfies the boundary conditions \eqref{P1:b}. Obviously, we should aim at those functions that bound the robust value function from below and above, and we have two natural candidates: $\psi_{0}$ and $\mathfrak{p}$.\footnote{We can also use $\psi_{\infty}$ as the upper bound.} Indeed,
\begin{align*}
F(w,\psi_{0}, \psi'_{0}, \psi''_{0})&=\lambda \psi_{0}-\inf_{\pi}\left\{\frac{1}{2}\sigma^2\left(\varepsilon (\psi'_{0})^2+\psi''_{0}\right)\pi^2+(\mu-r)\psi'_{0}\pi+\left(rw-c\right)\psi'_{0}\right\}\\
&=\lambda \psi_{0}+ \frac{R(\psi'_{0})^2}{\varepsilon(\psi'_{0})^2+\psi''_{0}}-\left(rw-c\right)\psi'_{0}\\
&\leq \lambda \psi_{0}+\frac{R(\psi'_{0})^2}{\psi''_{0}}-\left(rw-c\right)\psi'_{0}=0,
\end{align*}
where in the second equality we used $\psi''_{0}>0$ in $(b,w_s)$, and
\begin{align*}
F(w,\mathfrak{p}, \mathfrak{p}', \mathfrak{p}'')&=\lambda \mathfrak{p}-\inf_{\pi}\left\{\frac{1}{2}\sigma^2\left(\varepsilon (\mathfrak{p}')^2+\mathfrak{p}''\right)\pi^2+(\mu-r)\mathfrak{p}'\pi+\left(rw-c\right)\mathfrak{p}'\right\}\\
&=\lambda \mathfrak{p}\geq 0.
\end{align*}
\begin{remark}
If these natural candidates were not available, we could start with the constant subsolution $u\equiv 0$ (resp. supersolution $v\equiv 1$), and modify it near the ruin level (resp. safe level) by a construction similar to that on page 25 of \cite{UsersGuide} so that the boundary conditions are satisfied.
\end{remark}


\begin{prop}[Perron's method]
There exists a continuous viscosity solution to the Dirichlet problem \eqref{P1} that takes values in $[0,1]$. More precisely, it is bounded from below by $\psi_{0}$ and from above by $\mathfrak{p}$. 
\end{prop}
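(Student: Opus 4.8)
The plan is to follow the standard Perron's method recipe as presented in \cite{UsersGuide}, adapted to our lower-semicontinuous Hamiltonian $F$. Define
\[
\mathcal{S}:=\left\{u: u \text{ is a bounded u.s.c.\ viscosity subsolution of } F=0 \text{ on } (b,w_s),\ \psi_0\leq u\leq \mathfrak{p} \text{ on } [b,w_s]\right\},
\]
and set $W(w):=\sup_{u\in\mathcal{S}}u(w)$. The family $\mathcal{S}$ is non-empty because $\psi_0\in\mathcal{S}$ (we verified $F(w,\psi_0,\psi_0',\psi_0'')\le 0$ above, and $\psi_0$ is $C^2$ on $(b,w_s)$, hence a classical, a fortiori viscosity, subsolution). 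Since every $u\in\mathcal{S}$ is sandwiched between $\psi_0$ and $\mathfrak{p}$, which are both continuous and agree at the endpoints ($\psi_0(b)=\mathfrak{p}(b)=1$, $\psi_0(w_s)=\mathfrak{p}(w_s)=0$), we get $\psi_0\le W\le \mathfrak{p}$, so $W$ is bounded, takes values in $[0,1]$, and $W_*(b)=W^*(b)=1$, $W_*(w_s)=W^*(w_s)=0$; in particular $W$ is continuous at the boundary with the correct boundary values.

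The next step is to show $W^*$ (the u.s.c.\ envelope) is a subsolution and $W_*$ (the l.s.c.\ envelope) is a supersolution. The subsolution property of $W^*$ is exactly Lemma~\ref{max_stability_of_subsoln} applied to $\mathcal{U}=\mathcal{S}$, using $W^*\le \mathfrak{p}^*=\mathfrak{p}<\infty$ on $(b,w_s)$. For the supersolution property of $W_*$, I argue by contradiction following \cite[Theorem 4.1]{UsersGuide}: if $W_*$ fails to be a supersolution at some $\hat w\in(b,w_s)$, one can perform the usual local bump construction — add a small quadratic perturbation $u_\gamma(w)=W(w)+\gamma(\rho^2-|w-\hat w|^2)$ supported in a tiny ball $B_\rho(\hat w)$ — to obtain a new subsolution strictly larger than $W$ at a point near $\hat w$, contradicting maximality of $W$. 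One must check that the bumped function is still $\le \mathfrak{p}$: this uses that $\mathfrak{p}$ is a strict supersolution in the sense that $F(w,\mathfrak{p},\mathfrak{p}',\mathfrak{p}'')=\lambda\mathfrak{p}\ge \lambda\psi_0>0$ on $(b,w_s)$ (strictly positive on any compact subinterval not touching $w_s$), so if $W_*(\hat w)=\mathfrak{p}(\hat w)$ then $W_*$ would already satisfy the supersolution inequality there by comparison of jets with $\mathfrak{p}$, a contradiction; hence $W_*(\hat w)<\mathfrak{p}(\hat w)$ and the bump stays below $\mathfrak{p}$ for $\gamma,\rho$ small. Symmetrically, near the failure point $W$ stays above $\psi_0$ automatically since $W\ge\psi_0$ and we are only adding a non-negative bump.

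Finally, having $W^*$ a subsolution and $W_*$ a supersolution, both bounded, both non-negative, with $W^*\le W_*$ on $\partial(b,w_s)$ (indeed equality, value $1$ and $0$), the comparison principle Proposition~\ref{comparisonP1} (with the ``$v\ge 0$'' alternative, since $W_*\ge\psi_0\ge 0$) yields $W^*\le W_*$ on $[b,w_s]$. Combined with the trivial inequality $W_*\le W\le W^*$, this forces $W_*=W=W^*$, so $W$ is continuous and is simultaneously a sub- and supersolution, i.e.\ a (continuous) viscosity solution of \eqref{P1}, with $\psi_0\le W\le\mathfrak{p}$. I expect the main obstacle to be the bump-construction step: one has to verify carefully that the perturbed function remains a viscosity subsolution across the boundary of the bump's support (where it coincides with $W$), which is routine but requires the standard ``max of two subsolutions'' argument, and one has to handle the fact that $F$ is only l.s.c.\ and can take the value $+\infty$ when $\varepsilon(u')^2+u''\le 0$ — though the bump adds $-2\gamma$ to the second derivative, so one checks that $F$ evaluated on the relevant jets stays finite by choosing $\gamma$ small relative to the local size of $\varepsilon(W')^2+W''$, exactly as in \cite{Janecek12}.
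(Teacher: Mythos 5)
Your overall strategy matches the paper's exactly: define the Perron envelope of u.s.c.\ subsolutions trapped between $\psi_0$ and $\mathfrak{p}$, use the max-stability lemma for the subsolution property of $W^*$, prove the supersolution property of $W_*$ by contradiction via a bump construction, and then invoke Proposition~\ref{comparisonP1} to collapse $W^*\le W_*$. However, the bump construction step as written contains a genuine error. You propose $u_\gamma(w)=W(w)+\gamma(\rho^2-|w-\hat w|^2)$, i.e.\ adding a quadratic bump directly to $W$. Since $W$ is only known to be semicontinuous (not $C^2$), this $u_\gamma$ has no reason to be a viscosity subsolution, and the phrase ``the bump adds $-2\gamma$ to the second derivative'' is meaningless because $W$ has no second derivative to add to. The correct construction, which is what the paper does, starts from the smooth test function $\varphi$ witnessing the failure of the supersolution inequality for $W_*$ at $w_0$, perturbs $\varphi$ (the paper uses $\varphi_\gamma=\varphi+\gamma$; the standard quadratic variant $\varphi+\delta-\gamma|w-w_0|^2$ from \cite{UsersGuide} also works), shows this is a classical subsolution in a small ball by continuity of $F$ in the region $\varepsilon(u')^2+u''>0$, and then glues it in via $U=\max(W^*,\varphi_\gamma)$ on the ball, $U=W^*$ outside.

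A second, related gap: you do not carry out the case analysis that the paper needs because $F$ is only l.s.c.\ and $+\infty$ off the ``good'' region. The finiteness $F(w_0,\varphi(w_0),\varphi'(w_0),\varphi''(w_0))<0$ forces either $\varepsilon(\varphi'(w_0))^2+\varphi''(w_0)>0$ (in which case the continuity/bump argument applies) or $\varphi'(w_0)=\varphi''(w_0)=0$ (in which case $F=\lambda\varphi(w_0)<0$ forces $\varphi(w_0)=W_*(w_0)<0$, which contradicts $W_*\ge\psi_0\ge0$). Your substitute --- ``choose $\gamma$ small relative to the local size of $\varepsilon(W')^2+W''$'' --- again presupposes derivatives of $W$ that need not exist. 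Finally, while your jet argument for why the bumped function stays below $\mathfrak{p}$ is correct in spirit, the paper's route (showing the glued function $U^*$ is a bounded subsolution and then applying Proposition~\ref{comparisonP1} to get $U^*\le\mathfrak{p}$) is both cleaner and already available once the lemmas are in place; it avoids any separate ad hoc comparison with $\mathfrak{p}$.
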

\begin{proof}
Let $\lbar{u}=\psi_0$ and $v=\mathfrak{p}$. Both are $[0,1]$-valued continuous functions. Define
\begin{equation}\label{Perron_def_bfu}
\mathbf{u}(w):=\sup\{u(w): \lbar{u}\leq u\leq v \text{ and $u$ is an u.s.c. subsolution of } F=0\}.
\end{equation}
For any function $u$, denote by $u^\ast$ and $u_\ast$ its u.s.c. envelope and l.s.c. envelope, respectively.
We have $\lbar{u}=\lbar{u}_\ast\leq \mathbf{u}_\ast\leq\mathbf{u}\leq \mathbf{u}^\ast \leq v^\ast=v$. Since $\lbar{u}$ and $v$ agree on the boundary, we know $\mathbf{u}$ is continuous at the boundary and satisfies the boundary condition \eqref{P1:b}. Since $\mathbf{u}^\ast\leq v<\infty$, Lemma \ref{max_stability_of_subsoln} implies $\mathbf{u}^\ast$ is a viscosity subsolution of $F=0$. If we can show $\mathbf{u}_\ast$ is a viscosity supersolution of $F=0$, we can then apply comparison principle to get $\mathbf{u}^\ast\leq \mathbf{u}_\ast$, and conclude that $\mathbf{u}$ is a continuous viscosity solution to the Dirichlet problem \eqref{P1}. The rest is devoted to the proof of the supersolution property of $\mathbf{u}_\ast$.

Suppose $\mathbf{u}_\ast$ is not a viscosity supersolution of $F=0$. Then there exists $w_0\in(b,w_s)$ and $\varphi\in C^2(b,w_s)$ such that $\mathbf{u}_\ast-\varphi$ has a strict minimum zero at $w_0$ and $F(w_0, \varphi(w_0),\varphi'(w_0), \varphi''(w_0))<0$. Here $F<\infty$ implies either $\varepsilon(\varphi'(w_0))^2+\varphi''(w_0)>0$ or $\varphi''(w_0)=\varphi'(w_0)=0$. In the latter case, we get $\mathbf{u}_\ast(w_0)=\varphi(w_0)<0$ which cannot happen because $\mathbf{u}_\ast\geq \lbar{u}\geq 0$. So we are in the former case. By continuity of $F$ in the region $\varepsilon(u')^2+u''>0$, there exists $\delta,\gamma>0$ such that $F(w,\varphi(w)+\gamma,\varphi'(w),\varphi''(w))<0$ for all $w\in B_{\delta}(w_0)\subset \ubar{B}_{\delta}(w_0)\subset (b,w_s)$. Let $\varphi_\gamma(w):=\varphi(w)+\gamma$. Then $\varphi_\gamma$ is a classical subsolution of $F=0$ in $B_{\delta}(w_0)$. Since $\mathbf{u}_\ast>\varphi$ in $(b,w_s)\backslash \{w_0\}$, we can choose $\gamma$ small so that $\mathbf{u}_\ast>\varphi+\gamma=\varphi_\gamma$ on $\partial B_{\delta}(w_0)$. Define
\[U:=\begin{cases} \mathbf{u}^\ast\vee \varphi_\gamma &\text{in } B_{\delta}(w_0),\\
\mathbf{u}^\ast & \text{otherwise.}
\end{cases}\]
Since $\mathbf{u}^\ast<\infty$ and $\varphi_\gamma\leq\mathbf{u}_\ast+\gamma<\infty$ in $B_{\delta}(w_0)$, by Lemma \ref{max_stability_of_subsoln}, $U^\ast$ is a viscosity subsolution of $F=0$. Since $U^\ast=\mathbf{u}^\ast\leq v$ on $\partial(b,w_s)$, comparison principle (Proposition \ref{comparisonP1}) implies $U^\ast\leq v$ on $[b,w_s]$. So $U^\ast$ belongs to the set on the right hand side of \eqref{Perron_def_bfu}, and thus $\mathbf{u}^\ast\leq U\leq U^\ast\leq \mathbf{u} \leq\mathbf{u}^\ast$, where the second last inequality is due to the maximality of $\mathbf{u}$. Therefore, we obtain $U=\mathbf{u}^\ast$. 

On the other hand, by the definition of the semi-continuous envelope, there exists a sequence $(w_n)\subset B_{\delta}(w_0)$ such that $w_n\rightarrow w_0$ and $\mathbf{u}^\ast(w_n)\rightarrow \mathbf{u}_\ast(w_0)$. It follows that
$\varphi_\gamma(w_n)-\mathbf{u}^\ast(w_n)=\varphi(w_n)+\gamma-\mathbf{u}^\ast(w_n)\rightarrow \gamma>0$. So for $n$ sufficiently large, $U(w_n)=\varphi_\gamma(w_n)>\mathbf{u}^\ast(w_n)+\gamma/2$. We get a contradiction. This completes the proof that $\mathbf{u}_\ast$ is a viscosity supersolution of $F=0$. 
\end{proof}

Up to this point, we have established the existence and uniqueness of a continuous viscosity solution to the Dirichlet problem \eqref{P1}. Denote this solution by $\hat{u}$. We have $\psi_{0}\leq \hat{u}\leq \mathfrak{p}$. The next goal is to upgrade regularity.

\section{Regularity}\label{sec:regularity}

One difficulty of directly proving regularity for problem \eqref{P1} is the lack of convexity of $\hat{u}$ caused by the nonlinear term $\varepsilon(u')^2$. Motivated by how we solved the $\lambda=0$ case, we use the Cole-Hopf transformation $v=e^{\varepsilon u}$ to obtain an equivalent convex problem:
\begin{subequations}
\label{P2}
\begin{align}
&G(w,v,v',v'')=0,\label{P2:a}\\
& v(b)=e^{\varepsilon}, \ v(w_s)=1.\label{P2:b}
\end{align}
\end{subequations}
where
\[G(w,v,v',v''):=\lambda v\ln v-\inf_{\pi}\left\{\frac{1}{2}\sigma^2v''\pi^2+(\mu-r)v'\pi+\left(rw-c\right)v'\right\}.\]
The solution to the transformed problem is expected to be convex, otherwise $G$ would explode. Although \eqref{P2:a} is only understood in viscosity sense for now, one can expect, intuitively, if at every interior point, every test function above the viscosity solution (for the subsolution property) is convex in a neighborhood of that point, then the viscosity solution should be convex as well.

Since we already have a continuous viscosity solution $\hat{u}$ of problem \eqref{P1}, it can be easily verified that $\hat{v}:=e^{\varepsilon \hat{u}}$ is a continuous viscosity solution of problem \eqref{P2} satisfying $e^{\varepsilon \psi_{0}}\leq\hat{v}\leq e^{\varepsilon \mathfrak{p}}$. Moreover, the comparison principle for \eqref{P1} immediately yields a comparison principle for \eqref{P2}. We summarize these results in the following two lemmas. 

\begin{lemma}\label{comparisonP2}
Let $u, v$ be strictly positive u.s.c. viscosity subsolution and l.s.c. viscosity supersolution of \eqref{P2:a}, respectively. Suppose $u, v$ are bounded and bounded away from zero and either $u>1$ or $v\geq 1$ in $(b,w_s)$. If $u\leq v$ on $\partial (b,w_s)$, then $u\leq v$ on $[b,w_s]$.
\end{lemma}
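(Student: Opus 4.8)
The plan is to deduce the comparison principle for \eqref{P2} from Proposition \ref{comparisonP1} via the Cole--Hopf change of variables, using that $x\mapsto\frac1\varepsilon\ln x$ is a $C^\infty$, strictly increasing bijection from $(0,\infty)$ onto $\mathbb{R}$.

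The computational heart is the pointwise identity: if $v=e^{\varepsilon u}$ then $v'=\varepsilon v u'$ and $v''=\varepsilon v(\varepsilon(u')^2+u'')$, and since $\ln v=\varepsilon u$, substituting into the definition of $G$ and factoring out $\varepsilon v>0$ from the infimum gives
\[
G(w,v,v',v'')=\varepsilon v\, F(w,u,u',u'').
\]
Because $v>0$ and $\varepsilon>0$, the sign of $G$ at a point equals that of $F$ at the transformed point, so the equation and both one-sided inequalities are preserved. I would then phrase this in viscosity terms: given a strictly positive u.s.c. subsolution $u$ of $G=0$, set $\tilde u:=\frac1\varepsilon\ln u$; for any $C^2$ function $\varphi$ with $\tilde u-\varphi$ attaining a local maximum $0$ at an interior $w_0$, the function $\phi:=e^{\varepsilon\varphi}$ is $C^2$, touches $u$ from above at $w_0$ (monotonicity of $e^{\varepsilon\cdot}$), and $\phi(w_0)=u(w_0)>0$; the subsolution property of $u$ at $w_0$ together with the identity forces $F(w_0,\varphi(w_0),\varphi'(w_0),\varphi''(w_0))\le0$, so $\tilde u$ is an u.s.c. viscosity subsolution of $F=0$. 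The mirror-image argument shows $\tilde v:=\frac1\varepsilon\ln v$ is a l.s.c. viscosity supersolution of $F=0$.

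It remains to check that $\tilde u,\tilde v$ meet the hypotheses of Proposition \ref{comparisonP1}. Upper/lower semicontinuity is inherited because $\frac1\varepsilon\ln$ is continuous and increasing; $\tilde u,\tilde v$ are bounded since $u,v$ are bounded and bounded away from zero; the dichotomy ``$u>1$ or $v\ge1$ in $(b,w_s)$'' is exactly ``$\tilde u>0$ or $\tilde v\ge0$ in $(b,w_s)$'' because $\frac1\varepsilon\ln1=0$; and $u\le v$ on $\partial(b,w_s)$ gives $\tilde u\le\tilde v$ there. Proposition \ref{comparisonP1} then yields $\tilde u\le\tilde v$ on $[b,w_s]$, and applying the increasing map $e^{\varepsilon\cdot}$ recovers $u\le v$ on $[b,w_s]$.

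I do not expect a genuine obstacle: the only care needed is in the viscosity reformulation of the change of variables --- ensuring test functions transform as stated and that strict positivity of $v$ is what licenses passing from $G\le0$ to $F\le0$ --- and in the bookkeeping that each hypothesis of Proposition \ref{comparisonP1} is implied by the corresponding one on $u,v$. A fully self-contained alternative would be to rerun the doubling-of-variables argument of Proposition \ref{comparisonP1} directly on $G$, but the reduction above is shorter and reuses the comparison principle already established.
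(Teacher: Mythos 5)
Your proof is correct and follows the same route the paper takes: the paper's proof of Lemma~\ref{comparisonP2} is exactly the reduction to Proposition~\ref{comparisonP1} via the Cole--Hopf transform, stating only "it is easy to check that $\frac{1}{\varepsilon}\ln u$ (resp.\ $\frac{1}{\varepsilon}\ln v$) is an u.s.c.\ subsolution (resp.\ l.s.c.\ supersolution) of \eqref{P1} satisfying all assumptions." Your write-up supplies the details the paper leaves implicit --- the identity $G(w,v,v',v'')=\varepsilon v\,F(w,u,u',u'')$, the test-function bookkeeping, and the matching of hypotheses --- but the underlying argument is the same.
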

\begin{proof}
It is easy to check $\frac{1}{\varepsilon} \ln u$ (resp. $\frac{1}{\varepsilon} \ln v$) is an u.s.c. subsolution (resp. a l.s.c. supersolution) of \eqref{P1} satisfying all assumptions of Proposition \ref{comparisonP1}.
\end{proof}

\begin{lemma}\label{uniqueness_existence_P2}
$\hat{v}:=e^{\varepsilon \hat{u}}$ is the unique (continuous) viscosity solution to the Dirichlet problem \eqref{P2} among all viscosity solutions that are bounded, continuous at the boundary and satisfy $v\geq 1$ in $(b,w_s)$. Moreover, $e^{\varepsilon \psi_0}\leq\hat{v}\leq e^{\varepsilon \mathfrak{p}}$.
\end{lemma}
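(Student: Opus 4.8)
The plan is to transfer both existence and uniqueness from problem~\eqref{P1} to problem~\eqref{P2} through the Cole--Hopf change of variables $v=e^{\varepsilon u}$, $u=\tfrac1\varepsilon\ln v$, which is a smooth strictly increasing bijection on the relevant ranges; this is the same correspondence already used to prove the comparison principle of Lemma~\ref{comparisonP2}, now read in both directions, so the argument is short.

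The underlying computation is the pointwise algebraic identity linking $F$ and $G$. For $u\in C^2$ and $v=e^{\varepsilon u}>0$ one has $v'=\varepsilon u'v$ and $v''=\varepsilon v(u''+\varepsilon(u')^2)$, hence $\varepsilon(u')^2+u''=v''/(\varepsilon v)$ and $u'=v'/(\varepsilon v)$; substituting into the Hamiltonian, pulling out the factor $1/(\varepsilon v)>0$, and using $v\ln v=\varepsilon u\,v$, one obtains $G(w,v,v',v'')=\varepsilon v\,F(w,u,u',u'')$. In particular $G$ is l.s.c.\ (a supremum of continuous functions, just as $F$ is), $G$ is finite exactly on the branch $v''>0$ together with the degenerate branch $v''=v'=0$, and $G=0\Leftrightarrow F=0$ with matching signs since $\varepsilon v>0$. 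Combining this identity with the chain rule for test functions --- if $\varphi\in C^2$ touches $e^{\varepsilon u}$ from above (resp.\ below) at an interior point, then $\tfrac1\varepsilon\ln\varphi$ touches $u$ from above (resp.\ below) there, and conversely --- gives that $\hat v:=e^{\varepsilon\hat u}$ is a continuous viscosity solution of \eqref{P2} with the boundary values $e^{\varepsilon}$ at $b$ and $1$ at $w_s$ inherited from those of $\hat u$, and, by monotonicity of $\exp$ together with $\psi_0\le\hat u\le\mathfrak p$, that $e^{\varepsilon\psi_0}\le\hat v\le e^{\varepsilon\mathfrak p}$. In particular $\hat v\ge e^{\varepsilon\psi_0}\ge1$ in $(b,w_s)$, so $\hat v$ lies in the asserted uniqueness class. (This is the easy verification already announced in the text preceding the lemma.)

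For uniqueness I would argue as follows. Let $v_1,v_2$ be two bounded viscosity solutions of \eqref{P2} that are continuous up to $\partial(b,w_s)$ and satisfy $v_i\ge1$ in $(b,w_s)$. Then each $v_i$ is strictly positive, bounded, bounded away from zero, and attains the boundary data $e^{\varepsilon},1$; applying the comparison principle of Lemma~\ref{comparisonP2} to the ordered pair $(v_1,v_2)$ --- the alternative hypothesis $v_2\ge1$ being in force --- gives $v_1\le v_2$ on $[b,w_s]$, and by symmetry $v_2\le v_1$, so $v_1=v_2$, necessarily equal to $\hat v$.

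I do not expect a genuine obstacle. The only point that needs care is the bookkeeping above: confirming that the viscosity sub/supersolution inequalities really do survive the Cole--Hopf transformation, including on the degenerate branch of $F$ (equivalently of $G$) where the second-order test-jet component vanishes. Since Lemma~\ref{comparisonP2} was obtained by exactly this $\tfrac1\varepsilon\ln(\cdot)$ correspondence, the forward direction needed here follows from the same reasoning.
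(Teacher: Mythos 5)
Your proof is correct and follows essentially the same route the paper takes: the paper dispatches this lemma with a sentence asserting that the Cole--Hopf conjugation transfers the solution and the comparison principle, and your argument simply fills in the details of that assertion (the pointwise identity $G(w,v,v',v'')=\varepsilon v\,F(w,u,u',u'')$, the test-function chain rule for the viscosity properties, monotonicity of $\exp$ for the bounds, and two applications of Lemma~\ref{comparisonP2} for uniqueness).
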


We now establish the convexity and monotonicity of $\hat{v}$.

\begin{lemma}\label{vhat_convex_monotone}
$\hat{v}$ is strictly convex and strictly decreasing on $[b,w_s]$.
\end{lemma}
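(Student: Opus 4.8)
The plan is to read convexity off directly from the viscosity subsolution property of $\hat v$ for $G=0$, exploiting the degeneracy of $G$, and then to obtain monotonicity as a soft consequence of convexity together with the boundary data $\hat v(b)=e^{\varepsilon}$, $\hat v(w_s)=1$ and the lower bound $\hat v\ge e^{\varepsilon\psi_{0}}$ from Lemma \ref{uniqueness_existence_P2}. Three elementary facts will be used repeatedly. First, since $\psi_{0}(w)=\big(\tfrac{c-rw}{c-rb}\big)^{d}>0$ on $(b,w_s)$, we have $\hat v(w)\ge e^{\varepsilon\psi_{0}(w)}>1$ for every interior $w$, hence $\lambda\hat v(w)\ln\hat v(w)>0$ there. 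Second, carrying out the inner infimum shows that for $X>0$ one has $G(w,v,p,X)=\lambda v\ln v-(rw-c)p+\tfrac{Rp^{2}}{X}<\infty$, whereas $G(w,v,p,X)=+\infty$ whenever $X<0$, or $X=0$ and $p\neq0$, and $G(w,v,0,0)=\lambda v\ln v$. Third, combining the first two: if $\varphi\in C^{2}$ touches $\hat v$ from above at an interior point $w_{0}$, then the subsolution inequality $G(w_{0},\hat v(w_{0}),\varphi'(w_{0}),\varphi''(w_{0}))\le0$ together with the strict positivity of $\lambda\hat v(w_{0})\ln\hat v(w_{0})$ forces $\varphi''(w_{0})>0$.

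Granting this third fact, convexity of $\hat v$ on $[b,w_s]$ would follow by contradiction. If $\hat v$ were not convex, by continuity there are $w_{1}<w_{3}$ in $[b,w_s]$ on which $\hat v$ lies strictly above its chord $\ell$ somewhere; set $M:=\max_{[w_{1},w_{3}]}(\hat v-\ell)>0$, attained at some $w_{0}\in(w_{1},w_{3})\subset(b,w_s)$. Then $\varphi_{0}:=\ell+M$ is an affine ($C^{2}$) function with $\varphi_{0}\ge\hat v$ on $[w_{1},w_{3}]$ and $\varphi_{0}(w_{0})=\hat v(w_{0})$, so it is an admissible upper test function at the interior point $w_{0}$ with $\varphi_{0}''(w_{0})=0$ — contradicting the third fact. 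The same mechanism gives strict convexity: were $\hat v$ affine on some $[w_{1},w_{2}]\subset(b,w_s)$, then at any $w_{0}\in(w_{1},w_{2})$ the function $\hat v$ itself is a $C^{2}$ upper test function with $\hat v''(w_{0})=0$, again a contradiction; since a convex function with no affine piece on a nondegenerate subinterval is strictly convex, $\hat v$ is strictly convex on $[b,w_s]$.

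For monotonicity, I would note that a convex continuous $\hat v$ on the compact interval $[b,w_s]$ attains its minimum on a (possibly degenerate) subinterval of minimizers. No interior point $w^{*}\in(b,w_s)$ can be a minimizer, since that would give $\hat v(w^{*})\le\hat v(w_s)=1<\hat v(w^{*})$; and $w^{*}=b$ cannot be a minimizer, since that would give $e^{\varepsilon}=\hat v(b)\le\hat v(w_s)=1$, impossible for $\varepsilon>0$. Hence $w_s$ is the unique minimizer, so $\hat v$ is nonincreasing on $[b,w_s]$, and strict convexity (absence of flat pieces) upgrades this to strictly decreasing.

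I expect the convexity step to be the crux. What makes it short is that the very degeneracy and unboundedness of the control — which cause difficulty elsewhere in the paper — here work in our favor: $G\equiv+\infty$ on $\{v''<0\}$ (and on $\{v''=0,\ v'\neq0\}$), so the viscosity subsolution inequality alone, with no appeal to the supersolution property, already controls the sign of the second derivative of every upper test function; the strict sign, and hence both strict convexity and the exclusion of affine pieces, comes for free because $\hat v>1$ makes $\lambda\hat v\ln\hat v$ strictly positive in the interior.
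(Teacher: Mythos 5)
Your proof is correct, and it takes a genuinely different and somewhat more elementary route than the paper's. For non-strict convexity, the paper invokes Lemma~1 of Alvarez--Lasry--Lions (cited as \cite{Lion97}), which supplies a superjet element $(p,A)$ with $A<0$ at some interior point whenever the function is not convex, and then concludes from $G(w_0,\hat v(w_0),p,A)=+\infty$. You instead build the superjet element by hand from the chord, obtaining $(p,0)$; this forces you to also rule out the case $p=0$, which you do via the strict positivity of $\lambda\hat v\ln\hat v$ in the interior (from $\hat v\ge e^{\varepsilon\psi_0}>1$). That same positivity lets you prove \emph{strict} convexity before monotonicity, whereas the paper proves strict monotonicity first (so that the slope of any affine piece is nonzero) and then uses the unboundedness of $G(\cdot,\cdot,p,0)$ for $p\ne 0$. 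For monotonicity, the paper goes through $D^-\hat v(w_s)=0$ (inherited from $\mathfrak{p}'(w_s)=0$), which it reuses later in the $C^1$-regularity argument; your route — identifying $w_s$ as the unique minimizer from the boundary data $\hat v(b)=e^\varepsilon>1=\hat v(w_s)<\hat v$ in $(b,w_s)$, then reading nonincreasingness off the monotone one-sided derivatives of a convex function — is more direct for this lemma in isolation. Both arguments exploit the same structural fact, namely that $G(w,v,p,X)=+\infty$ on $\{X<0\}\cup\{X=0,\,p\ne 0\}$, and both ultimately trace convexity to the degeneracy and unboundedness of the control. The only stylistic caveat: in the strict-convexity step it is the affine extension of the linear piece, not $\hat v$ itself, that serves as the $C^2$ upper test function — the intent is clear but the phrasing should be tightened.
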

\begin{proof}
First, let us show (non-strict) interior convexity. Suppose $\hat{v}$ is not convex in $(b,w_s)$. Then by \cite[Lemma 1]{Lion97}, there exists $w_0\in(b,w_s)$ and $(p, A)\in J^{2,+}\hat{v}(w_0)$ with $A<0$. We therefore have $G(w_0, \hat{v}(w_0), p, A)=\infty$. But by the semi-jets formulation of viscosity solution (see e.g. \cite[Proposition 6.11.i]{Touzi}), we have $G(w_0, \hat{v}(w_0), p, A)\leq 0$. We get a contradiction. Since $\hat{v}$ is continuous, interior convexity can be extended to the boundary. 

The convexity of $\hat{v}$ implies its left and right derivatives $D^{\pm}\hat{v}$ exists (in $\mathbb{R}$ for interior points and in $\mathbb{R}\cup\{\pm\infty\}$ for boundary points) and are non-decreasing.\footnote{Here and in the sequel, at the left (resp. right) boundary point, $D^{\pm}\hat{v}$ only refers to the right (resp. left) derivative.} Since we have showed $0\leq\hat{u}\leq \mathfrak{p}$ and we know $\mathfrak{p}'_0(w_s)=0$, an argument exactly the same as Remark \ref{rmk_zero_derivative_at_ws} yields $D^-\hat{u}(w_s)=0$. It follows that $D^-\hat{v}(w_s)=\varepsilon D^-\hat{u}(w_s)\hat{v}(w_s)=0$. So $D^{\pm}\hat{v}\leq \hat{v}'(w_s)=0$. Suppose $D^{+}\hat{v}(w_0)=0$ for some $w_0\in[b,w_s)$ (same if $D^{-}\hat{v}(w_0)=0$). Then by monotonicity of $D^{+}\hat{v}$, $D^{+}\hat{v}(w)=0 \ \forall w\in[w_0, w_s)$. By convexity, 
\[0=D^+\hat{v}(w)\leq\frac{\hat{v}(w)-\hat{v}(w_s)}{w-w_s}= \frac{\hat{v}(w)-1}{w-w_s}\leq 0 \quad\forall w\in[w_0,w_s).\]
We deduce $\hat{v}\equiv 1$ on $[w_0, w_s]$, contradicting the property that $\hat{v}\geq e^{\varepsilon \psi_0}>1$ in $(b,w_s)$ (see Lemma \ref{uniqueness_existence_P2}). Therefore, we must have $D^{\pm}\hat{v}<0$ in $[b,w_s)$ which implies $\hat{v}$ is strictly decreasing.

Finally, if $\hat{v}$ is convex but not strictly convex, then it is linear in some open interval $(x,y)\subset (b,w_s)$. Since $\hat{v}$ is strictly decreasing, the line has non-zero slope, say $p$. But this cannot happen because $G(w,\hat{v}(w),p,0)$ is unbounded. 
\end{proof}

Being a convex function, $\hat{v}$ has many nice regularity properties. It is differentiable almost everywhere (a.e.), and even twice differentiable a.e. by Alexandroff's classical result \cite{Alexandroff}. To show $C^2$-regularity, we first show $C^1$-regularity using properties of viscosity solution and then upgrade to $C^2$ by analyzing a Poisson equation with the non-homogeneous term expressed in terms of $\hat{v}$ and its first derivative.

\begin{lemma}\label{fv}
$(rw-c)D^{\pm}\hat{v}-\lambda \hat{v}\ln\hat{v}$ is non-negative for all $w\in(b,w_s)$, and strictly positive if $w$ is a point of twice differentiability of $\hat{v}$. 
\end{lemma}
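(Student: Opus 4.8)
The plan is to combine the viscosity solution property of $\hat v$ for the transformed equation $G=0$ with the convexity and strict monotonicity of $\hat v$ established in Lemma~\ref{vhat_convex_monotone}. The quantity in question is exactly the denominator that appears when one rewrites $G=0$ as the Poisson-type equation $\hat v''=R(\hat v')^2/\bigl((rw-c)\hat v'-\lambda\hat v\ln\hat v\bigr)$, so the two assertions are precisely what legitimizes that rewriting. I would prove the strict inequality first on the full-measure set of twice-differentiability points, and then obtain the (non-strict) inequality everywhere by a one-sided limiting argument.

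At a point $w$ of twice differentiability of $\hat v$, the pair $(\hat v'(w),\hat v''(w))$ lies in both $J^{2,+}\hat v(w)$ and $J^{2,-}\hat v(w)$. The subsolution property gives $G(w,\hat v(w),\hat v'(w),\hat v''(w))\le 0$, in particular $G$ is finite there; since $\hat v$ is convex ($\hat v''(w)\ge 0$) and strictly decreasing ($\hat v'(w)<0$), finiteness rules out $\hat v''(w)=0$ (for which the infimum over $\pi$ of the affine map $\pi\mapsto(\mu-r)\hat v'(w)\pi$ is $-\infty$, so $G=+\infty$), hence $\hat v''(w)>0$. We are then in the region where $G$ is continuous, so the supersolution property gives $G(w,\hat v(w),\hat v'(w),\hat v''(w))\ge 0$, whence $G=0$ at $w$. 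Carrying out the explicit minimization over $\pi$ (minimizer $-(\mu-r)\hat v'(w)/(\sigma^2\hat v''(w))$, minimal value $-R(\hat v'(w))^2/\hat v''(w)$, using $R=(\mu-r)^2/(2\sigma^2)$), the equality $G=0$ rearranges to
\[(rw-c)\hat v'(w)-\lambda\hat v(w)\ln\hat v(w)=\frac{R(\hat v'(w))^2}{\hat v''(w)}>0,\]
the strict positivity being clear since $\hat v'(w)\ne 0$ and $0<\hat v''(w)<\infty$. As $D^{\pm}\hat v(w)=\hat v'(w)$ at twice-differentiability points, this is the second assertion.

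For an arbitrary $w_0\in(b,w_s)$, the set of twice-differentiability points has full Lebesgue measure by Alexandrov's theorem, so I can pick such points $w_n\downarrow w_0$ and $\tilde w_n\uparrow w_0$. Since $\hat v'=D^{\pm}\hat v$ at these points and, by convexity, $D^{+}\hat v$ is non-decreasing and right-continuous while $D^{-}\hat v$ is non-decreasing and left-continuous, we get $\hat v'(w_n)\to D^{+}\hat v(w_0)$ and $\hat v'(\tilde w_n)\to D^{-}\hat v(w_0)$; moreover $\hat v$ is continuous and bounded away from zero, so $w\mapsto\hat v(w)\ln\hat v(w)$ is continuous. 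Passing to the limit along the two sequences in the identity of the previous step (whose left-hand side is $\ge 0$) yields $(rw_0-c)D^{\pm}\hat v(w_0)-\lambda\hat v(w_0)\ln\hat v(w_0)\ge 0$, which is the first assertion.

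The main difficulty, and the reason the argument must pass through twice-differentiability points rather than apply the viscosity inequalities directly at $w_0$, is that for a merely convex function the upper second difference quotient can be $+\infty$, so $J^{2,+}\hat v(w_0)$ may be empty and the subsolution inequality carries no information at such $w_0$; the pointwise PDE identity is available only on the Alexandrov set and must be propagated by the one-sided continuity of $D^{\pm}\hat v$. A minor but essential bookkeeping point is that $G$ is $[0,+\infty]$-valued, so one must extract $\hat v''(w)>0$ from the subsolution inequality before invoking the supersolution inequality, in order to remain in the region where $G$ is finite and continuous.
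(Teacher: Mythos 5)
Your argument is correct and matches the paper's approach: derive the strict positivity at Alexandrov (twice-differentiability) points from the viscosity inequalities plus the convexity and strict monotonicity of $\hat v$ from Lemma~\ref{vhat_convex_monotone}, then extend the non-strict inequality to all of $(b,w_s)$ by passing to the limit along twice-differentiability points using properties of $D^{\pm}\hat v$. The paper obtains the Alexandrov-point inequality by citing Lemma~2 of \cite{Lion97} (which yields $G\le 0$ there, already sufficient after rearranging) and passes to the limit along a one-sided sequence using monotonicity of $D^{\pm}\hat v$ together with the sign of $rw-c$, whereas you rederive the pointwise fact directly from the semi-jet formulation, also close the loop to $G=0$ via the supersolution inequality (not needed for the lemma's conclusion), and use one-sided continuity of $D^{\pm}\hat v$ with two-sided sequences --- inessential variations of the same proof.
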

\begin{proof}
By Lemma 2 in \cite{Lion97}, $G(w,\hat{v}(w),\hat{v}'(w),\hat{v}''(w))\leq 0$ at every point $w\in(b,w_s)$ of twice differentiability. Here we note that their lemma is stated for continuous $G$, but it can be easily modify to accommodate our l.s.c. $G$. Let $w\in(b,w_s)$ be a point where $\hat{v}$ is twice differentiable. Since $\hat{v}'(w)<0$, $G(w,\hat{v}(w),\hat{v}'(w),\hat{v}''(w))\leq 0$ implies $\hat{v}''(w)>0$, and
\begin{equation*}
\lambda \hat{v}(w)\ln\hat{v}(w)+R\frac{(\hat{v}'(w))^2}{\hat{v}''(w)}-(rw-c)\hat{v}'(w)\leq 0.
\end{equation*}
We get
\[(rw-c)\hat{v}'(w)-\lambda \hat{v}(w)\ln\hat{v}(w)\geq R\frac{(\hat{v}'(w))^2}{\hat{v}''(w)}>0.\]
For arbitrary $w\in(b,w_s)$, since $\hat{v}$ is twice differentiable a.e., we can find a sequence of twice differentiability points $(w_n)\subset (b,w_s)$ which converges to $w$ from the right. Using the monotonicity of $D^{\pm}\hat{v}$, we have
\[(rw_n-c)D^{\pm}\hat{v}(w)\geq (rw_n-c)\hat{v}'(w_n)>\lambda \hat{v}(w_n)\ln\hat{v}(w_n)\]
We are done by letting $n\rightarrow\infty$ and using the continuity of $\hat{v}$.
\end{proof}

\begin{lemma}\label{vhatC1}
$\hat{v}\in C^1[b,w_s]$.
\end{lemma}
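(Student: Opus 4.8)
The plan is to show $C^1$ regularity of $\hat v$ on $[b,w_s]$ by combining the convexity established in Lemma~\ref{vhat_convex_monotone} with the viscosity supersolution property. Since $\hat v$ is convex, the one-sided derivatives $D^{\pm}\hat v$ exist everywhere, are non-decreasing, and differentiability at a point $w$ is equivalent to $D^-\hat v(w)=D^+\hat v(w)$; moreover $D^+\hat v$ is right-continuous and $D^-\hat v$ is left-continuous, and a convex function fails to be $C^1$ only on an at most countable set of kink points. So it suffices to rule out kinks in the interior and to check differentiability at the two endpoints.

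First I would handle the endpoints. At $w_s$ we already know from the argument in Lemma~\ref{vhat_convex_monotone} (mimicking Remark~\ref{rmk_zero_derivative_at_ws}) that $D^-\hat v(w_s)=0$, which is finite, so there is no issue at the right endpoint; at $b$, convexity plus boundedness ($e^\varepsilon\ge \hat v\ge e^{\varepsilon\psi_0}>1$ near $b$) forces the right derivative $D^+\hat v(b)$ to be finite (a convex function bounded below with a finite value on an interval has finite one-sided derivative at an endpoint of that interval), so $\hat v$ is ``differentiable'' at $b$ in the one-sided sense we adopted. The real content is the interior: suppose $\hat v$ has a kink at some $w_0\in(b,w_s)$, i.e. $D^-\hat v(w_0)<D^+\hat v(w_0)$. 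Then every slope $p$ in the open interval $(D^-\hat v(w_0), D^+\hat v(w_0))$ gives a $C^2$ test function (a line, or a line plus a tiny downward parabola) touching $\hat v$ from below at $w_0$; that is, $(p,A)\in J^{2,-}\hat v(w_0)$ for every such $p$ and every $A\le 0$ small. The supersolution property of $\hat v$ then forces $G(w_0,\hat v(w_0),p,A)\ge 0$ for all such $(p,A)$.

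Now I would exploit the structure of $G$. Because $\hat v$ is strictly decreasing (Lemma~\ref{vhat_convex_monotone}), all these slopes $p$ are strictly negative, so $\varepsilon$ being irrelevant here, for $A\le 0$ we have $G(w_0,\hat v(w_0),p,A)=+\infty$ when $A<0$ --- that is harmless since $+\infty\ge 0$ --- but the informative choice is $A\to 0^-$, or rather $A=0$, which is not admissible in $J^{2,-}$ from a kink. Instead I would take a genuinely useful test function: since $\hat v$ is convex and twice differentiable a.e., pick a point $w_1$ slightly to the right of $w_0$ where $\hat v$ is twice differentiable and use the subjet of the affine minorant. More cleanly, the standard argument is: at a kink, $J^{2,-}\hat v(w_0)\supset \{(p,A): p\in(D^-\hat v(w_0),D^+\hat v(w_0)),\ A\in\mathbb R\}$, and in particular one may take $A$ \emph{large and positive}; then $G(w_0,\hat v(w_0),p,A)=\lambda \hat v\ln\hat v + R(\hat v')^2/A\cdot(\ldots)$ --- wait, one must minimize the Hamiltonian, giving $\lambda \hat v\ln \hat v - \bigl(-R p^2/A + (rw_0-c)p\bigr)= \lambda\hat v\ln\hat v + Rp^2/A - (rw_0-c)p$, and letting $A\to+\infty$ yields $G\to \lambda\hat v(w_0)\ln\hat v(w_0)-(rw_0-c)p$. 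For the supersolution inequality $G\ge 0$ to hold for \emph{all} $p$ in an open interval of negative numbers is impossible unless $\lambda\hat v(w_0)\ln\hat v(w_0)-(rw_0-c)p\ge 0$ for all such $p$, and since $rw_0-c<0$ the left side is strictly increasing in $p$, so the binding constraint is at $p=D^-\hat v(w_0)$; but taking $p\downarrow D^-\hat v(w_0)$ we'd need $(rw_0-c)D^-\hat v(w_0)-\lambda\hat v(w_0)\ln\hat v(w_0)\le 0$, directly contradicting Lemma~\ref{fv}, which says this quantity is non-negative and in fact the interior of the interval of slopes is non-empty only if there is strict inequality somewhere. This contradiction rules out the kink, so $D^-\hat v=D^+\hat v$ everywhere in $(b,w_s)$, i.e. $\hat v\in C^1(b,w_s)$; continuity of the derivative up to the boundary follows from monotonicity of $D^\pm\hat v$ together with the finite boundary values just established, giving $\hat v\in C^1[b,w_s]$.

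The step I expect to be the main obstacle is getting the test-function bookkeeping at the kink exactly right: one has to be careful about which $(p,A)$ genuinely lie in $J^{2,-}\hat v(w_0)$ (a line with too-negative curvature correction is fine, an upward parabola touching from below at a kink is \emph{not} automatically admissible unless the slope is strictly interior to $[D^-\hat v(w_0),D^+\hat v(w_0)]$), and about the sign conventions in $G$ when $v''\le 0$ versus $v''>0$, since $G=+\infty$ on $\{v''\le 0\}$ must be invoked consistently. I would therefore phrase the interior argument via the cleanest available tool --- Lemma~1 and Lemma~2 of \cite{Lion97} as already used for convexity, together with Lemma~\ref{fv} --- rather than re-deriving semijet inclusions by hand, and reduce the endpoint behaviour to elementary convexity facts.
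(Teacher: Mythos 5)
Your proposal is correct in its main thrust and is essentially the paper's argument: exploit convexity so that a kink is the only obstruction, test at a kink with a quadratic whose second derivative is sent to $+\infty$, obtain $\lambda\hat v(w_0)\ln\hat v(w_0)-(rw_0-c)p\ge 0$ for every $p$ strictly between the one-sided derivatives, and contradict Lemma~\ref{fv}. Two small cleanups are worth making. First, the parenthetical general principle you invoke at $w=b$ (``a convex function bounded below with a finite value on an interval has finite one-sided derivative at an endpoint'') is false---$-\sqrt{w-b}$ is a counterexample---and what actually makes $D^+\hat v(b)>-\infty$ is that the minorant $e^{\varepsilon\psi_0}$ agrees with $\hat v$ at $b$ and has a finite one-sided derivative there, so $D^+\hat v(b)\ge \varepsilon e^{\varepsilon}D^+\psi_0(b)>-\infty$; you cite the right bound, but you should use it via this comparison rather than the false general statement. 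Second, the cleanest way to close the interior contradiction is through the $D^+$ half of Lemma~\ref{fv}: since $rw_0-c<0$ and $p<D^+\hat v(w_0)$, one has $(rw_0-c)p>(rw_0-c)D^+\hat v(w_0)\ge\lambda\hat v(w_0)\ln\hat v(w_0)$, contradicting $\lambda\hat v(w_0)\ln\hat v(w_0)-(rw_0-c)p\ge 0$ directly; routing through $D^-$ as you do only yields $(rw_0-c)D^-\hat v(w_0)-\lambda\hat v(w_0)\ln\hat v(w_0)\le 0$, which by itself combines with Lemma~\ref{fv} to give equality, not an immediate contradiction, so an extra step (strict monotonicity in $p$) is still needed.
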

\begin{proof}
We first show interior $C^1$-regularity. It suffices to show $\hat{v}$ is differentiable since a convex differentiable function is continuously differentiable. Suppose on the contrary, $D^-\hat{v}(w_0)\neq D^+\hat{v}(w_0)$ at some point $w_0\in(b,w_s)$. Let $p\in (D^-\hat{v}(w_0), D^+\hat{v}(w_0))$ and $\epsilon>0$. The function 
\[\varphi(w)=\hat{v}(w_0)+p(w-w_0)+\frac{1}{2\epsilon}(w-w_0)^2\]
satisfies $\hat{v}-\varphi$ has a local minimum at $w_0$. By supersolution property of $\hat{v}$, we get
\[G(w_0, \varphi(w_0), \varphi'(w_0), \varphi''(w_0))=\lambda \hat{v}(w_0)\ln\hat{v}(w_0)+R\epsilon p^2-(rw_0-c)p\geq 0.\]
Since $\epsilon$ is arbitrary, we get $\lambda \hat{v}(w_0)\ln\hat{v}(w_0)-(rw_0-c)p\geq 0$. In view of Lemma \ref{fv}, we must have $\lambda \hat{v}(w_0)\ln\hat{v}(w_0)-(rw_0-c)p=0$. But this cannot hold for every $p$. So the subdifferential at every point must be a singleton.

Since $\hat{v}$ is convex on $[b,w_s]$, to extend $C^1$-regularity up to the boundary, we only need to check $D^+\hat{v}(b)>-\infty$ and $D^-\hat{v}(w_s)<\infty$. We have already seen in the proof of Lemma \ref{vhat_convex_monotone} that $D^-\hat{v}(w_s)=0$. To bound $D^+\hat{v}(b)$ from below, we make use of the derivative of $\psi_{0}$. Simply observe that $D^+\hat{v}(b)=\varepsilon D^+\hat{u}(b)\hat{v}(b)$, and
\[D^+\hat{u}(b)=\lim_{w\rightarrow b+} \frac{\hat{u}(w)-1}{w-b}\geq \lim_{w\rightarrow b+}\frac{\psi_{0}(w)-1}{w-b}=D^+\psi_{0}(b)>-\infty.\]
\end{proof}

\begin{prop}\label{vhatC2}
$\hat{v}\in C^2[b,w_s)$ and satisfies $\hat{v}'<0$ and $\hat{v}''>0$ in $[b,w_s).$\footnote{The derivatives at $w=b$ is understood to be the continuous extension of interior derivatives.} In addition, $\hat{v}$ solves the second order equation
\begin{equation}\label{vODE}
\lambda v\ln v=-R\frac{(v')^2}{v''}+(rw-c)v', \quad w\in(b,w_s).
\end{equation}
\end{prop}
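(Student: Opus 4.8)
The plan is to establish $C^2$-regularity by a bootstrapping argument built on a Poisson equation, following the strategy of \cite{Janecek12} and \cite{ShreveSoner94}. Since we already know from Lemma \ref{vhatC1} that $\hat{v}\in C^1[b,w_s]$ and from Lemma \ref{vhat_convex_monotone} that $\hat{v}$ is strictly convex and strictly decreasing, the derivative $\hat{v}'$ is continuous, negative, and nondecreasing on $[b,w_s)$. The key quantity to control is the second derivative; candidate equation \eqref{vODE} suggests that wherever $\hat{v}$ is twice differentiable, we have $\hat{v}''=R(\hat{v}')^2/\big((rw-c)\hat{v}'-\lambda\hat{v}\ln\hat{v}\big)$, and by Lemma \ref{fv} the denominator is strictly positive at every point of twice differentiability. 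So the natural approach is: first show that the map $w\mapsto R(\hat{v}'(w))^2/\big((rw-c)\hat{v}'(w)-\lambda\hat{v}(w)\ln\hat{v}(w)\big)$, call it $g(w)$, is well-defined and continuous on $(b,w_s)$ (using continuity of $\hat{v},\hat{v}'$ plus Lemma \ref{fv} to rule out a vanishing denominator — here one needs the non-strict positivity from Lemma \ref{fv} upgraded to strict positivity at \emph{every} interior point, which follows once we know $\hat{v}$ is twice differentiable on a dense set and the denominator is monotone-controlled), then show $\hat{v}$ actually solves the linear-looking Poisson problem $\hat{v}''=g$ in the viscosity/distributional sense, and finally invoke that a convex function whose a.e. second derivative coincides with a continuous function $g$ must be in $C^2$ with $\hat{v}''=g$.

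The concrete steps I would carry out, in order: (1) Define $g$ as above on $(b,w_s)$ and verify $g$ is continuous and strictly positive there; the denominator's strict positivity at a general point $w$ follows by the argument in Lemma \ref{fv} (approximate $w$ by twice-differentiability points from the right, use monotonicity of $\hat{v}'$), and continuity is immediate from that of $\hat{v}$ and $\hat{v}'$. (2) Show that $\hat{v}$ is a viscosity solution of the \emph{linear} second-order equation $v''=g(w)$: for the subsolution side, at an interior point $w_0$ with a $C^2$ test function $\varphi\geq\hat{v}$ touching at $w_0$, we have $\varphi'(w_0)=\hat{v}'(w_0)<0$ and $\varphi''(w_0)\geq$ the relevant curvature, and the viscosity subsolution property of $\hat{v}$ for $G=0$ (which forces $\varepsilon(\varphi')^2+\varphi''>0$, i.e. $\varphi''>$ something, giving $\varphi''(w_0)\geq g(w_0)$ after rearranging $G(w_0,\varphi(w_0),\varphi'(w_0),\varphi''(w_0))\leq 0$); symmetrically for the supersolution side using Lemma \ref{fv} to handle the case $\varphi''(w_0)\le 0$, which is excluded by convexity anyway. (3) Since $v\mapsto v''-g$ with continuous $g$ satisfies comparison and $\hat{v}$ is a continuous viscosity solution with the given boundary data, and since the explicit solution of $v''=g$ with those boundary values is $C^2$, uniqueness forces $\hat{v}$ to equal that $C^2$ function; alternatively, integrate: since $\hat{v}'$ is monotone hence BV, and its distributional derivative agrees a.e. with the continuous function $g$ (as $\hat{v}''=g$ a.e. by \eqref{vODE} at twice-differentiability points), the measure $d(\hat{v}') = g\,dw$ has no singular part, so $\hat{v}'(w)=\hat{v}'(b^+)+\int_b^w g(s)\,ds$ is $C^1$, whence $\hat{v}\in C^2[b,w_s)$. (4) Conclude $\hat{v}''=g>0$ on $[b,w_s)$ and $\hat{v}'<0$ (already known), and that \eqref{vODE} holds classically throughout $(b,w_s)$; the value at $w=b$ is the continuous extension.

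The main obstacle I anticipate is step (3) — ruling out a singular (jump or Cantor) part in the measure $d\hat{v}'$, equivalently promoting "twice differentiable a.e. with $\hat{v}''$ matching a continuous function" to genuine $C^2$. The clean way is to argue that $\hat{v}'$, being convex-function-derived and hence monotone, is determined by its absolutely continuous part plus a nonnegative singular measure $\nu$; then a test-function argument using the viscosity \emph{supersolution} property of $\hat{v}$ at a point in the support of $\nu$ would produce a contradiction (a supporting parabola from below with arbitrarily large curvature would, via $G\ge 0$ and the sign of $\hat{v}'$, force the denominator $\le 0$, contradicting Lemma \ref{fv}); this is exactly the kind of local blow-up argument used in \cite{Janecek12} and \cite{ShreveSoner94}, and carrying it out carefully with our $l.s.c.$ Hamiltonian $G$ is the delicate part. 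A secondary technical point is handling the left endpoint $w=b$: one must check that $g$ extends continuously to $b$ (it does, since $\hat{v}(b)=e^\varepsilon$, $\hat{v}'(b^+)>-\infty$ by Lemma \ref{vhatC1}, and the denominator at $b$ equals $(rb-c)\hat{v}'(b^+)>0$), so that the integral representation of $\hat{v}'$ is valid up to $b$ and yields the one-sided $C^2$ extension claimed in the footnote.
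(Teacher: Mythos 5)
Your overall strategy — reduce to a Poisson equation $\hat{v}''=g$, show $\hat{v}$ solves it in viscosity sense, and invoke comparison (or a BV argument) — matches the paper's. But there is a genuine gap at the very start, in your step (1), and it infects the whole plan.

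You claim that $g$ is continuous and strictly positive on $(b,w_s)$, i.e. that $f(w):=(rw-c)\hat{v}'(w)-\lambda\hat{v}(w)\ln\hat{v}(w)$ is strictly positive at \emph{every} interior point, and you say this ``follows by the argument in Lemma~\ref{fv}... once we know $\hat{v}$ is twice differentiable on a dense set and the denominator is monotone-controlled.'' This does not follow. Lemma~\ref{fv} yields strict positivity only at points of twice differentiability; the approximation argument (taking $w_n\downarrow w$, $w_n$ points of twice differentiability, and using monotonicity of $D^{\pm}\hat{v}$) passes to a \emph{non-strict} inequality $f(w)\geq 0$ in the limit. Continuity of $f$ together with $f>0$ a.e.\ does \emph{not} give $f>0$ everywhere — a nonnegative continuous function can vanish on a set of measure zero. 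Nothing in the hypotheses you have available at that stage rules this out. And if $f$ could vanish at an interior point, $g$ would blow up there and the Poisson-equation framework of your steps (2)--(3) would collapse.

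The paper resolves exactly this difficulty with a two-step bootstrap whose order is the reverse of yours. In Step~1 it proves $C^2$-regularity and $\hat{v}''=g$ on any closed subinterval $[w_1,w_2]$ on which $f>0$ (via the viscosity subsolution/supersolution characterization of $-v''+g=0$ and comparison with explicit $C^2$ solutions of $-v''+g=\epsilon$ — the same comparison trick you sketch, borrowed from ShreveSoner). Only then, in Step~2, does it prove $f>0$ on all of $[b,w_s)$: picking $w_1$ with $f(w_1)>0$, supposing $w_0:=\sup\{w<w_1:f(w)=0\}$ exists, applying Step~1 on $(w_0,w_1)$ to get classical $\hat{v}''=g$ there, and deriving a contradiction via the mean value theorem because $(rz-c)\hat{v}''(z)=(rz-c)g(z)\to-\infty$ as $z\downarrow w_0$, while $f'$ must have a finite nonnegative limit on average. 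That argument crucially \emph{uses} the local $C^2$ regularity from Step~1, so it cannot be moved before it. Your plan puts the global positivity of $f$ first, with no valid argument for it; the rest of your outline (steps (2) and (3), including your honest worry about singular parts, which the comparison-with-$v_\epsilon$ argument cleanly sidesteps) is sound, but it needs to be run locally first, on intervals where $f>0$, and then fed into the mean-value-theorem contradiction to kill potential zeros of $f$.

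A minor further slip: in your ``secondary technical point'' you write that the denominator at $w=b$ equals $(rb-c)\hat{v}'(b^+)$, dropping the term $-\lambda\hat{v}(b)\ln\hat{v}(b)=-\lambda\varepsilon e^{\varepsilon}<0$. Strict positivity of $f(b)$ is therefore not immediate from $\hat{v}'(b^+)>-\infty$ alone; in the paper it is a consequence of the Step~2 argument together with $f>0$ a.e.
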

\begin{proof}
$\hat{v}'<0$ is due to Lemma \ref{vhat_convex_monotone}. Let $f(w):=(rw-c)\hat{v}'(w)-\lambda \hat{v}(w)\ln\hat{v}(w)$. By Lemmas \ref{fv} and \ref{vhatC1}, $f$ is continuous, non-negative and a.e. strictly positive in $(b,w_s)$. Let $g(w):=R(\hat{v}'(w))^2/f(w)$. The proof of $C^2$-regularity consists of two steps.

Step 1. Show that for any interval $[w_1, w_2]\subset [b,w_s]$ such that $f>0$ on $[w_1, w_2]$, $\hat{v}\in C^2[w_1,w_2]$. Notice that $g$ is continuous on $[w_1, w_2]$.

First of all, we show $\hat{v}$ is a viscosity solution of 
\begin{equation}\label{vhatC2_eq1}
-v''(w)+g(w)=0, \quad w\in(w_1, w_2).
\end{equation}
Let $w_0\in(w_1,w_2)$ and $\varphi\in C^2(w_1,w_2)$ be any test functions such that $\hat{v}-\varphi$ has a local maximum at $w_0$. Since $\hat{v}$ is a $C^1$ subsolution of $G=0$, we have $\varphi'(w_0)=\hat{v}'(w_0)$ and
\[G(w_0, \hat{v}(w_0), \hat{v}'(w_0), \varphi''(w_0))\leq 0.\]
Since $\hat{v}'(w_0)<0$, we must have $\varphi''(w_0)>0$ for the above $G$ to be finite. Writing out the expression for $G$ and optimizing over $\pi$, we get
\[-f(w_0)+R\frac{(\hat{v}'(w_0))^2}{\varphi''(w_0)}=\lambda \hat{v}(w_0)\ln \hat{v}(w_0)+R\frac{(\hat{v}'(w_0))^2}{\varphi''(w_0)}-(rw_0-c)\hat{v}'(w_0)\leq 0,\]
which, after multiplying by the positive quantity $\frac{\varphi''(w_0)}{f(w_0)}$, is precisely
\[-\varphi''(w_0)+g(w_0)\leq 0.\]
This shows $\hat{v}$ is a subsolution of \eqref{vhatC2_eq1}. Let $w_0\in(w_1,w_2)$ and $\varphi\in C^2(w_1,w_2)$ be any test function such that $\hat{v}-\varphi$ has a local minimum at $w_0$. If $\varphi''(w_0)\leq 0$, then we immediately have $-\varphi''(w_0)+g(w_0)\geq 0$ since $g$ is nonnegative. If $\varphi''(w_0)> 0$, then we use $\hat{v}$ is a $C^1$ supersolution of $G=0$ to obtain $\varphi'(w_0)=\hat{v}'(w_0)$ and
\[G(w_0, \hat{v}(w_0), \hat{v}'(w_0), \varphi''(w_0))\geq 0.\]
Optimizing over $\pi$ in the expression for $G$, we also get $-\varphi''(w_0)+g(w_0)\geq 0$. This shows $\hat{v}$ is a supersolution of \eqref{vhatC2_eq1}.

Next, we follow the argument on page 652 of \cite{ShreveSoner94} and consider the Poisson equation
\begin{equation}\label{vhatC2_eq2}
-v''+g=\epsilon
\end{equation}
with Dirichlet boundary conditions $v(w_1)=\hat{v}(w_1)$, $v(w_2)=\hat{v}(w_2)$. Here $\epsilon$ is a real number of our choice. We can integrate $g-\epsilon$ twice to get a $C^2[w_1, w_2]$ solution, denoted by $v_\epsilon$.
To compare $\hat{v}$ with $v_\epsilon$, first take $\epsilon>0$ and suppose $\hat{v}-v_\epsilon$ has a local maximum at some point $w_0\in(w_1, w_2)$. Since $\hat{v}$ is a viscosity subsolution of \eqref{vhatC2_eq1}, we have
\[-v_\epsilon''(w_0)+g(w_0)\leq 0,\]
which contradicts \eqref{vhatC2_eq2}. So the maximum must be attained on the boundary where it is zero. This means $\hat{v}\leq v_\epsilon$. Letting $\epsilon\rightarrow 0$ yields $\hat{v}\leq v_0$. The reverse inequality is obtained by taking $\epsilon<0$ and using $\hat{v}$ is a viscosity supersolution of \eqref{vhatC2_eq1}. This finishes the proof that $\hat{v}=v_0\in C^2[w_1,w_2]$.

Step 2. Show $f(w)>0$ for any $w\in[b,w_s)$. 

We use an argument similar to that on page 811-812 of \cite{Janecek12}. Pick any point $w_1\in (b,w_s)$ where $f(w_1)>0$. Since $f$ is continuous, $f>0$ in a neighborhood of $w_1$. Suppose $f$ vanishes at some point to the left of $w_1$. Let $w_0:=\sup\{w\in[b,w_1): f(w_0)=0\}$. By step 1, $\hat{v}$ satisfies equation \eqref{vhatC2_eq1} in the classical sense in $(w_0, w_1)$. Let $w\in(w_0,w_1)$. By mean value theorem,
\begin{equation}\label{vhatC2_eq3}
\frac{f(w)-f(w_0)}{w-w_0}=f'(z)=(r-\lambda)\hat{v}'(z)+(rz-c)\hat{v}''(z)-\lambda\hat{v}'(z)\ln\hat{v}(z)
\end{equation}
for some $z\in(w_0,w)$. Let $w\rightarrow w_0+$. Notice that $\hat{v}''(z)\rightarrow \infty$ because $\hat{v}''(z)=g(z)$ from equation \eqref{vhatC2_eq1}, and $g(z)$ has a strictly positive numerator and a denominator that is going to zero from the positive side. So the middle term on the right hand side of \eqref{vhatC2_eq3} is exploding to $-\infty$ while the other two terms converge to finite numbers. This contradicts the non-negativity of the left hand side. So $f(w_1)>0$ necessarily implies $f(w)>0$ for all $w\in[b,w_1)$. Since $f>0$ a.e., we conclude that $f>0$ in $[b,w_s)$. Combining step 1 and 2, we have $\hat{v}\in C^2[b,w_s)$.

From the proof of Lemma \ref{fv}, we know $\hat{v}''>0$ in $(b,w_s)$. Optimizing over $\pi$ in \eqref{P2:a} leads to \eqref{vODE}. Since $\hat{v}'(b)<0$, \eqref{vODE} implies $\hat{v}''(b)>0$. 
\end{proof}

Once we have $C^2$-regularity, we can further upgrade to infinite differentiability with little effort.
\begin{cor}\label{vhatCinfty}
$\hat{v}\in C^{\infty}[b,w_s)$.
\end{cor}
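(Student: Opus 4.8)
The plan is to bootstrap from the $C^2$-regularity just obtained, using the second-order equation \eqref{vODE} rewritten in a form that isolates $\hat{v}''$. Since Proposition \ref{vhatC2} gives $\hat{v}\in C^2[b,w_s)$ with $\hat{v}'<0$ and $\hat{v}''>0$ throughout $[b,w_s)$, we may solve \eqref{vODE} algebraically for $\hat{v}''$: from $\lambda\hat{v}\ln\hat{v}=-R(\hat{v}')^2/\hat{v}''+(rw-c)\hat{v}'$ we get
\begin{equation*}
\hat{v}''=\frac{R(\hat{v}')^2}{(rw-c)\hat{v}'-\lambda\hat{v}\ln\hat{v}}=\frac{R(\hat{v}')^2}{f(w)},
\end{equation*}
where $f(w)=(rw-c)\hat{v}'(w)-\lambda\hat{v}(w)\ln\hat{v}(w)>0$ on $[b,w_s)$ by Step 2 of the proof of Proposition \ref{vhatC2}. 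The key observation is that the right-hand side is a smooth (indeed real-analytic away from the zeros of its denominator) function of the variables $w$, $\hat{v}$, and $\hat{v}'$: the map $(w,v,p)\mapsto Rp^2/\big((rw-c)p-\lambda v\ln v\big)$ is $C^\infty$ on the open region where the denominator is nonzero and $v>0$, and this region contains the graph $\{(w,\hat{v}(w),\hat{v}'(w)):w\in[b,w_s)\}$ because $f>0$ there and $\hat{v}\geq e^{\varepsilon\psi_0}>1>0$.

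The argument is then a standard elliptic (here ODE) bootstrap. First I would note that since $\hat{v}\in C^2$, the right-hand side $R(\hat{v}')^2/f$ is $C^1$ in $w$ (it is a composition of a $C^\infty$ function with the $C^2$ map $w\mapsto(w,\hat{v}(w),\hat{v}'(w))$, whose components are $C^\infty$, $C^2$, and $C^1$ respectively, so the composition is at least $C^1$). Hence $\hat{v}''\in C^1[b,w_s)$, i.e. $\hat{v}\in C^3[b,w_s)$. Now the map $w\mapsto(w,\hat{v}(w),\hat{v}'(w))$ is $C^\infty$, $C^3$, $C^2$ in its components, so the right-hand side is $C^2$, giving $\hat{v}\in C^4$. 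Iterating, if $\hat{v}\in C^k[b,w_s)$ for some $k\geq 2$ then $\hat{v}'\in C^{k-1}$, the composition $w\mapsto R(\hat{v}')^2/f$ is $C^{k-1}$, so $\hat{v}''\in C^{k-1}$ and therefore $\hat{v}\in C^{k+1}[b,w_s)$. By induction $\hat{v}\in C^k[b,w_s)$ for every $k$, i.e. $\hat{v}\in C^\infty[b,w_s)$.

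I do not expect any serious obstacle here; the only points requiring a line of care are (i) confirming that the denominator $f$ stays strictly positive on all of $[b,w_s)$ so that division is legitimate, which is exactly the content of Step 2 of the proof of Proposition \ref{vhatC2} and of Lemma \ref{fv}; and (ii) checking that $\hat{v}>1$ (hence $\hat{v}\ln\hat{v}$ is smooth in $\hat{v}$, with no issue at $v=0$ where $v\ln v$ is merely continuous), which follows from $\hat{v}\geq e^{\varepsilon\psi_0}$ and $\psi_0>0$ in $(b,w_s)$ together with the continuous extension to $w=b$. With these in hand the induction is immediate. One could alternatively phrase the same argument via the regularity of solutions to the first-order ODE $\hat{v}''=\Phi(w,\hat{v},\hat{v}')$ with smooth $\Phi$, but the direct bootstrap above is the cleanest.
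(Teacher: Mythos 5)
Your proposal is correct and follows essentially the same bootstrap as the paper: both solve \eqref{vODE} for $\hat{v}''=R(\hat{v}')^2/f(w)$ (the paper's function $g$) and iterate to gain one derivative at a time. You are a bit more explicit about why the right-hand side is smooth as a function of $(w,\hat{v},\hat{v}')$ — namely $f>0$ on $[b,w_s)$ and $\hat{v}>1$ so $v\ln v$ is smooth — but these are exactly the facts the paper relies on from Proposition \ref{vhatC2} and Lemma \ref{fv}.
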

\begin{proof}
Let $g$ be defined as before. With $\hat{v}\in C^2[b,w_s)$, we now have $g\in C^1[b,w_s)$. It then follows from $\hat{v}''=g$ that $\hat{v}\in C^3[b,w_s)$. This in turn implies $g\in C^2[b,w_s)$ and so on. Inductively, we will get $\hat{v}\in C^\infty[b,w_s)$.
\end{proof}
\begin{remark}
Since $f(w_s)=0$, only $C^1$-regularity is guaranteed at the right boundary. Even in the non-robust case, it is possible to have an unbounded second derivative at the safe level.
\end{remark}

Going back to the original problem through $\hat{u}=\frac{1}{\varepsilon}\ln \hat{v}$, we have the following proposition for $\hat{u}$.
\begin{prop}\label{uhat_reg}
$\hat{u}\in C^1[b,w_s]\cap C^{2}[b,w_s)$, and satisfies $\hat{u}'<0$ and $\varepsilon(\hat{u}')^2+\hat{u}''>0$ in $[b,w_s)$. In addition, $\hat{u}$ solves the second order equation
\begin{equation}\label{uODE}
\lambda u=-R\frac{(u')^2}{\varepsilon (u')^2+u''}+(rw-c)u', \quad w\in(b,w_s).
\end{equation}
\end{prop}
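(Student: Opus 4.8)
The plan is to transfer all the regularity and structural properties already established for $\hat{v}$ in Proposition~\ref{vhatC2} and Corollary~\ref{vhatCinfty} to $\hat{u}$ via the inverse Cole--Hopf transformation $\hat{u} = \frac{1}{\varepsilon}\ln\hat{v}$. Since $\hat{v}$ is strictly positive (indeed $\hat{v}\geq 1$) and bounded away from zero, the map $v\mapsto \frac{1}{\varepsilon}\ln v$ is a smooth diffeomorphism on the range of $\hat{v}$, so composition preserves $C^k$-regularity in both directions. Concretely, $\hat{v}\in C^1[b,w_s]$ gives $\hat{u}\in C^1[b,w_s]$, and $\hat{v}\in C^2[b,w_s)$ gives $\hat{u}\in C^2[b,w_s)$; at $w=b$ the one-sided derivatives of $\hat{u}$ are obtained as continuous extensions of the interior ones, matching the convention used for $\hat{v}$.

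Next I would compute the derivatives explicitly. From $\hat{v} = e^{\varepsilon\hat{u}}$ we get $\hat{v}' = \varepsilon\hat{v}\,\hat{u}'$ and $\hat{v}'' = \varepsilon\hat{v}\big(\varepsilon(\hat{u}')^2 + \hat{u}''\big)$. The sign statements then follow immediately from Proposition~\ref{vhatC2}: since $\hat{v}>0$ and $\hat{v}'<0$ in $[b,w_s)$, we obtain $\hat{u}' = \hat{v}'/(\varepsilon\hat{v})<0$; and since $\hat{v}''>0$ in $[b,w_s)$, we obtain $\varepsilon(\hat{u}')^2 + \hat{u}'' = \hat{v}''/(\varepsilon\hat{v})>0$. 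These two relations are exactly the conditions under which the first-order condition leading to \eqref{pistar} is legitimate, so they also confirm a posteriori that the formal derivation of the HJB equation was valid along $\hat{u}$.

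Finally, to obtain the ODE \eqref{uODE}, I would substitute the expressions for $\hat{v}'$ and $\hat{v}''$ into equation \eqref{vODE} for $\hat{v}$, namely $\lambda\hat{v}\ln\hat{v} = -R(\hat{v}')^2/\hat{v}'' + (rw-c)\hat{v}'$. Using $\hat{v}\ln\hat{v} = \hat{v}\cdot\varepsilon\hat{u}$, the left side becomes $\lambda\varepsilon\hat{v}\hat{u}$. On the right side, $(\hat{v}')^2/\hat{v}'' = \varepsilon^2\hat{v}^2(\hat{u}')^2 / \big(\varepsilon\hat{v}(\varepsilon(\hat{u}')^2+\hat{u}'')\big) = \varepsilon\hat{v}\,(\hat{u}')^2/(\varepsilon(\hat{u}')^2+\hat{u}'')$, and $(rw-c)\hat{v}' = (rw-c)\varepsilon\hat{v}\hat{u}'$. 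Dividing through by the common nonzero factor $\varepsilon\hat{v}$ yields \eqref{uODE} exactly. Alternatively, one can argue directly that $\hat{u}$, being $C^2$, is a classical solution of \eqref{P1:a} by the same semijet argument used for $\hat{v}$, and then optimize over $\pi$; but the substitution route is cleaner. There is essentially no obstacle here — this proposition is a bookkeeping corollary of the substantive work in Section~\ref{sec:regularity}; the only minor point requiring care is the boundary behavior at $w=b$, which is handled by the continuous-extension convention already in force for $\hat{v}$, and the fact that $C^1$ up to $w=b$ for $\hat{v}$ transfers to $\hat{u}$ because $\hat{v}$ stays bounded away from its logarithmic singularity.
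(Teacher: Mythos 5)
Your proof is correct and takes the same approach as the paper, which in fact omits any explicit argument and simply states the proposition as an immediate consequence of Proposition~\ref{vhatC2} via $\hat{u}=\frac{1}{\varepsilon}\ln\hat{v}$. You have merely filled in the bookkeeping the paper leaves implicit: the derivative identities $\hat{v}'=\varepsilon\hat{v}\hat{u}'$, $\hat{v}''=\varepsilon\hat{v}(\varepsilon(\hat{u}')^2+\hat{u}'')$, the resulting sign transfer, and the substitution into \eqref{vODE} to obtain \eqref{uODE}.
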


\section{Verification}\label{sec:verification}

In order to relate $\hat{u}$ to the value function through verification, we first need to show the feedback forms lead to a pair of admissible controls under which the SDE for the controlled wealth process has a unique strong solution. The $\pi$ attaining the infimum in $F(w,\hat{u}, \hat{u}', \hat{u}'')$ is given by
\[\pi^\ast=-\frac{\mu-r}{\sigma^2}\frac{\hat{u}'}{\varepsilon(\hat{u}')^2+\hat{u}''}=-\frac{\mu-r}{\sigma^2}\frac{\hat{v}'}{\hat{v}''},\]
which is the same as the $\pi$ attaining the infimum in $G(w,\hat{v}, \hat{v}', \hat{v}'')$. We already know from the previous section that $\pi^\ast$ is smooth in $(b,w_s)$, thus locally Lipschitz. We will show $\pi^\ast$ is also well-behaved near the boundary.

\begin{lemma}\label{pi_bdd}
\begin{equation*}
0<\pi^\ast(w)< \frac{2(c-rw)}{\mu-r}, \quad w\in[b,w_s).
\end{equation*}
\end{lemma}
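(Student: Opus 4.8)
The plan is to establish the two inequalities $\pi^\ast > 0$ and $\pi^\ast < \frac{2(c-rw)}{\mu-r}$ on the \emph{open} interval $(b,w_s)$ first, using the monotonicity and convexity of $\hat v$ already proved, and then pass to the closed-left endpoint $w=b$ by continuity of $\pi^\ast = -\frac{\mu-r}{\sigma^2}\frac{\hat v'}{\hat v''}$ (which is continuous on $[b,w_s)$ by Proposition~\ref{vhatC2}). The lower bound $\pi^\ast>0$ is immediate: $\mu-r>0$, $\hat v'<0$ and $\hat v''>0$ on $[b,w_s)$ by Lemma~\ref{vhat_convex_monotone} and Proposition~\ref{vhatC2}, so the quotient is strictly positive.

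For the upper bound, the natural route is to rewrite the desired inequality $\pi^\ast(w) < \frac{2(c-rw)}{\mu-r}$ using the feedback form: it is equivalent to $-\frac{\mu-r}{\sigma^2}\frac{\hat v'}{\hat v''} < \frac{2(c-rw)}{\mu-r}$, i.e. (since $\hat v''>0$ and $c-rw>0$) to
\[
-\frac{(\mu-r)^2}{2\sigma^2}\,\hat v' < (c-rw)\,\hat v'',
\]
that is, $2R(-\hat v') < (c-rw)\hat v''$ with $R=\frac12\big(\frac{\mu-r}{\sigma}\big)^2$. Now I would invoke the ODE \eqref{vODE} that $\hat v$ satisfies: $\lambda \hat v\ln\hat v = -R\frac{(\hat v')^2}{\hat v''} + (rw-c)\hat v'$, which rearranges to $(c-rw)(-\hat v') = R\frac{(\hat v')^2}{\hat v''} + \lambda \hat v\ln\hat v \geq R\frac{(\hat v')^2}{\hat v''}$, using $\hat v\geq 1$ so $\hat v\ln\hat v\geq 0$ (in fact $\hat v > 1$ in the interior by Lemma~\ref{uniqueness_existence_P2}). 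Dividing by $-\hat v' > 0$ gives $c-rw \geq R\frac{-\hat v'}{\hat v''}$, i.e. $R\frac{-\hat v'}{\hat v''}\leq c-rw$, which is \emph{exactly} half of what we want — it yields $\pi^\ast \leq \frac{c-rw}{\mu-r}\cdot\frac{2R\sigma^2}{(\mu-r)^2}$... let me recompute: $\pi^\ast = \frac{\mu-r}{\sigma^2}\cdot\frac{-\hat v'}{\hat v''} \leq \frac{\mu-r}{\sigma^2}\cdot\frac{c-rw}{R} = \frac{\mu-r}{\sigma^2}\cdot\frac{2\sigma^2(c-rw)}{(\mu-r)^2} = \frac{2(c-rw)}{\mu-r}$. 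So in fact the bound drops out directly from \eqref{vODE} together with $\hat v\ln\hat v\geq 0$, and the inequality is \emph{strict} in the interior precisely because $\hat v\ln\hat v > 0$ there (as $\hat v>1$). This handles $(b,w_s)$.

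It remains to get strictness at $w=b$. Here $\hat v(b)=e^\varepsilon>1$ so $\hat v(b)\ln\hat v(b) = \varepsilon e^\varepsilon > 0$, and the same computation with the continuous extension of $\hat v'$, $\hat v''$ to $w=b$ (which exist and satisfy $\hat v'(b)<0$, $\hat v''(b)>0$ by Proposition~\ref{vhatC2}) gives $(c-rb)(-\hat v'(b)) = R\frac{(\hat v'(b))^2}{\hat v''(b)} + \lambda\varepsilon e^\varepsilon > R\frac{(\hat v'(b))^2}{\hat v''(b)}$, so the strict inequality $\pi^\ast(b) < \frac{2(c-rb)}{\mu-r}$ persists; and $\pi^\ast(b)>0$ as before. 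I expect no real obstacle here — the main subtlety is just bookkeeping: making sure \eqref{vODE} is used in the form valid on the \emph{open} interval and then extended to $b$ by continuity, and observing that the sign of $\lambda\hat v\ln\hat v$ (nonnegative, strictly positive when $\hat v>1$) is what converts the "$\leq$" into "$<$". One should also note the degenerate case $\lambda=0$ is already covered by Section~\ref{sec:psi0} where $\varpi = \frac{2(c-rw)}{\mu-r}$ exactly, consistent with the bound becoming an equality when the $\lambda\hat v\ln\hat v$ term vanishes — so implicitly we are using $\lambda>0$ for strictness, or rather strictness comes from $\hat v>1$ combined with $\lambda>0$; if $\lambda=0$ the statement should be read with "$\leq$" at interior points too, matching $\mathfrak p$.
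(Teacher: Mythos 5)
Your proof is correct and takes essentially the same approach as the paper: the paper rewrites the ODE \eqref{uODE} for $\hat u$ as $\lambda\hat u=\bigl(\tfrac{\mu-r}{2}\pi^\ast+rw-c\bigr)\hat u'$ and reads off the sign of the bracket from $\hat u>0$, $\hat u'<0$, while you run the equivalent computation through the Cole-Hopf variable $\hat v$ via \eqref{vODE}, using $\hat v>1$ (equivalently $\hat u>0$) for the same sign conclusion. One cosmetic slip: the displayed inequality you aim for should read $R(-\hat v')<(c-rw)\hat v''$, not $2R(-\hat v')$, but the subsequent arithmetic is already consistent with the corrected version.
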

\begin{proof}
The lower bound is trivial. For the upper bound, rewrite equation \eqref{uODE} as
\begin{equation}\label{pi_eq_u}
\lambda \hat{u}=\left(\frac{\mu-r}{2}\pi^\ast+rw-c\right)\hat{u}'.
\end{equation}
For $w\in[b,w_s)$, since $\hat{u}(w)>0$ and $\hat{u}'(w)<0$, we must have $\frac{\mu-r}{2}\pi^\ast(w)+rw-c<0$.
\end{proof}

\begin{cor}
$\theta^\ast:=\sigma\varepsilon \pi^\ast\hat{u}'$ is bounded and satisfies $\lim_{w\rightarrow w_s-}\theta^\ast(w)=0$. More precisely,
\[\frac{2\sigma\varepsilon}{\mu-r}(c-rb)\hat{u}'(b)\leq \frac{2\sigma\varepsilon}{\mu-r}(c-rw)\hat{u}'(w)<\theta^\ast(w)< 0, \quad w\in[b,w_s).\]
\end{cor}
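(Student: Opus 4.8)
The plan is to reduce the whole statement to the algebraic identity \eqref{pi_eq_u} together with the bounds on $\pi^\ast$ in Lemma \ref{pi_bdd}; throughout we are in the genuinely robust regime $\lambda>0$, the case $\lambda=0$ being covered by $\mathfrak{p}$ (there $\pi^\ast$ reduces to $\tfrac{2(c-rw)}{\mu-r}$ and the inequalities below degenerate to equalities). Clearing \eqref{pi_eq_u} and substituting into $\theta^\ast=\sigma\varepsilon\pi^\ast\hat u'$, I would first record the identity
\[
\theta^\ast(w)=\sigma\varepsilon\,\pi^\ast(w)\,\hat u'(w)=\frac{2\sigma\varepsilon}{\mu-r}\bigl(\lambda\hat u(w)+(c-rw)\hat u'(w)\bigr),\qquad w\in[b,w_s).
\]
Everything then follows from this identity and the facts already in hand: $\hat u\in C^1[b,w_s]\cap C^2[b,w_s)$ with $\hat u'<0$ and $\varepsilon(\hat u')^2+\hat u''>0$ on $[b,w_s)$ (Proposition \ref{uhat_reg}), and $0<\pi^\ast(w)<\tfrac{2(c-rw)}{\mu-r}$ on $[b,w_s)$ (Lemma \ref{pi_bdd}).

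For boundedness and the behaviour at the safe level: $\hat u$ and $\hat u'$ are continuous on the compact interval $[b,w_s]$, hence so is $w\mapsto(c-rw)\hat u'(w)$, and the displayed identity shows $\theta^\ast$ is bounded on $[b,w_s)$; since $c-rw_s=0$ and $\hat u(w_s)=0$, the right-hand side tends to $0$ as $w\to w_s-$, giving $\lim_{w\to w_s-}\theta^\ast(w)=0$. For the sign, $\pi^\ast>0$ and $\hat u'<0$ on $[b,w_s)$ give $\theta^\ast=\sigma\varepsilon\pi^\ast\hat u'<0$. For the inner (upper) bound $\tfrac{2\sigma\varepsilon}{\mu-r}(c-rw)\hat u'(w)<\theta^\ast(w)$, plug $\lambda\hat u(w)>0$ for $w\in[b,w_s)$ (with $\hat u(b)=1$ and $\hat u\ge\psi_{0}>0$ on $(b,w_s)$) into the displayed identity; equivalently, multiply the strict inequality $\pi^\ast(w)<\tfrac{2(c-rw)}{\mu-r}$ of Lemma \ref{pi_bdd} by the negative number $\sigma\varepsilon\hat u'(w)$, which reverses it.

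The last remaining claim, $\tfrac{2\sigma\varepsilon}{\mu-r}(c-rb)\hat u'(b)\le\tfrac{2\sigma\varepsilon}{\mu-r}(c-rw)\hat u'(w)$, is equivalent to the assertion that
\[
\eta(w):=(c-rw)\hat u'(w)
\]
attains its minimum over $[b,w_s]$ at the left endpoint $w=b$. Since $\eta<0$ on $[b,w_s)$, $\eta(w_s)=0$, and $\eta$ is continuous, it suffices to show $\eta$ is non-decreasing. The plan is to differentiate, write $\eta'(w)=-r\hat u'(w)+(c-rw)\hat u''(w)$, and eliminate $\hat u''$ using \eqref{uODE}: solving for the second derivative gives $\varepsilon(\hat u')^2+\hat u''=R(\hat u')^2/\bigl((rw-c)\hat u'-\lambda\hat u\bigr)$, whose denominator equals $-\tfrac{\mu-r}{2}\pi^\ast\hat u'>0$ by \eqref{pi_eq_u}; hence $\hat u''=(\hat u')^2\bigl(R/((rw-c)\hat u'-\lambda\hat u)-\varepsilon\bigr)$, and $\eta'$ becomes the sum of two positive terms, $-r\hat u'$ and $(c-rw)(\hat u')^2R/((rw-c)\hat u'-\lambda\hat u)$, minus the term $\varepsilon(c-rw)(\hat u')^2$. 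One then has to show the first two dominate the third; I expect the bound $\tfrac{\mu-r}{2}\pi^\ast<c-rw$ of Lemma \ref{pi_bdd} and the convexity of $\hat v=e^{\varepsilon\hat u}$ to be the inputs. An alternative is to argue on the convex side: from \eqref{vODE} one gets $\varepsilon\eta=-\lambda\ln\hat v-R(\hat v')^2/(\hat v\,\hat v'')$, where $-\lambda\ln\hat v$ is already increasing (as $\hat v$ is strictly decreasing), which reduces the problem to a monotonicity property of $(\hat v')^2/(\hat v\,\hat v'')$.

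The main obstacle is exactly this monotonicity of $\eta(w)=(c-rw)\hat u'(w)$: the term $-\varepsilon(c-rw)(\hat u')^2$ in $\eta'$ carries the ``wrong'' sign, so non-negativity of $\eta'$ is not formal and must be extracted from the ODE, the convexity of $\hat v$, and Lemma \ref{pi_bdd}; all the other assertions are immediate from the identity for $\theta^\ast$ and the regularity and sign information already established for $\hat u$ and $\pi^\ast$. The corollary's closing two-sided estimate then simply collects these bounds, with the derivatives at $w=b$ understood as continuous extensions of the interior ones.
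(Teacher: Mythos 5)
The reduction to the identity
\[
\theta^\ast(w)=\sigma\varepsilon\pi^\ast(w)\hat u'(w)=\frac{2\sigma\varepsilon}{\mu-r}\bigl(\lambda\hat u(w)+(c-rw)\hat u'(w)\bigr),\qquad w\in[b,w_s),
\]
is correct and is indeed the natural way to read the paper's unproved corollary. From it, the sign $\theta^\ast<0$, the middle strict inequality (equivalently, multiplying $\pi^\ast<\tfrac{2(c-rw)}{\mu-r}$ by the negative factor $\sigma\varepsilon\hat u'$), the boundedness (continuity of $\hat u,\hat u'$ on $[b,w_s]$), and the limit $\theta^\ast(w)\to 0$ as $w\to w_s-$ (since $c-rw_s=0$ and $\hat u(w_s)=0$) all follow exactly as you say.

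However, there is a genuine gap in the remaining inequality
\[
\frac{2\sigma\varepsilon}{\mu-r}(c-rb)\hat u'(b)\;\le\;\frac{2\sigma\varepsilon}{\mu-r}(c-rw)\hat u'(w),
\]
which you correctly flag as the obstacle but do not close. Your reduction is to show $\eta(w):=(c-rw)\hat u'(w)$ is non-decreasing; that is in fact a bit more than is needed (one only needs $\eta(w)\ge\eta(b)$), but in either form the claim is not established by your sketch. The issue is real: writing $\eta'=-r\hat u'+(c-rw)\hat u''$ and substituting $\hat u''=(\hat u')^2\bigl(\tfrac{R}{f}-\varepsilon\bigr)$ with $f:=(rw-c)\hat u'-\lambda\hat u>0$, one gets $\eta'=-\hat u'\bigl[r+\tfrac{R\,y}{f}-\varepsilon y\bigr]$ with $y:=(rw-c)\hat u'>0$, and the bracket is not obviously nonnegative. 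When $\hat u$ is convex (for instance $r\le\lambda$, or $\varepsilon$ small; cf.\ Proposition~\ref{psiconvex}) both terms in $\eta'$ are nonnegative and the monotonicity is immediate, but in the concave regime ($r>\lambda$ and $\varepsilon$ large), which the paper explicitly establishes can occur, the sign of the bracket is unclear and neither the convexity of $\hat v$, nor Lemma~\ref{pi_bdd}, nor the rewriting $\varepsilon\eta=-\lambda\ln\hat v-R(\hat v')^2/(\hat v\hat v'')$ that you propose, closes the estimate without further input. Equivalently, using $\eta=-(f+\lambda\hat u)$, the first inequality is precisely $f(w)\le f(b)+\lambda(1-\hat u(w))$, i.e.\ that $y=(rw-c)\hat u'$ attains its maximum at $w=b$, and no argument in your write-up actually proves this. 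Until that step is supplied, the corollary's two-sided estimate is not established (although the substantive conclusions — boundedness, sign, and the limit at $w_s$, which are what the verification theorem actually uses — are).
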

It will be verified later that $\pi^\ast(w)$, $\theta^\ast(w)$ are the optimal controls for $w\in(b,w_s)$. Observe that the upper bound for $\pi^\ast$ given by Lemma \ref{pi_bdd} is $\varpi$. In fact, we can tighten the bound to $\pi_0$ and show $\pi^\ast$ is non-increasing with respect to $\varepsilon$. 

\begin{prop}\label{pihat_eps}
$\pi^\ast(w;\varepsilon)$ is non-increasing in $\varepsilon$ for $\varepsilon\geq 0$. In particular, $\pi^\ast(w;\varepsilon)\leq \pi_0(w)$.
\end{prop}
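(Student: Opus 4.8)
The plan is to fix $w\in(b,w_s)$ and show that $\varepsilon \mapsto \pi^\ast(w;\varepsilon)$ is non-increasing by a comparison/monotonicity argument run on the original equation \eqref{uODE} rather than the transformed one, since the dependence on $\varepsilon$ is most transparent there. The key identity is \eqref{pi_eq_u}, which I would rewrite (using $\hat u'<0$) as
\[
\frac{\mu-r}{2}\,\pi^\ast(w;\varepsilon) = c - rw + \lambda\,\frac{\hat u(w;\varepsilon)}{\hat u'(w;\varepsilon)}.
\]
Thus, since $c-rw>0$ is independent of $\varepsilon$, it suffices to prove that the ratio $\hat u/\hat u'$ is non-increasing in $\varepsilon$ at every fixed $w$; equivalently, writing $\hat u = \frac1\varepsilon \ln\hat v$, that $\frac{1}{\varepsilon}\,\frac{\hat v(w;\varepsilon)\ln\hat v(w;\varepsilon)}{\hat v'(w;\varepsilon)}$ is non-increasing in $\varepsilon$. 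I expect the cleanest route is to establish instead the pointwise monotonicity $\varepsilon \mapsto \hat u(w;\varepsilon)$ together with a monotonicity of a suitably normalized derivative, using that $\psi(\cdot;\varepsilon)=\hat u(\cdot;\varepsilon)$ is non-decreasing in $\varepsilon$ (already noted in Section~\ref{sec:formulation}) as the starting point.

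The main technical step is a differential-inequality argument. Assume for the moment enough smoothness in $\varepsilon$ (which can be justified a posteriori from the ODE \eqref{uODE} and the implicit function theorem, or circumvented by a finite-difference comparison); let $u^{(1)}=\hat u(\cdot;\varepsilon_1)$ and $u^{(2)}=\hat u(\cdot;\varepsilon_2)$ with $\varepsilon_1<\varepsilon_2$. Both solve \eqref{uODE} with their respective $\varepsilon$, and both satisfy the same boundary data $u(b)=1$, $u(w_s)=0$. From \eqref{uODE}, at an interior point where $u^{(1)}=u^{(2)}$ and, say, $(u^{(1)})'=(u^{(2)})'$, the larger $\varepsilon$ forces, via $\lambda u = -R(u')^2/(\varepsilon(u')^2+u'') + (rw-c)u'$, a larger $u''$ (because increasing $\varepsilon$ in the denominator makes the negative term less negative, so $u''$ must increase to compensate — here one uses $\varepsilon(u')^2+u''>0$ from Proposition~\ref{uhat_reg}). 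This is exactly a comparison-type ordering of the second derivatives on the contact set, which, combined with the common boundary values, should propagate to $(u^{(1)})' \le (u^{(2)})'$ wherever $u^{(1)}=u^{(2)}$ — but since both are known to be decreasing and bounded between the same barriers, a more robust packaging is to compare the quantities $\pi^\ast(\cdot;\varepsilon_i)$ directly: subtract the two copies of \eqref{pi_eq_u}, and analyze the sign of $\pi^\ast(\cdot;\varepsilon_1)-\pi^\ast(\cdot;\varepsilon_2)$ at an interior extremum, deriving a contradiction from \eqref{uODE} and the fixed boundary behavior $\pi^\ast \to 0$ at $w_s$ (Corollary after Lemma~\ref{pi_bdd}) together with the lower bound $\pi^\ast>0$.

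Once the monotonicity in $\varepsilon$ is in hand, the special case $\pi^\ast(w;\varepsilon)\le\pi_0(w)$ follows by letting $\varepsilon\downarrow 0$: \eqref{uODE} degenerates to the non-robust equation whose solution is $\psi_0$ with $\pi^\ast(\cdot;0)=\pi_0$, and continuity of $\hat u(\cdot;\varepsilon)$ (hence of $\hat v$ and $\pi^\ast$) in $\varepsilon$ down to $0$ — which can be obtained from the uniform bounds $\psi_0\le\hat u\le\mathfrak p$ and stability of viscosity solutions, or from the $C^2$ a priori estimates — gives $\pi^\ast(w;\varepsilon)\le\lim_{\varepsilon'\downarrow0}\pi^\ast(w;\varepsilon')=\pi_0(w)$. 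The hard part will be making the comparison of second derivatives on the contact set fully rigorous without assuming $C^1$-in-$\varepsilon$ regularity of the solution family; I anticipate the safest approach is to avoid differentiating in $\varepsilon$ altogether and instead run the maximum-principle argument on the difference $\pi^\ast(\cdot;\varepsilon_1)-\pi^\ast(\cdot;\varepsilon_2)$, exploiting \eqref{pi_eq_u} to convert statements about $\pi^\ast$ into statements about $\hat u$ and $\hat u'$, for which ordering is available from the boundary data and the structure of \eqref{uODE}.
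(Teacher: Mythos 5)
Your high-level strategy — comparing $\pi^\ast(\cdot;\varepsilon_1)$ and $\pi^\ast(\cdot;\varepsilon_2)$ at an interior extremum of their difference, with boundary control — matches the paper's. But the crucial technical mechanism that actually produces a contradiction at the interior extremum is missing from your sketch. Subtracting two copies of \eqref{pi_eq_u} gives a relation between $\pi^\ast_1-\pi^\ast_2$ and $\hat u_i/\hat u_i'$, but at an interior minimum of $\pi^\ast_1-\pi^\ast_2$ the only extra information available is the first-order condition $(\pi^\ast_1)'(w_0)=(\pi^\ast_2)'(w_0)$, and nothing in \eqref{pi_eq_u} lets you use that. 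What's needed — and what the paper does — is to differentiate the transformed ODE \eqref{vODE} once and rearrange to get a \emph{first-order} ODE for $\pi^\ast$ alone (equation \eqref{Dpi_eq_v}): $\frac{\mu-r}{2}(\pi^\ast)'=R+\lambda-r+\frac{\mu-r}{\sigma^2}\frac{rw-c}{\pi^\ast}+\lambda\ln\hat v$. At the hypothetical negative minimum $w_0$, equating the two copies via $(\pi^\ast_1)'(w_0)=(\pi^\ast_2)'(w_0)$ and using $\hat v_1\le\hat v_2$ (from comparison for the convex problem), $\pi^\ast_i>0$, and $rw_0-c<0$ yields a strict inequality $0>0$. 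That derivative relation is the heart of the proof and cannot be replaced by manipulating \eqref{pi_eq_u} directly.

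Two further issues. First, your contact-set reasoning has the sign backwards: fixing $u,u'$ in \eqref{uODE} and increasing $\varepsilon$ forces $\varepsilon(u')^2+u''$ to stay constant, hence $u''$ must \emph{decrease}, not increase; in any case that route is moot because $\hat u_1\le\hat u_2$ with equality only at the endpoints, so there are no interior contact points with matched first derivatives. Second, you invoke $\pi^\ast\to0$ at $w_s$ but say nothing explicit about the left endpoint; the paper needs and proves the ordering $\pi^\ast_1(b)\ge\pi^\ast_2(b)$, obtained by passing from $\hat u_1\le\hat u_2$ (with equality at $b$) to $\hat u_1'(b)\le\hat u_2'(b)<0$ and then reading off \eqref{pi_eq_u} at $w=b$. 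Your treatment of the $\varepsilon_1=0$ case via continuity in $\varepsilon$ is a plausible alternative to the paper's direct verification that $\pi_0$ satisfies \eqref{Dpi_eq_v} with $\hat v\equiv1$, but it requires establishing the claimed stability, which you only sketch.
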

\begin{proof}
Let $0<\varepsilon_1<\varepsilon_2$ and write $\hat{u}_i(w)$ for $\hat{u}(w;\varepsilon_i)$, $i=1,2$. $\hat{v}_i$ and $\pi^\ast_i$ are defined similarly. First of all, by comparison principle for problem \eqref{P2}, we have $\hat{v}_1\leq \hat{v}_2$ and thus $\hat{u}_1\leq \hat{u}_2$. Since $\hat{u}_1(b)=\hat{u}_2(b)=1$, we deduce
\[\hat{u}'_1(b)=\lim_{w\rightarrow b+}\frac{\hat{u}_1(w)-1}{w-b}\leq \lim_{w\rightarrow b+}\frac{\hat{u}_2(w)-1}{w-b}=\hat{u}'_2(b).\] 
Let $w\rightarrow b+$ in equation \eqref{pi_eq_u}, we see that 
\[\lambda=\left(\frac{\mu-r}{2}\pi^\ast(b;\varepsilon)+rb-c\right)\hat{u}'(b;\varepsilon).\]
Since $\hat{u}'_1(b)\leq \hat{u}'_2(b)<0$, we must have $\pi^\ast_1(b)\geq \pi^\ast_2(b)$. By Lemma \ref{pi_bdd}, we also have $\pi^\ast_1(w_s)= \pi^\ast_2(w_s)=0$.\footnote{By $\pi^\ast(w_s)$, we mean $\lim_{w\rightarrow w_s-}\pi^\ast(w)$ since $\hat{v}''(w)$ may not exist at $w=w_s$.} Claim that $\pi^\ast_1(w)\geq \pi^\ast_2(w)$ for all $w\in[b,w_s]$.

From equation \eqref{vODE}, we obtain
\begin{equation*}
\lambda \hat{v}\ln\hat{v}=\frac{\mu-r}{2}\hat{v}'\pi^\ast+(rw-c)\hat{v}', \quad w\in(b,w_s).
\end{equation*}
By Corollary \ref{vhatCinfty}, we can differentiate the above equation. After rearranging terms, we get
\begin{equation}\label{Dpi_eq_v}
\frac{\mu-r}{2}(\pi^\ast)'=R+\lambda-r+\frac{\mu-r}{\sigma^2}\frac{rw-c}{\pi^\ast}+\lambda\ln\hat{v}, \quad w\in(b,w_s).
\end{equation}
Suppose on the contrary, $\pi^\ast_1-\pi^\ast_2$ attains negative minimum at a point $w_0\in(b,w_s)$. By first order condition, we have $(\pi^\ast_1)'(w_0)=(\pi^\ast_2)'(w_0)$. Equation \eqref{Dpi_eq_v} then yields the contradiction:
\[0=\frac{\mu-r}{\sigma^2}(rw_0-c)\frac{\pi^\ast_1(w_0)-\pi^\ast_2(w_0)}{\pi^\ast_1(w_0)\pi^\ast_2(w_0)}+\lambda(\ln\hat{v}_2(w_0)-\ln\hat{v}_1(w_0))>0,\]
where we used $\pi^\ast_i>0$, $\hat{v}_2\geq\hat{v}_1$ in $(b,w_s)$, and the assumption $\pi^\ast_1(w_0)-\pi^\ast_2(w_0)<0$. Therefore, the claim holds. 

If $\varepsilon_1=0$, i.e. $\pi^\ast_1=\pi_0$, then simple computation shows $\pi^\ast_1$ satisfies \eqref{Dpi_eq_v} with $\hat{v}_1:=1$. Exactly the same comparison argument implies $\pi^\ast_1\geq\pi^\ast_2$ everywhere on $[b,w_s]$. 
\end{proof}

\begin{prop}\label{pi_Lip}
$\pi^\ast$ is Lipschitz continuous in $(b,w_s)$ and satisfies
\begin{equation}\label{tangent_line_at_ws}
\lim_{w\rightarrow w_s-}(\pi^\ast)'(w)=-\frac{\mu-r}{\sigma^2(d-1)}=\pi_0'(w_s).
\end{equation}
\end{prop}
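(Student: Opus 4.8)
The plan is to reduce Lipschitz continuity to boundedness of $(\pi^\ast)'$ on the bounded interval $(b,w_s)$, and then pin down the one-sided limit at $w_s$ by a phase-line analysis of an autonomous ODE obtained from a logarithmic change of the spatial variable.

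\emph{Step 1: reduction.} By Corollary \ref{vhatCinfty} and the identity $\pi^\ast=-\frac{\mu-r}{\sigma^2}\,\hat v'/\hat v''$ with $\hat v''>0$ on $[b,w_s)$, the map $\pi^\ast$ is $C^\infty$ on $[b,w_s)$, so $(\pi^\ast)'$ is continuous there and in particular bounded on every $[b,w_s-\delta]$. Differentiating \eqref{vODE} (equivalently, starting from \eqref{Dpi_eq_v}) gives
\[\frac{\mu-r}{2}(\pi^\ast)'(w)=R+\lambda-r+\frac{\mu-r}{\sigma^2}\,\frac{rw-c}{\pi^\ast(w)}+\lambda\ln\hat v(w),\qquad w\in(b,w_s).\]
Since $1\le\hat v\le e^{\varepsilon}$ on $[b,w_s]$, the term $\lambda\ln\hat v$ is bounded, so boundedness of $(\pi^\ast)'$ near $w_s$, hence Lipschitz continuity on $(b,w_s)$, is equivalent to a lower bound $\pi^\ast(w)\ge\kappa(c-rw)$ near $w_s$; and \eqref{tangent_line_at_ws} will follow once I show $h(w):=\pi^\ast(w)/(c-rw)\to h_s:=\frac{\mu-r}{\sigma^2r(d-1)}$ as $w\to w_s-$, since then $(\pi^\ast)'(w)\to\frac{2}{\mu-r}(R+\lambda-rd)$, and $R+\lambda-rd=-\frac{R}{d-1}$ (a purely algebraic consequence of $rd^2-(r+\lambda+R)d+\lambda=0$ in \eqref{Rd}), so the limit equals $-\frac{2R}{(\mu-r)(d-1)}=-\frac{\mu-r}{\sigma^2(d-1)}=\pi_0'(w_s)$.

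\emph{Step 2: the ODE for $h$.} From Proposition \ref{pihat_eps}, $\pi^\ast\le\pi_0$ yields the a priori ceiling $h\le h_s$ on $[b,w_s)$. Differentiating $h$ and inserting the displayed equation gives
\[h'(w)=\frac{1}{c-rw}\Big(\Phi\big(h(w)\big)+\tfrac{2\lambda}{\mu-r}\ln\hat v(w)\Big),\qquad \Phi(h):=rh-\frac{2}{\sigma^2 h}+\frac{2(R+\lambda-r)}{\mu-r},\]
and $h\Phi(h)=rh^2+\tfrac{2(R+\lambda-r)}{\mu-r}h-\tfrac{2}{\sigma^2}$ has one negative root $h_-$ and one positive root, which the quadratic for $d$ in \eqref{Rd} identifies as exactly $h_s$; thus $\Phi(h)=\frac{r}{h}(h-h_s)(h-h_-)<0$ on $(0,h_s)$ and $\Phi(0+)=-\infty$. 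Substituting $s=-\tfrac1r\ln(c-rw)$ (so $s\to+\infty$ as $w\to w_s-$) turns this into the asymptotically autonomous scalar equation $\tfrac{dh}{ds}=\Phi(h)+o(1)$, the perturbation vanishing because $\hat v(w_s)=1$.

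\emph{Step 3: phase-line analysis and conclusion.} Using $\Phi<0$ on $(0,h_s)$, $\Phi(0+)=-\infty$, $\Phi(h_s)=0$, and $h\le h_s$: if $\limsup_{s\to\infty}h<h_s$, then $\tfrac{dh}{ds}\le-c_0<0$ for all large $s$, forcing $h\to-\infty$, impossible since $h>0$; hence $\limsup h=h_s$. If in addition $\liminf h<h_s$, then $h$ must cross some level $h_s-2\eta\in(0,h_s)$ upward for arbitrarily large $s$, which is impossible since $\tfrac{dh}{ds}<0$ there once $s$ is large. Therefore $h(w)\to h_s$, so $\frac{c-rw}{\pi^\ast(w)}\to h_s^{-1}<\infty$; thus $(\pi^\ast)'$ is bounded near $w_s$, hence on all of $(b,w_s)$, which gives the Lipschitz claim, and passing to the limit in the displayed equation yields \eqref{tangent_line_at_ws}. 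The main obstacle is Step 3: the limit at $w_s$ is \emph{not} governed by a stable equilibrium ($h_s$ is in fact repelling for the unperturbed flow), so convergence $h\to h_s$ has to be squeezed out of the singular barrier $\Phi(0+)=-\infty$ from below and the hard a priori ceiling $h\le h_s$ from Proposition \ref{pihat_eps} from above, while carefully tracking the vanishing $\ln\hat v$ perturbation; identifying the relevant root of $\Phi$ with $h_s$ and simplifying the resulting limit of $(\pi^\ast)'$ are routine algebra based on the defining quadratic of $d$.
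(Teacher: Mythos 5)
Your proof is correct, and it takes a genuinely different route to the tangency at $w_s$. The paper works directly with $(\pi^\ast)'$: it uses the same a priori bound $\pi^\ast\le\pi_0$ from Proposition~\ref{pihat_eps} to get $\frac{\mu-r}{2}\limsup(\pi^\ast)'\le R+\lambda-rd=\frac{R}{1-d}<0$, then applies a generalized (Cauchy) l'H\^{o}pital rule to the $\frac{0}{0}$ ratio $\frac{rw-c}{\pi^\ast}$ in \eqref{Dpi_eq_v} to obtain a matching lower bound on $\liminf(\pi^\ast)'$ in terms of $\limsup(\pi^\ast)'$, and closes a chain of inequalities that forces $\liminf=\limsup=-\frac{\mu-r}{\sigma^2(d-1)}$. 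You instead rescale to $h=\pi^\ast/(c-rw)$, pass to the logarithmic variable $s=-\frac1r\ln(c-rw)$, and obtain the asymptotically autonomous scalar equation $\frac{dh}{ds}=\Phi(h)+o(1)$; the identification of the positive root of $\Phi$ with $h_s$ rests on the same quadratic for $d$. The convergence $h\to h_s$ is then a phase-line argument squeezed between the singular barrier $\Phi(0+)=-\infty$ and the hard ceiling $h\le h_s$ (again Proposition~\ref{pihat_eps}), with an explicit no-upward-crossing argument handling the fact that $h_s$ is a \emph{repelling} zero of $\Phi$. Your version is more geometric and makes the mechanism of convergence transparent (it is the a priori ceiling, not stability, that does the work), and it avoids invoking the generalized l'H\^{o}pital rule; the paper's version is shorter and stays entirely at the level of $\liminf$/$\limsup$ algebra on $(\pi^\ast)'$. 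Both routes bottom out in the same two ingredients: the differentiated ODE \eqref{Dpi_eq_v} and $\pi^\ast\le\pi_0$.
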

\begin{proof}
For Lipschitz continuity, it suffices to show $\pi^\ast$ has bounded first derivative in $(b,w_s)$. Since $\pi^\ast>0$ in $[b,w_s)$, equation \eqref{Dpi_eq_v} implies $(\pi^\ast)'$ is bounded on any subset of $(b,w_s)$ that is away from $w_s$. It remains to show \eqref{tangent_line_at_ws}. Let $\ell:=\liminf_{w\rightarrow w_s-}(\pi^\ast)'(w)$ and $L:=\limsup_{w\rightarrow w_s-}(\pi^\ast)'(w)$. By Proposition \ref{pihat_eps}, we have
\[\frac{rw-c}{\pi^\ast}\leq \frac{rw-c}{\pi_0}=-\frac{\sigma^2r(d-1)}{\mu-r}.\]
The above inequality and \eqref{Dpi_eq_v} imply
\begin{equation*}
\frac{\mu-r}{2}(\pi^\ast)'\leq R+\lambda-rd+\lambda\ln\hat{v}.
\end{equation*}
Simple algebra shows $R+\lambda-rd=R/(1-d)<0$. Since $\hat{v}(w)\rightarrow 1$ as $w\rightarrow w_s-$, we know $(\pi^\ast)'$ is negative and bounded away from zero near $w_s$. In particular, the limit superior $L$ satisfies
\begin{equation*}
\frac{\mu-r}{2}L\leq R+\lambda-rd=\frac{R}{1-d}<0.
\end{equation*}
Now, apply generalized l'H\^{o}pital's rule \cite[Theorem II]{Taylor52} to \eqref{Dpi_eq_v}. We deduce
\[\frac{\mu-r}{2}\ell\geq R+\lambda-r+\frac{\mu-r}{\sigma^2}\liminf_{w\rightarrow w_s-}\frac{r}{(\pi^\ast)'(w)}=R+\lambda-r+\frac{\mu-r}{\sigma^2}\frac{r}{L}.\]
This leads to a chain of inequalities which is in fact a chain of equalities:
\begin{equation*}
0>\frac{R}{1-d}\geq \frac{\mu-r}{2}L \geq\frac{\mu-r}{2}\ell \geq R+\lambda-r+r(1-d)=\frac{R}{1-d}.
\end{equation*}
So we have proved $\ell=L=\frac{2R}{(\mu-r)(1-d)}=-\frac{\mu-r}{\sigma^2(d-1)}$.
\end{proof}

We are now ready to prove the verification theorem. For any $C^2$ function $\varphi$ and $\pi, \theta\in\mathbb{R}$, define
\[\mathcal{L}^{\pi, \theta}\varphi(w):=[rw-c+(\mu+\sigma\theta-r)\pi]\varphi'+\frac{1}{2}\sigma^2\pi^2 \varphi''(w).\]
\begin{thm}[Verification theorem]
Suppose $u: [b,\infty)\rightarrow [0,1]$, $\Pi:[b,\infty)\rightarrow \mathbb{R}$ and $\Theta:\mathbb{R}\times [b,\infty)\rightarrow \mathbb{R}$ are measurable functions satisfying the following conditions:
\begin{enumerate}[label=(\roman{*}), ref=(\roman{*})]
\item $u\in C^1[b,w_s]\cap C^2[b,w_s)$; \label{cond:i}
\item $u$ is a solution of 
\begin{equation}\label{eq_infsup}
\lambda u(w)=\inf_\pi \sup_\theta \left\{-\frac{1}{2\varepsilon} \theta^2+\mathcal{L}^{\pi,\theta}u(w)\right\}, \ w\in(b,w_s);
\end{equation} 
\label{cond:ii}
\item $u(b)=1$ and $u(w)=0$ for $w\geq w_s$; \label{cond:iii}
\item $\Pi(w)$ attains the infimum in (ii) for each $w\in(b,w_s)$; $\Theta(\pi, w)$ attains the supremum in \ref{cond:ii} for each $\pi\in\mathbb{R}$ and $w\in(b,w_s)$; \label{cond:iv}
\item $\Pi(w)=\Theta(\pi, w)=0$ if $w\notin (b,w_s)$; \label{cond:v}
\item $\Pi$ is bounded and Lipschitz continuous in $(b,w_s)$; $\Theta$ is bounded on $[\pi_1,\pi_2]\times [b,\infty)$ for any compact interval $[\pi_1,\pi_2]\subset\mathbb{R}$. 
\label{cond:vi}
\end{enumerate}
Then $\psi=u$ on $[b,\infty)$, and $\Pi(\cdot), \Theta(\Pi(\cdot),\cdot)$ are optimal Markovian controls.   
\end{thm}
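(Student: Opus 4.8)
The plan is to run a standard martingale-verification argument in two halves: first show $u \geq \psi$ by exhibiting a near-optimal (in fact, exactly optimal) response of nature to an arbitrary admissible $\bfpi$, and second show $u \leq \psi$ by exhibiting the candidate optimal strategy $\Pi$ and checking that nature cannot do better than $\Theta(\Pi(\cdot),\cdot)$ against it. Throughout, the key device is to apply It\^{o}'s lemma to $e^{-\lambda t}u(W_t)$ (more precisely, to the discounted running-cost-plus-value process) between time $0$ and $\tau_b \wedge t$, use that $u$ solves \eqref{eq_infsup} to control the drift, and then let $t \to \infty$, handling the stopping at $\tau_b$ via the boundary condition $u(b)=1$ and handling the behavior near $w_s$ via $u(w_s)=0$ together with $u \in C^1$.

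For the upper bound $\psi \leq u$: fix any admissible $\bfpi \in \mathscr{A}$ and let $\bftheta_t := \Theta(\bfpi_t, W_t)$ be nature's pointwise-optimal response. Condition \ref{cond:vi} ensures $\bftheta$ is bounded (since $\bfpi$ is a.s. bounded), hence the associated measure $\mathbb{Q}$ lies in $\mathscr{Q}$, and the wealth SDE \eqref{QW} has a unique strong solution. Apply It\^{o} to $e^{-\lambda t} u(W_{t \wedge \tau_b})$; the generator term is $\mathcal{L}^{\bfpi_t, \bftheta_t} u(W_t) - \lambda u(W_t) \leq -\frac{1}{2\varepsilon}\bftheta_t^2 - \lambda 1_{\{\tau_b < t\}} + (\text{something} \leq 0)$ by \ref{cond:ii} and \ref{cond:iv} — here one uses that $u$ solves the $\inf\sup$ and $\bftheta_t$ attains the inner sup, so the bracket is bounded above by the value $\lambda u$ achieved after also taking the inf over $\pi$, which is $\leq$ the value for the particular $\bfpi_t$. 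Taking expectations under $\mathbb{Q}$, using that the stochastic integral is a true martingale (bounded integrand $\sigma\bfpi_t u'(W_t)$ on $[0,\tau_b]$, since $u' \in C[b,w_s]$), and letting $t \to \infty$ with $u \in [0,1]$ bounded and $e^{-\lambda t} \to 0$, one obtains after rearranging
\[
u(w) \geq \mathbb{E}^{\mathbb{Q}}_w\!\left[\int_0^\infty e^{-\lambda s}\Big(\lambda 1_{\{\tau_b < s\}} - \tfrac{1}{2\varepsilon}\bftheta_s^2\Big)ds\right],
\]
where the boundary values $u(b)=1$, $u(w)=0$ for $w \geq w_s$ are used to identify the limit of the boundary term $e^{-\lambda t}u(W_{t\wedge\tau_b})$ with $\lambda\,\mathbb{E}^{\mathbb{Q}}[\int e^{-\lambda s}1_{\{\tau_b<s\}}ds]$; one must be careful that $W$ is absorbed / the process may approach $w_s$, which is where $u(w_s)=0$ and the boundedness of $u$ do the work. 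Taking $\sup_{\mathbb{Q}}$ then $\inf_{\bfpi}$ gives $u \geq \psi$.

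For the lower bound $\psi \geq u$: take the specific strategy $\bfpi^\ast_t = \Pi(W_t)$. By \ref{cond:vi}, $\Pi$ is bounded and Lipschitz on $(b,w_s)$ and vanishes outside, so the SDE for $W$ under any $\mathbb{Q} \in \mathscr{Q}$ has a unique strong solution and $\bfpi^\ast \in \mathscr{A}$. Now for an arbitrary $\mathbb{Q} \in \mathscr{Q}$ with distortion $\bftheta$, apply It\^{o} to $e^{-\lambda t}u(W_{t\wedge\tau_b})$ again; this time the generator term satisfies $\mathcal{L}^{\Pi(W_t),\bftheta_t}u(W_t) - \lambda u(W_t) \geq -\frac{1}{2\varepsilon}\bftheta_t^2 - \lambda 1_{\{\tau_b<t\}}$ because $\Pi$ attains the outer inf and any $\theta$ gives at most the inner sup value; equivalently $-\frac{1}{2\varepsilon}\bftheta_t^2 + \mathcal{L}^{\Pi,\bftheta_t}u \leq \lambda u$ would be the wrong direction, so one instead writes the bracket for $(\Pi(W_t),\bftheta_t)$ as $\leq \sup_\theta\{\cdots\} = \lambda u$ only after the inf is realized by $\Pi$ — that is, $-\frac{1}{2\varepsilon}\bftheta_t^2 + \mathcal{L}^{\Pi(W_t),\bftheta_t}u(W_t) \leq \sup_\theta\{-\frac{1}{2\varepsilon}\theta^2 + \mathcal{L}^{\Pi(W_t),\theta}u(W_t)\} = \lambda u(W_t)$, using that $\Pi$ attains the infimum. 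This yields the reverse inequality, and then taking $\inf$ over the choice $\bfpi^\ast$ (trivial) and $\sup$ over $\mathbb{Q}$ gives $\psi \leq u$ with equality of the two bounds; the optimality of $(\Pi, \Theta(\Pi(\cdot),\cdot))$ follows since equality holds throughout when $\bftheta_t = \Theta(\Pi(W_t),W_t)$ (which is admissible by \ref{cond:vi}).

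The main obstacle I anticipate is the boundary analysis at the safe level $w_s$: the process $W$ under $\bfpi^\ast$ may fail to be absorbed and could spend substantial time near $w_s$, and $u'' $ need not be bounded there (only $u \in C^1[b,w_s]$, with possible blow-up of $u''$ noted in the remarks), so one cannot naively apply It\^{o} past $w_s$. The clean fix is to only ever stop at $\tau_b$ and on the wealth staying in $[b,\infty)$, use $u \equiv 0$ on $[w_s,\infty)$ together with $u \in C^1$ and $u' (w_s) = 0$ (Remark \ref{rmk_zero_derivative_at_ws}) to justify that $u(W_{\cdot\wedge\tau_b})$ is still a valid It\^{o} process — or, more robustly, localize by a sequence of stopping times $\tau_b \wedge \sigma_n$ with $\sigma_n := \inf\{t: W_t \geq w_s - 1/n\}$, apply It\^{o} on $[b, w_s-1/n]$ where everything is smooth, and pass to the limit using dominated convergence and the continuity/boundedness of $u$. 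A secondary, routine obstacle is verifying the stochastic integrals are genuine martingales rather than local martingales: on $[0,\tau_b]$ the integrand $e^{-\lambda s}\sigma\bfpi_s u'(W_s)$ is bounded (by \ref{cond:i} and admissibility of $\bfpi$, resp. boundedness of $\Pi$), so this is immediate once the localization is in place.
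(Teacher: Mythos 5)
Your overall architecture is correct — a two-sided verification where one half fixes the candidate optimal investment $\Pi$ against an arbitrary measure $\mathbb{Q}$, and the other half fixes an arbitrary $\bfpi$ against nature's best response $\Theta(\bfpi,\cdot)$, with It\^{o} applied to the killed/discounted value process and localization to dodge the possible blowup of $u''$ at $w_s$. This is essentially the paper's plan (the paper packages the $e^{-\lambda t}$ discount into a coffin-state jump at $\tau_d$ rather than writing it explicitly, but these are equivalent). The problem is that the directions of the key inequalities, and consequently the conclusions of your two halves, are swapped.

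Concretely: when you fix an arbitrary $\bfpi\in\mathscr{A}$ and set $\bftheta_t := \Theta(\bfpi_t,W_t)$, condition (iv) says $\bftheta_t$ attains the inner supremum, so
\[
-\tfrac{1}{2\varepsilon}\bftheta_t^2 + \mathcal{L}^{\bfpi_t,\bftheta_t}u(W_t) \;=\; \sup_\theta\Bigl\{-\tfrac{1}{2\varepsilon}\theta^2 + \mathcal{L}^{\bfpi_t,\theta}u(W_t)\Bigr\} \;\geq\; \inf_\pi\sup_\theta\{\cdots\} \;=\; \lambda u(W_t),
\]
i.e.\ $\mathcal{L}^{\bfpi_t,\bftheta_t}u - \lambda u \geq \frac{1}{2\varepsilon}\bftheta_t^2$, not $\leq$ as you wrote. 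This forces $u(w) \leq \mathbb{E}^{\mathbb{Q}^\ast}[\cdots]$, and since the right side is $\leq \sup_{\mathbb{Q}}\mathbb{E}^{\mathbb{Q}}[\cdots]$ for the given $\bfpi$ and $\bfpi$ is arbitrary, this half actually proves $u\leq\psi$ — not the $u\geq\psi$ you claim. Moreover, your step ``taking $\sup_{\mathbb{Q}}$ then $\inf_{\bfpi}$'' cannot be performed: in this half $\mathbb{Q}$ is not arbitrary but is the specific measure induced by $\Theta$, so you may only \emph{lower}-bound $\sup_\mathbb{Q}$ by it, which is exactly what the corrected direction uses. Symmetrically, in your second half with $\bfpi^\ast_t=\Pi(W_t)$ and arbitrary $\bftheta$, you first state the (wrong) bound $\mathcal{L}^{\Pi,\bftheta}u-\lambda u \geq -\frac{1}{2\varepsilon}\bftheta^2 - \cdots$, then say the inequality $-\frac{1}{2\varepsilon}\bftheta^2 + \mathcal{L}^{\Pi,\bftheta}u \leq \lambda u$ ``would be the wrong direction,'' and then derive precisely that inequality — it \emph{is} the right one (since $\Pi$ attains the outer infimum, the inner sup at $\Pi$ equals $\lambda u$, and any particular $\theta$ is $\leq$ that sup), and it yields $u(w)\geq\mathbb{E}^{\mathbb{Q}}[\cdots]$ for every $\mathbb{Q}$, hence $u\geq\sup_\mathbb{Q}\mathbb{E}^{\mathbb{Q}}\geq\psi$. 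So this half proves $u\geq\psi$, not $u\leq\psi$. The two conclusions and the signs of the generator bounds must all be flipped for the argument to go through.

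Two smaller omissions worth flagging: first, in the $u\leq\psi$ half you need to argue that restricting attention to strategies that stay put once the wealth hits $b$ or $w_s$ does not change the infimum (the paper introduces the subclass $\mathscr{A}_0$ and explains why this is harmless); your $1_{\{\tau_b<\tau\}}\leq 1_{\{\tau_b<\tau_d\}}$ estimate alone does not discharge the contribution of $\int\frac{1}{2\varepsilon}\bftheta^{\ast 2}$ over $[\tau_b\wedge\tau,\tau_d]$ unless $\bftheta^\ast$ vanishes there, which follows from condition (v) only if the wealth actually sits outside $(b,w_s)$ afterwards. Second, for the strong solvability and admissibility in the $u\geq\psi$ half you need to use that $\Pi$ is globally Lipschitz and bounded and that the induced $\bftheta^\ast$ is bounded (condition (vi)) so that the measure actually belongs to $\mathscr{Q}$; you gesture at this but the bookkeeping matters since $\mathscr{Q}$ carries explicit integrability requirements.
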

\begin{proof}
Same as \cite{BayraktarYoung07b}, we let $\Delta$ be the ``coffin state'' and $[b,\infty)\cup\{\Delta\}$ be the one point compactification of $[b,\infty)$. Define the extension of $u$ to $[b,\infty)\cup \{\Delta\}$ by assigning $u(\Delta)=0$.

1. Let $w>b$. By conditions \ref{cond:v} and \ref{cond:vi}, the SDE
\begin{equation*}
dW_t=[rW_t+(\mu-r)\Pi(W_t)-c]dt+\sigma \Pi(W_t) dB_t, \quad W_0=w
\end{equation*}
has a unique strong solution $W^{w,\Pi}$ w.r.t. the filtered probability space $(\Omega,\mathcal{F},\mathbb{F},\mathbb{P})$. Let $\bfpi^\ast_t:=\Pi(W^{w,\Pi}_t)$ and write $W^{w,\bfpi^\ast}:=W^{w,\Pi}$. $\bfpi^\ast\in\mathscr{A}$ since $\Pi$ is bounded and measurable. Define $\tau^\ast_b:=\inf\{t\geq 0: W^{w,\bfpi^\ast}\leq b\}$ and $\tau^\ast:=\inf\{t\geq 0: W^{w,\bfpi^\ast}\geq w_s\}\wedge \tau_d$. 

Let $\mathbb{Q}\in\mathscr{Q}$ be any candidate measure with corresponding drift distortion process $\bftheta$. By Girsanov theorem, $B_t^{\mathbb{Q}}:=B_t-\int_0^t \bftheta_s ds$ is a $\mathbb{Q}$-Brownian motion. $W^{w,\bfpi^\ast}$ satisfies
\begin{equation*}
dW_t=[rW_t+(\mu+\sigma\bftheta_t-r)\bfpi^\ast_t-c]dt+\sigma \bfpi^\ast_t dB^{\mathbb{Q}}_t, \quad W_0=w.
\end{equation*}
Recall that $\tau_d$ is the first jump time of the $\mathbb{P}$-Poisson process $N$ with rate $\lambda$ that is independent of $\mathbb{F}$. The definition of $\mathscr{Q}$ ensures that $N$ is also a $\mathbb{Q}$-Poisson process with the same rate. Let $\ubar{W}^{w,\bfpi^\ast}_t:=W_t^{w,\bfpi^\ast}1_{\{t<\tau_d\}}+\Delta 1_{\{t\geq \tau_d\}}$. 
$\ubar{W}^{w,\bfpi^\ast}$ is a progressively measurable process in the enlarged filtration $\mathbb{H}$ which includes information generated by $N$. 
It is easy to see that $\ubar{W}^{w,\bfpi^\ast}$ satisfies:
\[dW_t=[rW_t+(\mu+\sigma\bftheta_t-r)\bfpi^\ast_t-c]dt+\sigma\bfpi^\ast_t dB^{\mathbb{Q}}_t-(\Delta-W_{t-})dN_t, \quad W_0=w.\]
Applying It\^{o}'s lemma to $u(\ubar{W}_t^{w,\bfpi^\ast})$ and using that $u(\Delta)=0$, we have
\begin{equation*}
\begin{aligned}
u(\ubar{W}^{w,\bfpi^\ast}_{\tau^\ast_b\wedge\tau^\ast})
&=u(w)+\int_0^{\tau^\ast_b\wedge\tau^\ast} \mathcal{L}^{\bfpi^\ast_s,\bftheta_s}u(W^{w,\bfpi^\ast}_s)-\lambda u(W^{w,\bfpi^\ast}_{s}) ds\\
&\quad +\int_0^{ \tau^\ast_b\wedge \tau^\ast}u'(W^{w,\bfpi^\ast}_s)\sigma \bfpi^\ast_s dB^{\mathbb{Q}}_s-u(W^{w,\bfpi^\ast}_{s-})d(N_s-\lambda s).
\end{aligned}
\end{equation*}
Since $u$, $u'$ and $\Pi$ are bounded on $[b,w_s]$, the It\^{o} integral vanishes upon taking $\mathbb{Q}$-expectation and we get
\[\mathbb{E}^{\mathbb{Q}}\left[u(\ubar{W}^{w,\bfpi^\ast}_{\tau^\ast_b\wedge\tau^\ast})\right]=u(w)+\mathbb{E}^{\mathbb{Q}}\left[\int_0^{\tau^\ast_b\wedge\tau^\ast} \mathcal{L}^{\bfpi^\ast_s,\bftheta_s}u(W^{w,\bfpi^\ast}_s)-\lambda u(W^{w,\bfpi^\ast}_{s}) ds\right].
\]
Conditions \ref{cond:ii}, \ref{cond:iv} and that $\bfpi^\ast=\Pi(W^{w,\bfpi^\ast})$ imply for $0\leq s<\tau_b^\ast\wedge \tau^\ast$,
\begin{equation*}
\begin{aligned}
0&=\inf_\pi \sup_\theta \left\{-\frac{1}{2\varepsilon} \theta^2+\mathcal{L}^{\pi,\theta}u(W^{w,\bfpi^\ast}_s)\right\}-\lambda u(W^{w,\bfpi^\ast}_s)\\
&=\sup_\theta \left\{-\frac{1}{2\varepsilon} \theta^2+\mathcal{L}^{\bfpi^\ast_s,\theta}u(W^{w,\bfpi^\ast}_s)\right\}-\lambda u(W^{w,\bfpi^\ast}_s)\\
&\geq -\frac{1}{2\varepsilon}\bftheta_s^2+\mathcal{L}^{\bfpi^\ast_s,\bftheta_s}u(W^{w,\bfpi^\ast}_s)-\lambda u(W^{w,\bfpi^\ast}_s).
\end{aligned}
\end{equation*}
So we have
\begin{equation*}
\mathbb{E}^{\mathbb{Q}}\left[u(\ubar{W}^{w,\bfpi^\ast}_{\tau^\ast_b\wedge \tau^\ast})\right]\leq u(w)+\mathbb{E}^{\mathbb{Q}}\left[ \int_0^{\tau^\ast_b\wedge\tau^\ast} \frac{1}{2\varepsilon}\bftheta^2_s ds\right].
\end{equation*}
Equivalently,
\begin{equation*}
u(w)\geq \mathbb{E}^{\mathbb{Q}}\left[u(\ubar{W}^{w,\bfpi^\ast}_{\tau^\ast_b\wedge \tau^\ast})-\int_0^{\tau^\ast_b\wedge\tau^\ast} \frac{1}{2\varepsilon}\bftheta^2_s ds\right].
\end{equation*}
By condition \ref{cond:v}, $W^{w,\bfpi^\ast}$ will stay constant once it reaches the safe level. This means, if the safe level is reached, then death will definitely occur before ruin. So we have $\{\tau^\ast_b<\tau^\ast\}=\{\tau^\ast_b<\tau^\ast_d\}$. Since $u(\ubar{W}^{w,\bfpi^\ast}_{\tau^\ast_b\wedge \tau^\ast})=1_{\{\tau^\ast_b<\tau^\ast\}}=1_{\{\tau^\ast_b<\tau_d\}}$ and $\tau^\ast_b\wedge \tau^\ast\leq \tau_d$, we get
\begin{equation*}
u(w)\geq \mathbb{E}^{\mathbb{Q}}\left[1_{\{\tau^\ast_b<\tau_d\}}-\int_0^{\tau_d} \frac{1}{2\varepsilon}\bftheta^2_s ds\right].
\end{equation*}
This holds for all $\mathbb{Q}\in\mathscr{Q}$. So
\[u(w)\geq \sup_{\mathbb{Q}\in\mathscr{Q}}\mathbb{E}^{\mathbb{Q}}\left[1_{\{\tau^\ast_b<\tau_d\}}-\int_0^{\tau_d} \frac{1}{2\varepsilon}\bftheta^2_s ds\right]\geq \inf_{\bfpi\in\mathscr{A}}\sup_{\mathbb{Q}\in\mathscr{Q}}\mathbb{E}^{\mathbb{Q}}\left[1_{\{\tau^{w,\bfpi}_b<\tau_d\}}-\frac{1}{\varepsilon}\int_0^{\tau_d} \frac{1}{2}\bftheta^2_s ds\right]=\psi(w),\]
where we put superscripts on $\tau_b$ in the last step to indicate its dependence on the initial wealth and the control.

2. Let $\bfpi\in\mathscr{A}$ be any admissible investment strategy and $W^{w,\bfpi}$ be the solution to the SDE:
\begin{align*}
dW_t&=[rW_t+(\mu-r)\bfpi_t-c]dt+\sigma\bfpi_t dB_t, \quad W_0=w.
\end{align*}
Let $\bftheta^\ast_t:=\Theta(\bfpi_t,W^{w,\bfpi}_t)$. $\bftheta^\ast$ is $\mathbb{F}$-progressively measurable since both $\bfpi$ and $W^{w,\bfpi}$ are, and $\Theta$ is a measurable function. Since $\bfpi_t$ is a.s. bounded uniformly in $t$, condition \ref{cond:vi} ensures $\bftheta^\ast$ satisfies all integrability conditions in the definition of $\mathscr{Q}$. So there exists a measure $\mathbb{Q}^\ast\in\mathscr{Q}$ satisfying $\frac{d\mathbb{Q}^\ast_t}{d\mathbb{P}_t}=\mathcal{E}(\int_0^t\bftheta^\ast_s dB_s)$ where $\mathcal{E}$ denotes the stochastic exponential. It follows from Girsanov theorem that $B^{\mathbb{Q}^\ast}_t:=B_t-\int_0^t \bftheta^\ast_s ds$ is a $\mathbb{Q}^\ast$-Brownian motion. So $W^{w,\bfpi}$ satisfies:
 \[dW_t=[rW_t+(\mu+\sigma\bftheta^\ast_t-r)\bfpi_t-c]dt+\sigma\bfpi_t dB^{\mathbb{Q}^\ast}_t, \quad W_0=w.\]
 Define $\tau_b:=\inf\{t\geq 0: W^{w,\bfpi}_t\leq b\}$ and $\tau:=\inf\{t\geq 0: W^{w,\bfpi}_t\geq w_s\}\wedge \tau_d$. Same as before, we work with the larger filtration $\mathbb{H}$ and consider the process $\ubar{W}^{w,\bfpi}=W_t^{w,\bfpi}1_{\{t<\tau_d\}}+\Delta 1_{\{t\geq \tau_d\}}$ which satisfies the SDE:
\[dW_t=[rW_t+(\mu+\sigma\bftheta^\ast_t-r)\bfpi_t-c]dt+\sigma\bfpi_t dB^{\mathbb{Q}^\ast}_t+(\Delta-W_{t-})dN_t,\quad W_0=w.\]
Again, thanks to the drift distortion $\bftheta^\ast$ being $\mathbb{F}$-adapted, $N$ remains a Poisson process with rate $\lambda$ under $\mathbb{Q}^\ast$. By It\^{o}'s lemma and that $u(\Delta)=0$, we have for any $t\geq 0$,
\begin{align*}
u(\ubar{W}^{w,\bfpi}_{\tau_b\wedge\tau\wedge t})
&=u(w)+\int_0^{\tau_b\wedge\tau\wedge t} -\lambda u(W^{w,\bfpi}_{s})+\mathcal{L}^{\bfpi_s,\bftheta^\ast_s}u(W^{w,\bfpi}_s) ds\\
&\quad +\int_0^{ \tau_b\wedge \tau\wedge t}u'(W^{w,\bfpi})\sigma \bfpi_s dB^{\mathbb{Q}^\ast}_s-u(W^{w,\bfpi}_{s-})d(N_s-\lambda s).
\end{align*}
Taking $\mathbb{Q}^\ast$ expectation yields
\begin{equation*}
\mathbb{E}^{\mathbb{Q}^\ast}\left[u(\ubar{W}^{w,\bfpi}_{\tau_b\wedge \tau\wedge t})\right]= u(w)+\mathbb{E}^{\mathbb{Q}^\ast}\left[ \int_0^{\tau_b\wedge\tau\wedge t}-\lambda u(W^{w,\bfpi}_{s}) +\mathcal{L}^{\bfpi_s,\bftheta^\ast_s}u(W^{w,\bfpi}_s)ds\right],
\end{equation*}
where the It\^{o} integral vanishes because $u$, $u'$ are bounded and $\bfpi$ is $\mathbb{Q}^\ast_t$-a.s. bounded for all $t\geq 0$. By conditions \ref{cond:ii}, \ref{cond:iv}, and our definition of $\bftheta^\ast$, we know
\begin{equation}\label{verification_step2_eq1}
\begin{aligned}
0&=\inf_\pi \sup_\theta \left\{-\frac{1}{2\varepsilon} \theta^2+\mathcal{L}^{\pi,\theta}u(W^{w,\bfpi}_s)\right\}-\lambda u(W^{w,\bfpi}_s)\\
&\leq \sup_\theta \left\{-\frac{1}{2\varepsilon} \theta^2+\mathcal{L}^{\bfpi_s,\theta}u(W^{w,\bfpi}_s)\right\}-\lambda u(W^{w,\bfpi}_s)\\
&=-\frac{1}{2\varepsilon} (\bftheta^\ast_s)^2+\mathcal{L}^{\bfpi_s,\bftheta^\ast_s}u(W^{w,\bfpi}_s)-\lambda u(W^{w,\bfpi}_s)
\end{aligned}
\end{equation}
for $s\in[0,\tau_b\wedge \tau)$. So
\begin{equation}\label{verification_step2_eq2}
\mathbb{E}^{\mathbb{Q}^\ast}\left[u(\ubar{W}^{w,\bfpi}_{\tau_b\wedge \tau\wedge t})\right]\geq u(w)+\mathbb{E}^{\mathbb{Q}^\ast}\left[ \int_0^{\tau_b\wedge\tau\wedge t}\frac{1}{2\varepsilon}(\bftheta^\ast_s)^2 ds\right].
\end{equation}
Letting $t\rightarrow \infty$ and using bounded and monotone convergence theorems, we get
\begin{equation}\label{verification_step2_eq2b}
\mathbb{E}^{\mathbb{Q}^\ast}\left[u(\ubar{W}^{w,\bfpi}_{\tau_b\wedge \tau})\right]\geq u(w)+\mathbb{E}^{\mathbb{Q}^\ast}\left[ \int_0^{\tau_b\wedge\tau}\frac{1}{2\varepsilon}(\bftheta^\ast_s)^2 ds\right].
\end{equation}
Since $u(\ubar{W}^{w,\bfpi}_{\tau_b\wedge \tau})=1_{\{\tau_b<\tau\}}\leq 1_{\{\tau_b<\tau_d\}}$, we obtain
\begin{equation}\label{verification_step2_eq3}
u(w)\leq \mathbb{E}^{\mathbb{Q}^\ast}\left[1_{\{\tau_b<\tau_d\}}-\frac{1}{\varepsilon}\int_0^{\tau_b\wedge\tau}\frac{1}{2}(\bftheta^\ast_s)^2 ds\right].
\end{equation}
Let us first assume $\bfpi$ is an admissible strategy such that $\bfpi=0$ once ruin occurs or safe level is reached, so that the wealth process will stay at the ruin or safe level until death time. Denote by $\mathscr{A}_0$ the collection of such subclass of strategies. Then by condition \ref{cond:v}, we have $\bftheta^\ast_s=0$ for $\tau_b\wedge \tau<s<\tau_d$. Hence
\begin{equation*}
u(w)\leq \mathbb{E}^{\mathbb{Q}^\ast}\left[1_{\{\tau_b<\tau_d\}}-\frac{1}{\varepsilon}\int_0^{\tau_d}\frac{1}{2}\bftheta^\ast_s ds\right]\leq \sup_{\mathbb{Q}\in\mathscr{Q}} \mathbb{E}^{\mathbb{Q}}\left[1_{\{\tau_b<\tau_d\}}-\frac{1}{\varepsilon}\int_0^{\tau_d}\frac{1}{2}(\bftheta_s)^2 ds\right].
\end{equation*}
This holds for any $\bfpi\in\mathscr{A}_0$. So we have
\begin{equation*}
u(w)\leq \inf_{\bfpi\in\mathscr{A}_0}\sup_{\mathbb{Q}\in\mathscr{Q}} \mathbb{E}^{\mathbb{Q}}\left[1_{\{\tau^{w,\bfpi}_b<\tau_d\}}-\frac{1}{\varepsilon}\int_0^{\tau_d}\frac{1}{2}(\bftheta_s)^2 ds\right],
\end{equation*}
where we added superscripts to $\tau_b$ to indicate its dependence on the initial wealth and the control.
It remains to note that controls in $\mathscr{A}\backslash\mathscr{A}_0$ do not yield a smaller infimum because once ruin occurs, it becomes a history that cannot be altered; once safe level is reached, no policy can do a better job than zero ruin probability. Therefore, we actually have
\[u(w)\leq \inf_{\bfpi\in\mathscr{A}}\sup_{\mathbb{Q}\in\mathscr{Q}} \mathbb{E}^{\mathbb{Q}}\left[1_{\{\tau^{w,\bfpi}_b<\tau_d\}}-\frac{1}{\varepsilon}\int_0^{\tau_d}\frac{1}{2}(\bftheta_s)^2 ds\right]=\psi(w).\]

3. As in step 1, let $W^{w,\Pi}$ be the unique strong solution of 
\begin{align*}
dW_t&=[rW_t+(\mu-r)\Pi(W_t)-c]dt+\sigma\Pi(W_t) dB_t, \quad W_0=w,
\end{align*}
and $\bfpi^\ast:=\Pi(W^{w,\Pi})\in\mathscr{A}$. Let 
$\bftheta^\ast_t:=\Theta(\Pi(W^{w,\Pi}_t),W^{w,\Pi}_t)$.
Conditions \ref{cond:v}, \ref{cond:vi} and the measurability of $\Theta, \Pi$ ensures $\bftheta^\ast$ is a bounded, $\mathbb{F}$-progressively measurable process. So there is a measure $\mathbb{Q}^\ast\in\mathscr{Q}$ having $\bftheta^\ast$ as the corresponding drift distortion process.
Repeat the analysis in step 2 using controls $\bfpi^\ast$ and $\bftheta^\ast$. \eqref{verification_step2_eq1} through \eqref{verification_step2_eq2b} now hold with equality. For \eqref{verification_step2_eq3}, since $\bfpi^\ast$ and $\bftheta^\ast$ will both be zero and remain zero until death time once the ruin level or the safe level is reached, we have $\{\tau^\ast_b<\tau^\ast\}=\{\tau^\ast_b<\tau_d\}$ and 
\[\psi(w)=u(w)=\mathbb{E}^{\mathbb{Q}^\ast}\left[1_{\{\tau^\ast_b<\tau_d\}}-\int_0^{\tau_d} \frac{1}{2\varepsilon}(\bftheta^\ast_s)^2 ds\right],\]
where $\tau_b^\ast$ and $\tau^\ast$ denote the ruin time and the minimum of safe and death times, respectively, when the wealth starts at $w$ and is controlled by $\bfpi^\ast$. This proves the optimality of the feedback forms.
\end{proof}

\noindent \textbf{\textit{Proof of Theorem \ref{main_thm}.}} The functions 
\[u(w):=\hat{u}(w)1_{\{w\leq w_s\}}, \ \   \Pi(w):=\pi^\ast(w)1_{(b,w_s)}(w), \ \ \Theta(\pi, w):=\sigma\varepsilon\pi\hat{u}'(w)1_{(b,w_s)}(w)\]
satisfy all conditions of the verification theorem. (i) follows from Proposition \ref{uhat_reg}. (ii) and (iii) hold because $\hat{u}$ solves \eqref{P1} and $F=0$ is equivalent to \eqref{eq_infsup}. (iv) follows from first order conditions and the definition of $\pi^\ast$ (see the beginning of Section~\ref{sec:verification}). (v) is clear from the definition of $\Pi$ and $\Theta$. (vi) holds by Propositions \ref{pi_Lip} and \ref{uhat_reg}; the latter implies $\hat{u}'$ is bounded on $[b,w_s]$.
\hfill \qed
\begin{remark}
Verification can be carried out even if $\pi^\ast$ is only known to be locally Lipschitz continuous, because the optimally controlled wealth process actually never reaches the safe level (see Proposition \ref{never_reach_ws}). What Proposition \ref{pi_Lip} shows on top of the global Lipschitz continuity is that $\pi^\ast(w;\varepsilon)$ is tangent to $\pi_0(w)$ at $w=w_s$ for all $0<\varepsilon<\infty$.
\end{remark}

In the remaining sections, we will speak of $\pi^\ast$, $\theta^\ast$ given by \eqref{pistar} and \eqref{thetastar} as the optimal Markovian controls. It is understood that they are optimal in the interval $(b,w_s)$.

\section{Other properties of the value function and the optimal investment policy}\label{sec:properties}

Let us first summarize some properties of $\psi$ and $\pi^\ast$ that we have already seen.
\begin{itemize}
\item[(i)] $\psi\in C^1[b,w_s]\cap C^2[b,w_s)$ and is strictly decreasing on $[b,w_s]$;
\item[(ii)] $\psi$ is non-decreasing in $\varepsilon$, bounded from below by $\psi_0$ and from above by $\psi_\infty\wedge \mathfrak{p}$;
\item[(iii)] $0<\pi^\ast\leq \pi_0$ in $[b,w_s)$ and $\pi^\ast$ is non-increasing in $\varepsilon$;
\item[(iv)] $\pi^\ast$ is Lipschitz continuous in $(b,w_s)$ and is tangent to $\pi_0$ at the safe level.
\end{itemize} 
In this section, we prove two additional properties. The first one reveals how the concavity of $\psi$ depends on parameters. The second one addresses the question of whether the safe level can be reached by the optimally controlled wealth process. In the non-robust case, \cite{Young04} shows it is never reached in finite time. Same phenomenon exists for our robust problem; the individual either loses the game, or ``wins" the game by dying.

\begin{prop}\label{psiconvex} \ 
\begin{itemize}
\item[(i)] If $r\leq\lambda$, then $\psi$ is convex on $[b,w_s]$. If $r<\lambda$, $\psi$ is strictly convex.
\item[(ii)] If $r>\lambda$, then $\psi$ changes concavity at most once on $[b,w_s]$. If $0\leq \varepsilon\leq \frac{R}{rd-\lambda}$, $\psi$ is strictly convex on $[b,w_s]$. If $\varepsilon>\frac{R}{r-\lambda}$, $\psi$ is strictly concave in $[b,w_0)$ and strictly convex in $(w_0,w_s]$ where $w_0$ is the unique point in $(b,w_s)$ satisfying $(rw_0-c)\psi'(w_0)-\lambda \psi(w_0)=\frac{R}{\varepsilon}$.
\end{itemize}
\end{prop}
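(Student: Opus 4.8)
The plan is to reduce everything to the sign of one scalar function. On $(b,w_s)$, where $\psi\in C^2$ solves \eqref{uODE}, set $\ell(w):=(rw-c)\psi'(w)-\lambda\psi(w)$; rearranging \eqref{uODE} gives $\varepsilon(\psi')^2+\psi''=R(\psi')^2/\ell$, hence
\[
\psi''(w)=(\psi'(w))^2\Bigl(\frac{R}{\ell(w)}-\varepsilon\Bigr),\qquad \ell'(w)=(r-\lambda)\psi'(w)+(rw-c)\psi''(w).
\]
By Proposition \ref{vhatC2} (equivalently Lemma \ref{fv}) applied to $\hat v=e^{\varepsilon\psi}$ one has $\ell>0$ on $[b,w_s)$ and $\ell(w_s)=0$, and $\psi'<0$ on $[b,w_s)$ by Proposition \ref{uhat_reg}. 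So for $\varepsilon>0$ the sign of $\psi''$ on $(b,w_s)$ is the sign of $R-\varepsilon\ell$, and at any point where $\ell=R/\varepsilon$ (i.e. $\psi''=0$) one has $\ell'=(r-\lambda)\psi'$. The whole proposition thus becomes a statement about the level set $\{\ell=R/\varepsilon\}$; $\varepsilon=0$ is the explicitly convex $\psi_0$, handled separately.

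For (i) I would take a maximizer $\bar w$ of $\ell$ on $[b,w_s]$; since $\ell>0=\ell(w_s)$ on $[b,w_s)$, $\bar w\ne w_s$, so either $\ell'(\bar w)=0$ or $\bar w=b$ with $\ell'(b^+)\le0$. In both cases $(r-\lambda)\psi'(\bar w)+(r\bar w-c)\psi''(\bar w)\le0$; because $r\le\lambda$ and $\psi'(\bar w)<0$ the first term is $\ge0$, so $\psi''(\bar w)\ge0$, i.e. $\ell(\bar w)\le R/\varepsilon$, hence $\ell\le R/\varepsilon$ everywhere and $\psi$ is convex. If $r<\lambda$ all inequalities become strict, so $\psi''>0$ on $(b,w_s)$.

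For (ii) (so $r>\lambda$): at any $w_*\in(b,w_s)$ with $\ell(w_*)=R/\varepsilon$ one has $\ell'(w_*)=(r-\lambda)\psi'(w_*)<0$, so $\ell$ can meet the level $R/\varepsilon$ at most once and only transversally downward; together with $\ell(w_s)=0<R/\varepsilon$ this forces exactly one of: $\ell<R/\varepsilon$ on $(b,w_s)$ ($\psi$ convex), or a unique $w_0\in(b,w_s)$ with $\ell>R/\varepsilon$ on $[b,w_0)$ and $\ell<R/\varepsilon$ on $(w_0,w_s)$ ($\psi$ concave then convex, $w_0$ being the unique root of $(rw-c)\psi'-\lambda\psi=R/\varepsilon$); in either case the concavity changes at most once, so it remains to determine $\operatorname{sgn}(\ell(b)-R/\varepsilon)$. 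When $0<\varepsilon\le R/(rd-\lambda)$: $\psi\ge\psi_0$ with $\psi(b)=\psi_0(b)=1$ gives $\psi'(b)\ge\psi_0'(b)=-dr/(c-rb)$, hence $\ell(b)=(c-rb)(-\psi'(b))-\lambda\le rd-\lambda\le R/\varepsilon$, so $\psi''>0$ on $(b,w_s)$ and $\psi$ is strictly convex on $[b,w_s]$. When $\varepsilon>R/(r-\lambda)$, I would compare $\psi$ against $\bar\psi(w):=\bigl(\tfrac{c-rw}{c-rb}\bigr)^{\beta}$ with $\beta:=\tfrac{\lambda}{r}+\tfrac{R}{r\varepsilon}$, which satisfies $0<\beta<1$ precisely because $\varepsilon>R/(r-\lambda)$. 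A direct computation — using $(rw-c)\bar\psi'=r\beta\bar\psi$ and $\varepsilon(\bar\psi')^2+\bar\psi''\propto\varepsilon\beta y^{\beta}+(\beta-1)$, $y=\tfrac{c-rw}{c-rb}$ — shows $F(\cdot,\bar\psi,\bar\psi',\bar\psi'')\ge0$ on $(b,w_s)$: trivially on the set where $\varepsilon\beta y^{\beta}+\beta-1\le0$ (there $F=+\infty$), and on the complement because $r\beta-\lambda=R/\varepsilon>0$ reduces the inequality to $\beta(1-y^{\beta})\ge(\beta-1)/\varepsilon$, true since $\beta<1$; moreover $F>0$ near $w=b$. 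Thus $\bar\psi$ is a viscosity supersolution of $F=0$ with $\bar\psi(b)=1=\psi(b)$, $\bar\psi(w_s)=0=\psi(w_s)$ and $\bar\psi\ge0$, so the comparison principle (Proposition \ref{comparisonP1}) yields $\psi\le\bar\psi$ on $[b,w_s]$, whence $\psi'(b)\le\bar\psi'(b)=-\beta r/(c-rb)$ and $\ell(b)\ge r\beta-\lambda=R/\varepsilon$. The inequality is strict: if $\psi'(b)=\bar\psi'(b)$ then $\psi''(b)\le\bar\psi''(b)$ (as $\psi\le\bar\psi$ near $b$ with matching value and slope), and since $F(\cdot,u,u',u'')$ is strictly decreasing in $u''$ on $\{\varepsilon(u')^2+u''>0\}$ — which contains both relevant jets at $b$, as $\psi'(b)\ne0$ — we would get $0=F(b,\psi(b),\psi'(b),\psi''(b))\ge F(b,\bar\psi(b),\bar\psi'(b),\bar\psi''(b))>0$, a contradiction. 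Hence $\ell(b)>R/\varepsilon$, $w_0$ exists and is unique, and $\psi$ is strictly concave on $[b,w_0)$ and strictly convex on $(w_0,w_s]$.

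The hardest part will be the case $\varepsilon>R/(r-\lambda)$: identifying the correct comparison function — the exponent $\beta$ is exactly the one making $\bar\psi'(b)$ equal to the slope $-(\lambda+R/\varepsilon)/(c-rb)$ that $\ell(b)=R/\varepsilon$ would force — and verifying its supersolution property on the whole interval, including the degeneracy region near $w_s$; the bound $0<\beta<1$, i.e. $\varepsilon>R/(r-\lambda)$, is precisely what makes this go through. The strictness upgrade, via monotonicity of $F$ in its second-derivative slot together with the strict supersolution property near $b$, is the other point requiring care; everything else is the elementary sign bookkeeping for $\ell$.
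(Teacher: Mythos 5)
Your proposal is correct, and for parts (i), the ``at most one inflection'' and $w_0$-characterization in (ii), and the case $0\le\varepsilon\le R/(rd-\lambda)$, it follows essentially the same sign-bookkeeping for $\ell(w):=(rw-c)\psi'-\lambda\psi$ that the paper uses (the paper calls it $f$), via the identity $\psi''=(R/\ell-\varepsilon)(\psi')^2$ and the transversality $\ell'=(r-\lambda)\psi'$ at inflection points.

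Where you genuinely diverge is the case $\varepsilon>R/(r-\lambda)$. The paper's argument there is a short dichotomy on $\psi'(b)$ relative to the secant slope $r/(rb-c)=1/(b-w_s)$: if $\psi'(b)$ exceeds the secant slope, convexity on $[b,w_s]$ is outright impossible because the tangent at $(b,1)$ would then lie above $(w_s,0)$; if $\psi'(b)$ is at most the secant slope, then $\ell(b)=(rb-c)\psi'(b)-\lambda\ge r-\lambda>R/\varepsilon$ directly. You instead construct the explicit power function $\bar\psi(w)=\bigl(\tfrac{c-rw}{c-rb}\bigr)^\beta$ with $\beta=\tfrac{\lambda}{r}+\tfrac{R}{r\varepsilon}\in(0,1)$, verify the viscosity supersolution property (trivially where $\varepsilon(\bar\psi')^2+\bar\psi''\le0$, and by the computation $F\propto \varepsilon\beta(1-y^\beta)+(1-\beta)>0$ elsewhere), invoke Proposition \ref{comparisonP1} to get $\psi\le\bar\psi$, and then upgrade $\psi'(b)\le\bar\psi'(b)$ to a strict inequality through the strict monotonicity of $F$ in the $u''$-slot. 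I checked the supersolution computation and the strictness step (in the borderline case $\psi'(b)=\bar\psi'(b)$ the matching slopes plus $\psi''(b)\le\bar\psi''(b)$ force both jets into the region $\varepsilon(u')^2+u''>0$, so the monotonicity comparison is legitimate), and they are sound. The paper's route is shorter and more elementary; your route is heavier but yields the pointwise bound $\psi\le\bar\psi$ as a by-product, and the exponent $\beta$ is exactly the one making $\bar\psi'(b)$ the threshold slope, which is a conceptually pleasant way to locate the critical value of $\varepsilon$. One small suggestion: after $\psi\le\bar\psi$ on $[b,w_s]$ with equality at $b$, it is cleaner to note that $\ell(b)\ge R/\varepsilon$ already forces concavity at $b$ in the non-strict sense, and then observe that $\ell(b)=R/\varepsilon$ would give $\psi''(b)=0$ and hence $\ell'(b)=(r-\lambda)\psi'(b)<0$, pushing $\ell$ strictly below $R/\varepsilon$ immediately to the right, so that $\psi$ would be convex on all of $(b,w_s)$ while $\psi''(b)=0$; this contradicts $\psi''>0$ near $w_s$ only if one also rules out a flat start, which your $F$-monotonicity argument handles but the paper's dichotomy sidesteps entirely.
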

\begin{proof}
(i) When $\varepsilon=0$, strict convexity holds regardless of the sign of $r-\lambda$. Assume $\varepsilon>0$. Let $f(w):=(rw-c)\psi'-\lambda \psi$. When proving Proposition \ref{vhatC2}, we showed that $(rw-c)\hat{v}'-\lambda\hat{v}\ln\hat{v}>0$ in $[b,w_s)$. In terms of $\psi$ which equals $\hat{u}$ on $[b,w_s]$, we have $\varepsilon e^{\varepsilon \psi}[(rw-c)\psi'-\lambda\psi]>0$, which implies $f>0$ on $[b,w_s)$.
Recall that $\psi$ satisfies
\[R\frac{(\psi')^2}{\varepsilon (\psi')^2+\psi''}=f.\]
Moving $\psi''$ to one side and everything else to the other side, we obtain
\begin{equation}\label{psiconvex_eq1}
\psi''=\left(\frac{R}{f}-\varepsilon\right)(\psi')^2.
\end{equation}
We see that the sign of $\psi''$ depends on the relative size of $f$ to $R/\varepsilon$. Since $\psi\in C^2[b,w_s)$, we can differentiate $f$ and get
\begin{equation}\label{psiconvex_eq2}
f'=(r-\lambda)\psi'+(rw-c)\psi''\geq (rw-c)\psi'',
\end{equation}
where the inequality follows from $\psi'<0$ and the assumption that $r\leq \lambda$. Since $f(w_s)=0$, $f$ attains maximum either at an interior point or at $w=b$. In both cases, we have $f'(w_m)\leq 0$ where $w_m\in[b,w_s)$ is the point where maximum is attained. It follows from \eqref{psiconvex_eq2} that $\psi''(w_m)\geq 0$ and then from \eqref{psiconvex_eq1} that $f(w_m)\leq R/\varepsilon$. Since $w_m$ is a maximum point, $f(w)\leq R/\varepsilon$ for all $w\in[b,w_s]$. This in turn implies by \eqref{psiconvex_eq1} that $\psi''(w)\geq 0$ for all $w\in(b,w_s)$. Since $\psi$ is continuous, interior convexity can be extended to the boundary. If $r<\lambda$, then the inequality in \eqref{psiconvex_eq2} becomes strict and we have $\psi''(w_m)>0$ at the point $w_m$ of maximality of $f$. Subsequent inequalities all become strict and we obtain strict convexity of $\psi$.

(ii) First of all, equation \eqref{psiconvex_eq1} implies in any circumstances, regardless of the sign of $r-\lambda$ and the value of $\varepsilon$, $\psi$ will be strictly convex in a neighborhood of $w_s$. This is because $f(w_s)=0$ so that $R/f(w)-\varepsilon>0$ for $w$ sufficiently close to $w_s$. Let $r>\lambda$. Then \eqref{psiconvex_eq2} becomes
\begin{equation}\label{psiconvex_eq3}
f'=(r-\lambda)\psi'+(rw-c)\psi''< (rw-c)\psi'', \quad w\in[b,w_s).
\end{equation}
If $\psi$ changes concavity at $w_0$, then $\psi''(w_0)=0$ and the above inequality implies $f'(w_0)<0$. So $f$ is strictly decreasing whenever $\psi$ changes concavity.\footnote{$\psi$ cannot be locally linear because otherwise on one hand, \eqref{psiconvex_eq3} implies $f'<0$ and $f$ is locally strictly decreasing; on the other hand, \eqref{psiconvex_eq1} implies $f$ is locally constant.} Looking at \eqref{psiconvex_eq1}, we deduce that $\psi$ can only change from concave to convex if concavity changes at all. Since we have already argued $\psi$ is strictly convex in a neighborhood of $w_s$, we conclude that if $\psi$ is not convex everywhere, then it changes concavity only once; it is strictly concave up to the (unique) point $w_0$ where $f(w_0)=R/\varepsilon$ and is strictly convex afterwards. We also note that since $f$ can only touch or cross the horizontal line at $R/\varepsilon$ in a decreasing fashion, $f(w)>R/\varepsilon$ for $w\in[b,w_0)$ and $f(w)<R/\varepsilon$ for $w\in(w_0,w_s]$.

Next, we identify some cases when $\psi$ changes or does not change concavity. In view of the way $f$ intersect the horizontal line at $R/\varepsilon$, it suffices to check whether $f(b)>R/\varepsilon$. We have
\[f(b)=(rb-c)\psi'(b)-\lambda.\]

If $0\leq \varepsilon\leq \frac{R}{rd-\lambda}$, then $f(b)\leq (rb-c)\psi'_0(b)-\lambda=rd-\lambda\leq R/\varepsilon$ and there will be no concavity change for $\psi$. 

If $\varepsilon>\frac{R}{r-\lambda}$, then we consider two cases. If $\psi'(b)>\frac{1}{b-w_s}=\frac{r}{rb-c}$, then $\psi$ cannot be convex everywhere. If it is convex everywhere, it will stay above its tangent line passing through the point $(b,1)$. But the point $(w_s,0)$ lies below this tangent line, which means the right boundary condition is not satisfied. If $\psi'(b)\leq \frac{r}{rb-c}$, then $f(b)\geq r-\lambda>R/\varepsilon$. In both cases, $\psi$ will change concavity.
\end{proof}
\begin{remark}\label{inflection_pt}
Based on the feedback form \eqref{thetastar}, $\theta^\ast$ is bounded below by $-\frac{\mu-r}{\sigma}$ whenever $\psi$ is convex. If $\psi$ is not convex, then its infection point is the unique point where 
$\theta^\ast=-\frac{\mu-r}{\sigma}$. In other words, $\psi$ changes concavity when the  distorted Sharpe ratio $\frac{\mu-r}{\sigma}+\theta^\ast$ is zero. Moreover, since $\psi$ changes from concave to convex, 
the distorted Sharpe ratio is negative to the left of this inflection point and positive to the right of this inflection point.
\end{remark}

\begin{prop}\label{never_reach_ws}
Let $b<w<w_s$ and $W^\ast$ be the optimally controlled wealth starting at $w$. Let $\tau^\ast_s:=\inf\{t\geq 0: W^\ast_t\geq w_s\}$ and $\tau^\ast_b:=\inf\{t\geq 0: W^\ast_t\leq b\}$. Then $\mathbb{P}(\tau^\ast_s<\tau^\ast_b)=0$.
\end{prop}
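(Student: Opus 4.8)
The plan is to reduce the claim to a non-explosion statement for a logarithmic transform of the optimally controlled wealth. I would first observe that it suffices to argue under the reference measure $\mathbb{P}$, under which
\[dW^\ast_t=\left[rW^\ast_t+(\mu-r)\pi^\ast(W^\ast_t)-c\right]dt+\sigma\pi^\ast(W^\ast_t)\,dB_t,\qquad W^\ast_0=w;\]
indeed, the optimally distorted measure $\mathbb{Q}^\ast$ (built from the bounded distortion $\theta^\ast$) is equivalent to $\mathbb{P}$ on every $\mathcal{G}_t$, so it assigns the same probability to $\{\tau^\ast_s<\tau^\ast_b\}$, which is a countable union of events measurable with respect to the $\mathcal{G}_n$'s. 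By path continuity and $W^\ast_0=w<w_s$, on the event $\{\tau^\ast_s<\tau^\ast_b\}$ one has $\tau^\ast_s<\infty$ and $W^\ast_t\uparrow w_s$ as $t\uparrow\tau^\ast_s$, and the goal is to show this forces a contradiction.

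The key step is to track $X_t:=-\ln(w_s-W^\ast_t)$, which is finite for $t<\rho:=\tau^\ast_s\wedge\tau^\ast_b$ (there $W^\ast_t\in(b,w_s)$) and which tends to $+\infty$ precisely when $W^\ast_t\to w_s$. Using $w_s=c/r$, so that $rw-c=-r(w_s-w)$, Itô's formula gives on $[0,\rho)$
\[dX_t=\left[-r+(\mu-r)\,\frac{\pi^\ast(W^\ast_t)}{w_s-W^\ast_t}+\frac{\sigma^2}{2}\left(\frac{\pi^\ast(W^\ast_t)}{w_s-W^\ast_t}\right)^2\right]dt+\sigma\,\frac{\pi^\ast(W^\ast_t)}{w_s-W^\ast_t}\,dB_t.\]
Here I would invoke Lemma~\ref{pi_bdd}: since $0<\pi^\ast(w)<\frac{2(c-rw)}{\mu-r}=\frac{2r}{\mu-r}(w_s-w)$ on $[b,w_s)$, the ratio $\pi^\ast(w)/(w_s-w)$ is bounded on $[b,w_s)$, hence so are the drift and diffusion coefficients of $X$, say by a constant $K$. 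Stopping at $\rho$, the process $X$ is then (up to $\rho$) a continuous semimartingale whose bounded-variation part has total variation $\le K(t\wedge\rho)$ and whose local-martingale part has quadratic variation $\le K^2(t\wedge\rho)$; consequently it has an a.s.\ finite limit as $t\uparrow\rho$ on $\{\rho<\infty\}$. This contradicts $X_t\uparrow+\infty$ along $\{\tau^\ast_s<\tau^\ast_b\}\subset\{\rho=\tau^\ast_s<\infty\}$, so $\mathbb{P}(\tau^\ast_s<\tau^\ast_b)=0$.

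I expect essentially all of the substance to be already contained in Lemma~\ref{pi_bdd}: the bound $\pi^\ast(w)\le 2(c-rw)/(\mu-r)$ near the safe level is exactly what makes the coefficients of the log-transform bounded, and everything after that is a routine non-explosion estimate (equivalently, near $w_s$ the gap $w_s-W^\ast$ is comparable to a geometric Brownian motion, which is strictly positive). The only mildly delicate points are the reduction to $\mathbb{P}$ and the clean justification that a diffusion with bounded coefficients cannot carry $X$ to $+\infty$ in finite time.
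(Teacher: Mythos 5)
Your proof is correct, and it takes a genuinely different route from the paper's. The paper extends the feedback control $\Pi$ to all of $\mathbb{R}$ and applies Feller's test for explosions: it computes the scale function, the associated integral $v(\cdot)$, and verifies $v(-\infty)=v(w_s)=\infty$ to conclude that the extended diffusion never exits $(-\infty,w_s)$ in finite time. You instead work locally on the stochastic interval $[0,\rho)$ with $\rho=\tau^\ast_s\wedge\tau^\ast_b$, apply the logarithmic change of variable $X_t=-\ln(w_s-W^\ast_t)$, and use the bound $\pi^\ast(w)<\frac{2r}{\mu-r}(w_s-w)$ from Lemma~\ref{pi_bdd} to show that both the drift and diffusion coefficients of $X$ are uniformly bounded on $(b,w_s)$; a continuous semimartingale whose finite-variation part has bounded variation on $[0,\rho]$ and whose local-martingale part has bounded quadratic variation converges a.s.\ on $\{\rho<\infty\}$, contradicting $X_t\to+\infty$. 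Both arguments hinge on the same linear bound for $\pi^\ast$ near the safe level, so they are morally close, but yours avoids the Feller machinery entirely and does not require the artificial extension of $\Pi$ below $b$. One small remark: the reduction to $\mathbb{P}$ at the start is unnecessary --- the proposition is stated under $\mathbb{P}$ and $W^\ast$ already solves the relevant SDE under $\mathbb{P}$ (the $\theta^\ast$ distortion only changes the measure, not the process path) --- though it is harmless. You might also want to make explicit the localizing sequence (e.g.\ $\sigma_n=\inf\{t:X_t\ge n\}\wedge n$) and the lower bound $X_t\ge-\ln(w_s-b)$ when writing the final version, to make the ``$X$ has a finite limit'' step fully rigorous, but the idea is sound.
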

\begin{proof}
Since we are only interested in whether the safe level can be reached before ruin, we may extend the domain of $\Pi$ to $\mathbb{R}$ and set $\Pi(w):=\frac{c-rw}{\mu-r}$ for $w\leq b$. Let $\widetilde{W}$ be the solution to the SDE:
\[dW_t=[rW_t+(\mu-r)\Pi(W_t)-c]dt+\sigma \Pi(W_t)dB_t, \quad W_0=w.\]
$\widetilde{W}$ equals $W^\ast$ up to ruin time. It suffices to show $\widetilde{W}$ does not exit the interval $(-\infty, w_s)$ in finite time, and we use Feller's test for explosions (see section 5.5.C of \cite{KS_bible}). By Lemma \ref{pi_bdd}, non-degeneracy and local integrability hold on this interval. Let $\mathfrak{s}(w):=\sigma\Pi(w)$ and $\mathfrak{b}(w):=rw+(\mu-r)\Pi(w)-c$. Fix $w_0\in (-\infty, w_s)$. Let
\[p(w):=\int_{w_0}^w \exp\left(-2\int_{w_0}^y\frac{\mathfrak{b}(z)}{\mathfrak{s}^2(z)}dz\right)dy\]
be the scale function, and
\[v(w):=\int_{w_0}^w p'(y)\int_{w_0}^y\frac{2dz}{p'(z)\mathfrak{s}^2(z)}dy=\int_{w_0}^w \int_{w_0}^y\frac{2}{\mathfrak{s}^2(z)}\exp\left(-2\int_z^y\frac{\mathfrak{b}(x)}{\mathfrak{s}^2(x)}dx\right)dzdy.\]
We want to show $v(-\infty)=v(w_s)=\infty$. $v(-\infty)=\infty$ is easy by the way we extend $\Pi$. Let $a\leq w_0\wedge b$. Since $\mathfrak{b}(x)=0$ for $x\leq a$, we have 
\begin{align*}
v(-\infty)&=\int^{w_0}_{-\infty} \int^{w_0}_y\frac{2}{\mathfrak{s}^2(z)}\exp\left(2\int^z_y\frac{\mathfrak{b}(x)}{\mathfrak{s}^2(x)}dx\right)dzdy\geq \int^{a}_{-\infty} \int^{a}_y\frac{2}{\mathfrak{s}^2(z)}dzdy\\
&=\int^{a}_{-\infty} \int^{a}_y\frac{4R}{(c-rz)^2}dzdy=\infty.
\end{align*}
To show $v(w_s)=\infty$, we use Lemma \ref{pi_bdd} or Proposition \ref{pihat_eps} to obtain $\Pi(w)\leq K_1(c-rw)$, $w\in(b,w_s)$ for some positive constant $K_1$. It follows that $|\mathfrak{b}(w)|\leq [1+K_1(\mu-r)](c-rw)$, $w\in(b,w_s)$. Also observe that if $\mathfrak{b}(w)>0$, then $\Pi(w)>\frac{c-rw}{\mu-r}$. So we have
\[\frac{\mathfrak{b}(w)}{\mathfrak{s}^2(w)}\leq 1_{\{\mathfrak{b}(w)>0\}}\frac{\mathfrak{b}(w)}{\mathfrak{s}^2(w)}\leq1_{\{\mathfrak{b}(w)>0\}}\frac{2R[1+K_1(\mu-r)]}{c-rw}\leq \frac{K_2}{c-rw}, w\in(b,w_s),\]
where $K_2:=2R[1+(\mu-r)K_1]>0$. Let $(b\vee w_0)\leq a'<w_s$.
\begin{align*}
v(w_s)&=\int_{w_0}^{w_s} \int_{w_0}^y\frac{2}{\mathfrak{s}^2(z)}\exp\left(-2\int_z^y\frac{\mathfrak{b}(x)}{\mathfrak{s}^2(x)}dx\right)dzdy\\
&\geq \int_{a'}^{w_s} \int_{a'}^y\frac{2}{\sigma^2 K_1^2(c-rz)^2}\exp\left(-2\int_z^y\frac{K_2}{c-rx}dx\right)dzdy\\
&=\int_{a'}^{w_s} \int_{a'}^y\frac{2}{\sigma^2 K_1^2(c-rz)^2}\left(\frac{c-ry}{c-rz}\right)^{\frac{2K_2}{r}}dzdy\\
&=\frac{2}{\sigma^2 K^2_1(r+2K_2)}\int_{a'}^{w_s} (c-ry)^{\frac{2K_2}{r}}\left[(c-ry)^{-1-\frac{2K_2}{r}}-(c-ra')^{-1-\frac{2K_2}{r}}\right]dy\\
&=\frac{2}{\sigma^2 K^2_1(r+2K_2)}\left[\int_{a'}^{w_s} \frac{1}{c-ry}dy-\int_{a'}^{w_s} \frac{1}{c-ra'}\left(\frac{c-ry}{c-ra'}\right)^{\frac{2K_2}{r}}dy\right].
\end{align*}
The second integral is finite while the first integral diverges to $\infty$. So we obtain $v(w_s)=\infty$. The rest  is by Feller's test for explosions.
\end{proof}

\section{Numerical Analysis and Asymptotic Expansion}\label{sec:numerics}

\subsection{Numerical examples}

We solve the boundary value problem \eqref{BVP} numerically using finite difference method. The model parameters used are $c=1$, $b=1$, $\mu=0.1$, $\sigma=0.15$, $\lambda=0.04$ and $\varepsilon=0,1,5,10,50$. We choose a hazard rate of $0.04$, i.e. an expected future lifetime of 25 years, because the investment problem we considered is more relevant to retirees. To demonstrate that the concavity of the value function is closely related to how interest rate compares with hazard rate, we work with two values of interest rate: $r=0.02<\lambda$ and $r=0.06>\lambda$. We plot the robust ruin probability, the optimal investment and the optimal distorted Sharpe ratio as functions of wealth under different levels of ambiguity aversion.  

\begin{figure}[h]
\centering
\includegraphics[height=6cm]{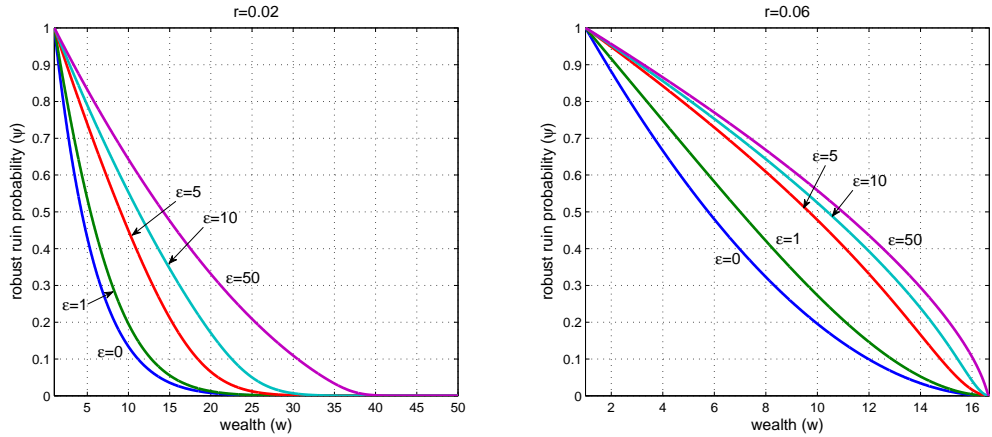}
\caption{Robust ruin probabilities.}
\label{psi}
\end{figure}

\begin{figure}[h]
\centering
\includegraphics[height=4cm]{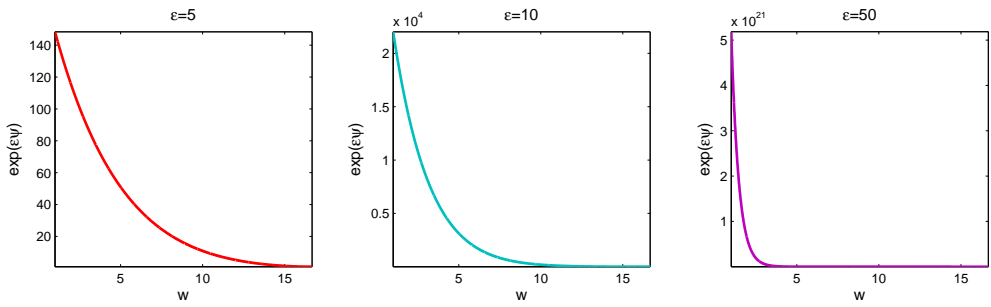}
\caption{Cole-Hopf transform of three non-convex curves in Figure \ref{psi}.}
\label{fig_convex}
\end{figure}

\begin{figure}[h]
\centering
\includegraphics[height=6cm]{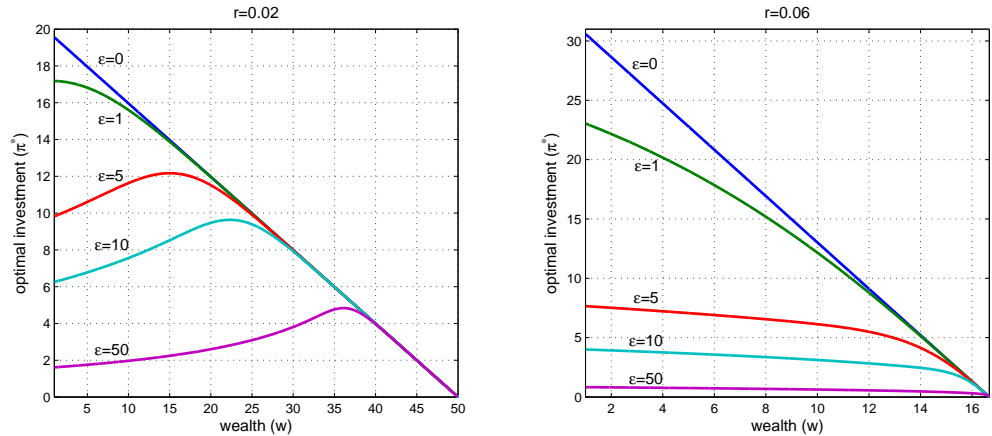}
\caption{Optimal investments.}
\label{pi}
\end{figure}

\begin{figure}[h]
\centering
\includegraphics[height=6cm]{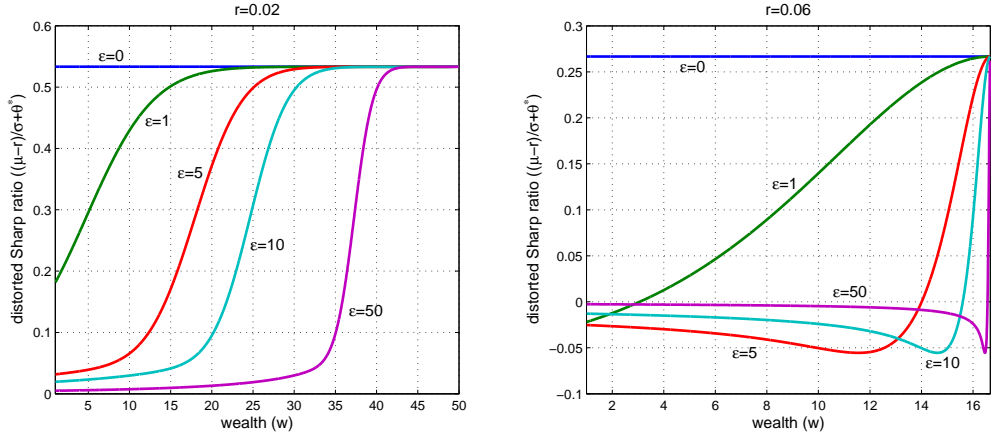}
\caption{Optimally distorted Sharpe ratios.}
\label{SR}
\end{figure}

\begin{figure}[h]
\centering
\includegraphics[height=6cm]{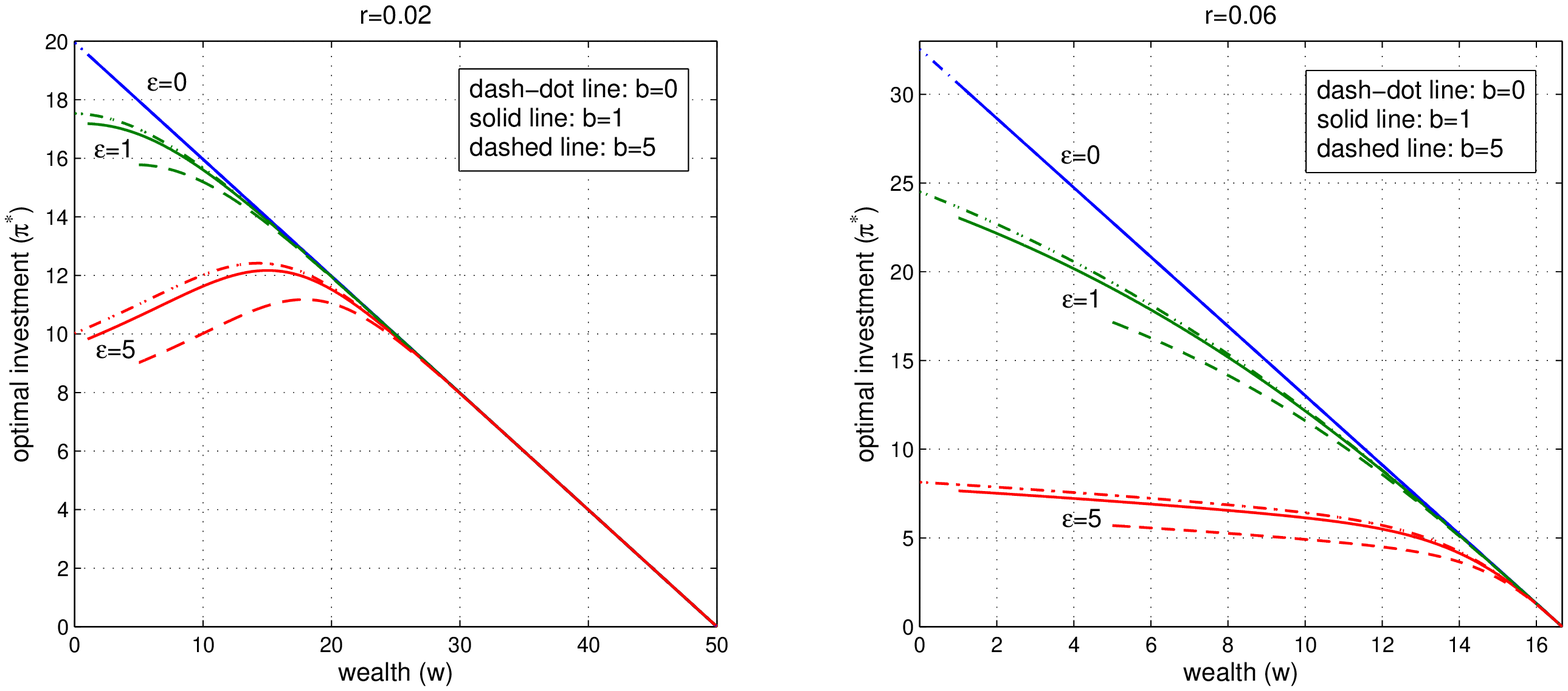}
\caption{Optimal investments with different ruin levels.}
\label{fig_b_dependence}
\end{figure}

From Figure \ref{psi}, we see that the robust value function is increasing in $\varepsilon$. When the interest rate is smaller than the hazard rate, all value functions are strictly convex. When the interest rate is larger than the hazard rate, concavity depends on the level of ambiguity aversion: the value function is convex when $\varepsilon$ is small, and changes from concave to convex when $\varepsilon$ is large. The larger the $\varepsilon$, the closer the inflection point is to the safe level.
With this set of parameters, a sufficient condition for $\psi$ to be convex, as implied by Proposition \ref{psiconvex}, is $0\leq\varepsilon\leq 0.4765$. A sufficient condition for $\psi$ to change concavity is $\varepsilon>1.7778$. $\varepsilon=5$, 10, 50 all satisfy this condition and exhibit concavity change. By Remark \ref{inflection_pt}, the inflection points of $\psi$ corresponds to the points where the optimal distorted Sharpe ratio is zero. Despite that $\psi$ may be concave, its Cole-Hopf transform $e^{\varepsilon \psi}$ is always convex, as demonstrated by Figure \ref{fig_convex}.

Figure \ref{pi} shows the optimal investment level is decreasing in $\varepsilon$, which agrees with Proposition \ref{pihat_eps}. This means the more ambiguity-averse the agent is, the less she is willing to invest in the risky asset. Different from the non-robust case, the optimal investment, although goes to zero as wealth approaches the safe level, is not necessarily a decreasing function of wealth. When interest rate is large (compared with hazard rate), $\pi^\ast$ is decreasing and also concave in $w$. But when interest rate is small, there is an interior point where $\pi^\ast$ achieves maximum. Moreover, as $\varepsilon$ increases, the maximum point moves to the right. In any case, adding ambiguity aversion reduces the amount of borrowing when the wealth of the investor is small, making the model more realistic than the non-robust model (without borrowing constraint). Another interesting observation is that the optimal $\pi^\ast$ of all levels of ambiguity aversion share the same tangent line at the safe level with their non-robust counterparts, confirming equation \eqref{tangent_line_at_ws} of Proposition \ref{pi_Lip}.

Figure \ref{SR} shows that when the interest rate is small (compared with hazard rate), the optimally distorted Sharpe ratio $\frac{\mu-r}{\sigma}+\theta^\ast$ is strictly positive, decreasing in $\varepsilon$ and increasing in wealth. But when interest rate is large, it can be negative, and both monotonicities are lost. In both cases, the pictures suggest that the optimally distorted Sharpe ratio is converging to zero pointwise as $\varepsilon\rightarrow \infty$. Moreover, observe from Figure~\ref{pi} that the optimal investment converges to zero pointwise as $\varepsilon\rightarrow \infty$, which is the investment behavior corresponding to $\psi_{\infty}$. This suggests that for $\varepsilon$ very large, the stock is losing its attractiveness as it becomes less favorable compared to the money market account.

In the non-robust case, the optimal investment strategy is independent of the ruin level $b$ in the sense that if $b_1<b_2$, then $\pi^\ast(w;b_1)$ coincides with $\pi^\ast(w;b_2)$ on $[b_2,w_s]$. This holds not only for constant consumption rate, but also for any Lipschitz continuous consumption rate (see \cite[Corollary 2.3]{BayraktarYoung07a}). However, when ambiguity aversion is present, the ruin level has a global impact on the investment decision unless hazard rate is zero. When $\varepsilon\neq 0$, Figure \ref{fig_b_dependence} suggests $\pi^\ast$ is decreasing in $b$. In other words, the individual will invest less if she is more likely to feel ruined.

\subsection{Asymptotic expansion for small $\varepsilon$}

In general, \eqref{BVP} does not have an explicit solution, but it turns out that for small $\varepsilon$, there are explicit formulas for the leading term and the first order correction. Rewrite \eqref{BVP:a} as
\begin{equation}\label{BVPa2}
\left((rw-c)\psi'-\lambda \psi\right)\left(\varepsilon(\psi')^2+\psi''\right)=R(\psi')^2.
\end{equation}
Let
\begin{equation*}
f_0(w)+f_1(w)\varepsilon+f_2(w)\varepsilon^2+\cdots
\end{equation*}
be an asymptotic expansion of $\psi(w)$ as $\varepsilon\rightarrow 0$. Substituting the expansion into \eqref{BVPa2} and collecting zero-th order terms in $\varepsilon$, we get
\begin{equation}\label{eq_f0}
\left((rw-c)f_0'-\lambda f_0\right)f_0''=R(f_0')^2
\end{equation}
which is precisely the differential equation satisfied by the non-robust value function. We impose the boundary conditions $f_0(b)=1$ and $f_0(w_s)=0$. Then
\begin{equation*}
f_0(w)=\psi_0(w)=\left(\frac{c-rw}{c-rb}\right)^d.
\end{equation*}
Collecting first order terms in $\varepsilon$, we get
\begin{equation}\label{eq_f1}
[(rw-c)f'_1-\lambda f_1]f''_0+[(rw-c)f'_0-\lambda f_0][(f'_0)^2+f''_1]=2Rf'_0f'_1.
\end{equation}
Using the formula for $f_0$, after some computation, we arrive at a linear second order ODE for $f_1$:
\begin{equation}\label{f1}
f''_1+A(w)f_1'+B(w)f_1+C(w)=0
\end{equation}
where
\begin{align*}
A(w)&:=\frac{r(d-1)(2R-rd+r)}{R}\frac{1}{c-rw},\\
B(w)&:=\frac{-\lambda r^2 (d-1)^2}{R}\frac{1}{(c-rw)^2},\\
C(w)&:=\frac{r^2 d^2}{(c-rb)^{2d}}(c-rw)^{2d-2}.
\end{align*}
We require $f_1$ to satisfy the homogeneous boundary conditions $f_1(b)=f_1(w_s)=0$. Let $x=c-rw$ and $g(x)=f_1(w)$. Equation \eqref{f1} can be rewritten as
\begin{equation}\label{ODEg}
x^2g''-\frac{(d-1)(2R-rd+r)}{R}xg'-\frac{\lambda(d-1)^2}{R}g+\frac{d^2}{(c-rb)^{2d}}x^{2d}=0
\end{equation}
with boundary conditions $g(0)=g(c-rb)=0$. This is a non-homogeneous Cauchy-Euler equation. The corresponding homogeneous equation has general solution:
\[g_h(x)=C_1x^{k_1}+C_2x^{k_2}\]
where $k_1>0>k_2$ are the roots of
\[k^2-\left(2d-1-\frac{r(d-1)^2}{R}\right)k-\frac{\lambda(d-1)^2}{R}=0.\]
It turns out that $k_1=d$. For a particular solution, we guess the form $g_p(x)=C_px^{2d}$. Substituting $g_p$ into \eqref{ODEg}, we find
\[C_p=\frac{-Rd^2}{(c-rb)^{2d}\left[(d-1)^2(2dr-\lambda)+2Rd\right]}\]
The general solution to \eqref{ODEg} is $g=g_h+g_p$. Since $g(0)=0$, we must have $C_2=0$, otherwise the solution would explode at $x=0$. The other boundary condition $g(c-rb)=0$ yields
\[C_1=-C_p(c-rb)^{d}.\]
So we have obtained
\begin{equation*}
f_1(w)=g(c-rw)=\frac{Rd^2}{(d-1)^2(2dr-\lambda)+2Rd}\left[\left(\frac{c-rw}{c-rb}\right)^{d}-\left(\frac{c-rw}{c-rb}\right)^{2d}\right].
\end{equation*}

\begin{prop}
\[\psi(w)=\psi_0(w)+\frac{Rd^2\left(\psi_0(w)-\psi_0^{2}(w)\right)}{(d-1)^2(2dr-\lambda)+2Rd}\,\varepsilon+O(\varepsilon^2)\]
as $\varepsilon\downarrow 0$ uniformly in $w$, where the constants $R,d$ are defined in \eqref{Rd}.
\end{prop}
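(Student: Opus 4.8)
The plan is to upgrade the formal two‑term expansion already computed into a rigorous estimate by a comparison argument. Write $f_1(w):=\frac{Rd^2}{(d-1)^2(2dr-\lambda)+2Rd}\bigl(\psi_0(w)-\psi_0^2(w)\bigr)$, which is the function obtained from \eqref{eq_f1}; recall $\psi_0$ and $f_1$ are smooth on $(b,w_s)$, continuous on $[b,w_s]$, nonnegative, and vanish at $w_s$ while $f_1$ also vanishes at $b$. For a constant $K$ to be fixed I would set $\Phi_\varepsilon^{\pm}:=\psi_0+\varepsilon f_1\pm K\varepsilon^2$ and show that, for all sufficiently small $\varepsilon>0$, $\Phi_\varepsilon^{+}$ is a viscosity supersolution and $\Phi_\varepsilon^{-}$ a viscosity subsolution of $F=0$ on $(b,w_s)$. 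Since $\Phi_\varepsilon^{+}(b)=1+K\varepsilon^2\ge\psi(b)$, $\Phi_\varepsilon^{+}(w_s)=K\varepsilon^2\ge 0=\psi(w_s)$, $\Phi_\varepsilon^{-}(b)=1-K\varepsilon^2\le\psi(b)$, $\Phi_\varepsilon^{-}(w_s)=-K\varepsilon^2\le 0$, and $\psi=\hat u$ is a bounded nonnegative classical (hence viscosity) solution, applying the comparison principle (Proposition~\ref{comparisonP1}) twice --- once to the pair $(\psi,\Phi_\varepsilon^{+})$, legitimate because $\Phi_\varepsilon^{+}\ge 0$, and once to $(\Phi_\varepsilon^{-},\psi)$, legitimate because $\psi\ge 0$ --- yields $\psi_0+\varepsilon f_1-K\varepsilon^2\le\psi\le\psi_0+\varepsilon f_1+K\varepsilon^2$ on $[b,w_s]$. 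As $\psi\equiv\psi_0$ and $f_1\equiv 0$ off $(b,w_s)$, this is exactly the claimed expansion, uniform in $w$. The value $\lambda=0$ is outside this scheme --- a constant shift no longer perturbs the equation at order $\varepsilon^2$ --- but there $\psi=\mathfrak p$ is explicit and the expansion follows by Taylor expanding \eqref{psi0}; so one may assume $\lambda>0$.

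For the sub/supersolution check, introduce $\Lambda[u]:=\bigl((rw-c)u'-\lambda u\bigr)\bigl(\varepsilon(u')^2+u''\bigr)-R(u')^2$, so that \eqref{BVP:a} reads $\Lambda[\psi]=0$ and, on the region where $\varepsilon(u')^2+u''>0$, one has $F\le 0\iff\Lambda[u]\ge 0$ and $F\ge 0\iff\Lambda[u]\le 0$. First I would verify that for $\varepsilon$ small enough $(\Phi_\varepsilon^{\pm})''=\psi_0''+\varepsilon f_1''>0$ throughout $(b,w_s)$ (so $\Lambda$ captures the equation there); this uses $\psi_0''>0$ on $(b,w_s)$ together with the power‑law behaviour $\psi_0''\sim\mathrm{const}\cdot(c-rw)^{d-2}$ and $f_1''\sim\mathrm{const}\cdot(c-rw)^{d-2}$ as $w\to w_s-$, which also makes the threshold $\varepsilon_0$ uniform in $w$. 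Substituting $\Phi_\varepsilon^{\pm}$ into $\Lambda$ and expanding in $\varepsilon$, the order‑$\varepsilon^0$ term vanishes by \eqref{eq_f0}, the order‑$\varepsilon^1$ term vanishes by \eqref{eq_f1} (the constant shift $\pm K\varepsilon^2$ enters only through the undifferentiated term $\lambda u$), leaving
\[\Lambda[\Phi_\varepsilon^{\pm}](w)=\varepsilon^2\bigl(h(w)\mp\lambda K\psi_0''(w)\bigr)+\varepsilon^3 R_\varepsilon(w),\]
where $h$ is an explicit function built from $\psi_0,f_1$ and their derivatives and $R_\varepsilon$ is a polynomial of bounded degree in $\varepsilon$ with coefficients of the same type.

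The main obstacle I anticipate is the uniform‑in‑$w$ sign control of this remainder near the safe level, where $\psi_0''$ tends to $0$ or to $\infty$ according as $d>2$ or $d<2$. The estimates needed are $|h(w)|\le M\psi_0''(w)$ and $|R_\varepsilon(w)|\le C\psi_0''(w)$ on $(b,w_s)$, uniformly for $\varepsilon\le\varepsilon_0$; these follow by power‑counting in $x=c-rw$: near $w_s$ one has $\psi_0\sim x^d$, $\psi_0'\sim x^{d-1}$, $\psi_0''\sim x^{d-2}$, $f_1\sim x^d$, $f_1'\sim x^{d-1}$, $f_1''\sim x^{d-2}$, and each product making up $h$ or a coefficient of $R_\varepsilon$ is then of order $x^{d-2}$ or smaller; on compact subintervals and near $w=b$, $\psi_0''$ is bounded below, so the ratios stay bounded there too. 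Granting these, fix $K>M/\lambda$ and shrink $\varepsilon_0$ so that $C\varepsilon_0\le\tfrac12(\lambda K-M)$; then $\Lambda[\Phi_\varepsilon^{+}]\le-\tfrac12\varepsilon^2\psi_0''(\lambda K-M)\le 0$ and $\Lambda[\Phi_\varepsilon^{-}]\ge\tfrac12\varepsilon^2\psi_0''(\lambda K-M)\ge 0$ on $(b,w_s)$, which are precisely the super/subsolution properties. The comparison step then closes the argument, and taking $K=M/\lambda+1$ gives the $O(\varepsilon^2)$ constant. I expect the only genuinely delicate part of a full write‑up to be the bookkeeping of these boundary‑layer estimates near $w_s$; everything else is routine.
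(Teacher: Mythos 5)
Your proposal is correct and takes essentially the same approach as the paper's sketch: build $\tilde\psi=\psi_0+\varepsilon f_1$, show $\tilde\psi\pm K\varepsilon^2$ are classical sub/supersolutions by power-counting the $O(\varepsilon^2)$ residual near $w_s$ against the (positive) second-derivative term, and close with two applications of Proposition~\ref{comparisonP1}. The only differences from the paper are cosmetic --- you work with the cleared-denominator form $\Lambda$ rather than $F$ and normalize by $\psi_0''$ instead of by $\varepsilon(\tilde\psi')^2+\tilde\psi''$, and you explicitly flag that $\lambda=0$ must be treated separately via the closed-form solution, a point the paper leaves implicit.
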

\begin{proof}
We only give a sketch proof. Let $\tilde{\psi}:=f_0+f_1\varepsilon$. We want to show $\psi(w)=\tilde{\psi}(w)+O(\varepsilon^2)$ uniformly for $w\in(b,w_s)$. Using the formulas for $f_0$ and $f_1$, we can show for $\varepsilon$ sufficiently small,
\begin{equation}\label{approx_eq1}
\varepsilon (\tilde{\psi}')^2(w)+\tilde{\psi}''(w)\geq C_1\left(\frac{c-rw}{c-rb}\right)^{d-2}>0 \quad \forall \ w\in(b,w_s).
\end{equation}
for some positive constant $C_1$ independent of $\varepsilon$ and $w$. Next, we show
\begin{equation}\label{approx_eq2}
F(w,\tilde{\psi}(w),\tilde{\psi}'(w),\tilde{\psi}''(w))=O(\varepsilon^2) \quad\text{uniformly for }w\in(b,w_s).
\end{equation}
In view of \eqref{approx_eq1}, we carry out the optimization over $\pi$ in the expression for $F$. Using equations \eqref{eq_f0} and \eqref{eq_f1}, we obtain
\begin{equation*}
F(w,\tilde{\psi}(w),\tilde{\psi}'(w),\tilde{\psi}''(w))=\frac{D_1(w)\varepsilon^2+D_2(w)\varepsilon^3+D_3(w)\varepsilon^4}{\varepsilon (\tilde{\psi}')^2(w)+\tilde{\psi}''(w)},
\end{equation*}
where
\begin{align*}
D_1(w)&:=2f'_0f'_1[\lambda f_0-(rw-c)f'_0]+[\lambda f_1-(rw-c)f'_1][(f'_0)^2+f''_1]+R(f'_1)^2,\\
D_2(w)&:=2f'_0f'_1[\lambda f_1-(rw-c)f'_1]+(f'_1)^2[\lambda f_0-(rw-c)f'_0],\\
D_3(w)&:=(f'_1)^2[\lambda f_1-(rw-c)f'_1].
\end{align*}
It can be shown using \eqref{approx_eq1} and the explicit formulas for $f_0$, $f_1$ that there exists a positive constant $C_2$ independent of $\varepsilon$ and $w$, such that $|D_i(w)|/[\varepsilon (\tilde{\psi}')^2(w)+\tilde{\psi}''(w)]\leq C_2, i=1,2,3$ for all $w\in(b,w_s)$ and for $\varepsilon$ small enough. This proves \eqref{approx_eq2}. Consequently, we can find a positive constant $C_3$ such that for $\varepsilon$ sufficiently small,
\[F(w,\tilde{\psi}(w)-C_3\varepsilon^2,\tilde{\psi}'(w),\tilde{\psi}''(w))\leq 0 \ \text{ and } \ F(w,\tilde{\psi}(w)+C_3\varepsilon^2,\tilde{\psi}'(w),\tilde{\psi}''(w))\geq 0.\]
By comparison principle for the equation $F=0$, we have $\psi=\tilde{\psi}+O(\varepsilon^2)$.
\end{proof}

\subsection{Should the individual care about robustness?}
Figure \ref{psi} shows robustness has a considerable impact on the minimum probability of ruin. However, this is not really informative as far as investment behavior is concerned. A more important question is: how does the optimal non-robust investment strategy $\pi_0$ perform in the robust market? In other words, will the individual bear significantly more risk if she makes investment decisions as if there were no model uncertainty? The answer to this question is partially affirmative; the individual should care about robustness for non-small $\varepsilon$. In our numerical example, ignoring robustness increases the ruin probability by more than 10\% for $\varepsilon$ larger than 10. On the other hand, for small $\varepsilon$, $\pi_0$ turns out to be a good enough investment strategy. For $\varepsilon=1$, the difference between the ruin probability yielded by $\pi_0$ and the optimal ruin probability $\psi(\cdot \,;1)$ is on a scale of $0.1\%$ which may be negligible for an individual, although the difference between $\psi_0$ and $\psi(\cdot \,;1)$ can be as large as 10\%. Table \ref{tab:table} illustrates  the performance of $\pi_0$ under various levels of ambiguity aversion  in our numerical example.

\newcolumntype{Z}{>{\centering\arraybackslash }X}
\begin{center}
\begin{table}[h]
\caption{Maximum deviation from the minimum robust ruin probability if the individual uses the non-robust strategy $\pi_0$.\label{tab:table}}
\begin{tabularx}{0.8\textwidth}{ |c|Z|Z|Z|Z|Z|Z|Z|}
  \hline
  &$\varepsilon=1$ & $\varepsilon=2$ & $\varepsilon=3$ & $\varepsilon=4$ & $\varepsilon=5$ &  $\varepsilon=10$ & $\varepsilon=20$ \\
  \hline
 $r=0.02$   & 0.001  & 0.005 & 0.013  &  0.025 &  0.038 & 0.105 & 0.201 \\
  \hline
  $r=0.06$  &  0.002 & 0.013 & 0.033  & 0.059 & 0.087 & 0.198 & 0.324 \\ 
  \hline
\end{tabularx}
\end{table}
\end{center}

\pagebreak

\bibliographystyle{plain}
\bibliography{minimumWealth}{}

\end{document}